\newtheorem{thm}{Theorem}[section]
\newtheorem{prop}[thm]{Proposition}
\newtheorem{cor}[thm]{Corollary}
\newtheorem{lemma}[thm]{Lemma}
\newtheorem{preremark}[thm]{Remark}
\newenvironment{remark}{\begin{preremark}\rm}{\medskip \end{preremark}}
\numberwithin{equation}{section}
\newcommand{\R}{\mathbb R}
\newcommand{\T}{\mathbb T}
\newcommand{\N}{\mathbb N}
\newcommand{\one}{\mathds 1}
\newcommand{\eps}{\varepsilon}
\newcommand{\dd} {\; \mathrm{d}}
\newcommand{\vv}{\langle v \rangle}
\DeclareMathOperator{\supp}{supp}
\DeclareMathOperator{\PV}{PV}
\title{Solutions to the non-cutoff Boltzmann equation uniformly near a Maxwellian}
\author{Luis Silvestre}
\address[L.~Silvestre]{Mathematics Department, University of Chicago,
  Chicago, Illinois 60637, USA} \email{luis@math.uchicago.edu}
\author{Stanley Snelson}
\address[S.~Snelson]{Department of Mathematical Sciences, Florida Institute of Technology,
  Melbourne, FL 32901, USA} \email{ssnelson@fit.edu}
\thanks{Luis Silvestre is supported by NSF grants 2054888 and 1764285. Stanley Snelson is supported by a Collaboration Grant from the Simons Foundation.}
\begin{document}

\begin{abstract}
The purpose of this paper is to show how the combination of the well-known results for convergence to equilibrium and conditional regularity, in addition to a short-time existence result, lead to a quick proof of the existence of global smooth solutions for the non cutoff Boltzmann equation when the initial data is close to equilibrium. We include a short-time existence result for polynomially-weighted $L^\infty$ initial data. From this, we deduce that if the initial data is sufficiently close to a Maxwellian in this norm, then a smooth solution exists globally in time.
\end{abstract}
\maketitle

\section{Introduction}
We study the inhomogeneous Boltzmann equation without cutoff:
\begin{equation}\label{e:boltzmann}
f_t + v \cdot \nabla_x f = Q(f,f).
\end{equation}
Here $f : [0,\infty) \times \T^d \times \R^d \to [0,\infty)$ is a nonnegative function that solves the equation. We consider the problem periodic in space. For functions $f,g:\R^d\to\R$, the collision operator is defined by
\[ Q(f,g) = \int_{\R^d} \int_{\mathbb S^{d-1}} B(v-v_*,\sigma) [f(v_*') g(v') - f(v_*) g(v)] \dd \sigma \dd v_*,\]
where $v,v_*$ are post-collisional velocities, and $v', v_*'$ are the pre-collisional velocities given by
\[ v' = \frac {v+v_*} 2 + \frac{|v-v_*|} 2 \sigma, \quad v_*' = \frac {v+v_*} 2 - \frac{|v-v_*|} 2 \sigma.\]
We work with the standard noncutoff collision kernel of the form
\[ B(v-v_*,\sigma) = |v-v_*|^\gamma b(\cos\theta),\]
for some $\gamma > - d$, where $\theta$ is the deviation angle between $v$ and $v'$:
\[ \cos\theta = \frac{v-v_*}{|v-v_*|} \cdot \sigma,\] 
and the angular cross-section $b$ has the asymptotics $b(\cos\theta) \approx \theta^{-(d-1)-2s}$ as $\theta \to 0$, for some $s\in (0,1)$. In this paper, we consider parameters $\gamma$ and $s$ such that $\gamma + 2s\in [0,2]$. Thus, the following inequality summarizes the non-cutoff assumptions on the collision kernel $B$.
\begin{equation} \label{e:collision-kernel-assumptions}
C_b^{-1} |v-v_\ast|^{\gamma} |\sin(\theta/2)|^{-d-2s+1} \one_{\cos \theta > 0} \leq B(v-v_\ast,\cos \theta) \leq C_b |v-v_\ast|^{\gamma} |\sin(\theta/2)|^{-d+1-2s}.
\end{equation}
Recall that we can modify the angular cross-section $b$, provided that $b(\theta) + b(\theta+\pi)$ stays the same, without affecting the collision operator $Q$. It is common to make $b=0$ when $\cos(\theta) < 0$. In \cite{silvestre-boltzmann2016}, the choice $B(v-v_\ast,\cos \theta) \approx |v-v'|^{-d+1-2s} |v-v_\ast'|^{1+2s+\gamma} |v-v_\ast|^{-d-2}$ is proposed to simplify some of the computations.

Let $M : \R^d \to [0,+\infty)$ be the Maxwellian distribution, which is a stationary solution to \eqref{e:boltzmann} that is constant in $t$ and $x$. For concreteness, let us take the usual normalized Maxwellian: $M(v) = (2\pi)^{-d/2} e^{-|v|^2/2}$. The analysis in this paper works around any nonzero Maxwellian.

We consider solutions of the form $f = M+\tilde f$, and $\tilde f$ will be taken to be small. The function $\tilde f$ satisfies the equation
\begin{equation} \label{e:boltzmann-perturbative}
\tilde f_t + v \cdot \nabla_x \tilde f = Q(M+\tilde f, \tilde f) + Q(\tilde f, M).
\end{equation} 
It is natural to choose $M$ with the same mass, momentum and energy as $f$. That is,
\begin{align*}
\iint_{\T^d \times \R^d} \tilde f(t,x,v) \dd v \dd x = 0, \\
\iint_{\T^d \times \R^d} \tilde f(t,x,v) v \dd v \dd x = 0, \\
\iint_{\T^d \times \R^d} \tilde f(t,x,v) |v|^2 \dd v \dd x = 0.
\end{align*}
Since mass, momentum and energy are conserved in time, if these identities hold at time $t=0$, they will hold for all time.

Note that the function $\tilde f$ in \eqref{e:boltzmann-perturbative} may take both positive and negative values. The main results of this paper concern small solutions of the equation \eqref{e:boltzmann-perturbative}. We will omit the tilde in $\tilde f$ from now on, even when we refer to solutions to \eqref{e:boltzmann-perturbative} instead of \eqref{e:boltzmann}. We also introduce the notation $\langle \cdot\rangle = \sqrt{1+|\cdot|^2}$, which will be used throughout the paper.

We split our main result into two main theorems depending on the values of $s$ and $\gamma$. The first one concerns the case of hard potentials $\gamma > 0$. The second one is for moderately soft potentials, which corresponds to those values of $\gamma \leq 0$ so that $\gamma+2s \geq 0$.

\begin{thm} \label{t:main}
Assume $\gamma > 0$ and $\gamma + 2s \in [0,2]$, and let $q > 0$ be sufficiently large, depending on $\gamma$, $s$, and the constant $C_b$ in \eqref{e:collision-kernel-assumptions}. Given any $\eps_0 > 0$, there exists $\eps_1>0$ (presumably much smaller than $\eps_0$) so that if the initial data $f_0 : \T^3 \times \R^3 \to \R$ satisfies
\begin{equation} \label{e:initial_smallness}
 |f_0(x,v)| < \eps_1 \vv^{-q},
 \end{equation} 
then the equation \eqref{e:boltzmann-perturbative} has a global solution $f$, with initial data $f_0$, that satisfies
\begin{equation} \label{e:aim}
|f(t,x,v)| < \eps_0 \vv^{-q},
\end{equation}
for all $t,x,v \in [0,\infty) \times \T^3 \times \R^3$.

This global solution is $C^\infty$ and decays rapidly for large velocities. More precisely, for any multi-index $\alpha$ (involving derivatives in time, space and/or velocity), $k \in \mathbb N$ and $\tau >0$, 
\[
|\vv^k \partial^\alpha f(t,x,v)| \leq C(\alpha,k,\tau) \ \text{ for all } (t,x,v) \in [\tau,+\infty) \times \T^3 \times \R^3.
\]
Here, the upper bound $C(\alpha,k,\tau)$ depends on $\alpha$, $k$, $\tau$, $\gamma$ and $s$ only.
\end{thm}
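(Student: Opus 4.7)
My plan is to glue together three pieces of machinery through a continuation argument: the short-time $L^\infty$-existence theorem proved elsewhere in this paper; the conditional regularity theory for the non-cutoff Boltzmann equation, which promotes pointwise bounds to $C^\infty$ smoothness with arbitrary velocity decay; and the classical near-Maxwellian global well-posedness theory in weighted Sobolev spaces. The overall strategy is a bootstrap: assume the pointwise bound $|f| < \eps_0 \vv^{-q}$ holds up to some time $T^\ast$ and show that in fact a strictly better bound holds on the same interval, forcing $T^\ast = +\infty$.

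First, I choose $\eps_1 \ll \eps_0$ and invoke the short-time existence theorem to produce a solution $f$ on some interval $[0,T_1]$ with $|f(t,x,v)| \leq \tfrac{1}{2}\eps_0 \vv^{-q}$. Under this bound, and provided $\eps_0$ is small enough, $M+f \geq 0$ and has mass, momentum, energy, and entropy quantitatively close to those of $M$; in particular the hydrodynamic hypotheses of the conditional regularity program are satisfied. Hence $f$ is $C^\infty$ with arbitrary polynomial velocity decay at every $t \in (0,T_1]$, and for any $\tau > 0$ the weighted Sobolev norms $\|f(\tau)\|_{H^k_\ell}$ are controlled in terms of the $L^\infty$-weighted bound on $f$; in particular, they are small when $\eps_0$ is small.

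Fix a small $\tau_0 > 0$. At time $\tau_0$, $f(\tau_0)$ lies in a weighted Sobolev space in which the near-Maxwellian theory (Guo, Alexandre--Morimoto--Ukai--Xu--Yang, Gressman--Strain) applies, with arbitrarily small norm. That theory produces a unique global smooth solution on $[\tau_0,\infty)$ starting from $f(\tau_0)$, decaying exponentially to zero in weighted Sobolev norms. By uniqueness of smooth solutions, it coincides on $[\tau_0,T_1]$ with the one built in the previous step, and concatenation gives a global smooth solution on $[0,\infty)$ with the qualitative regularity and rapid decay stated in the theorem.

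The main obstacle is closing the bootstrap on the pointwise inequality $|f(t,x,v)| < \eps_0 \vv^{-q}$: the near-Maxwellian theory provides only weighted Sobolev smallness, while the statement demands polynomially-weighted $L^\infty$ smallness. I would bridge this gap by a short-window argument. On each interval $[t-\tau_0, t]$ with $t \geq \tau_0$, apply the conditional regularity estimate (which is quantitative in the $L^\infty$-weighted bound on $f$ over the window) together with Sobolev embedding to deduce a pointwise bound of the form $|f(t,x,v)| \leq C\,\|f\|_{L^2([t-\tau_0,t];\,H^k_\ell)}\,\vv^{-q}$; the exponential Sobolev decay then pushes the pointwise bound below $\tfrac{1}{2}\eps_0 \vv^{-q}$, contradicting maximality of $T^\ast$. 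The delicate bookkeeping is matching the polynomial weight $q$ required in the conclusion against the moments available in $H^k_\ell$; the hard-potential assumption $\gamma > 0$ should help here by providing stronger large-velocity dissipation and ample moment production.
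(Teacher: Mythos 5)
Your proposal correctly identifies two of the three ingredients the paper uses (short-time existence and conditional regularity), but the third ingredient you choose is genuinely different from the paper's and this difference is consequential. The paper's third ingredient is the Desvillettes--Villani trend-to-equilibrium theorem (Theorem~\ref{t:global-attractor}), a \emph{conditional} statement applying to arbitrary solutions satisfying uniform regularity and lower-bound hypotheses. Because conditional regularity and the Maxwellian lower bounds supply exactly those hypotheses, the paper can conclude that if the first crossing time $t_0$ of the barrier $\eps_0\vv^{-q}$ exists then it must be smaller than $C_1/\eps_0$, after which a single application of the short-time result with a large fixed $T$ closes the argument. You instead invoke the classical near-Maxwellian global theory (Guo, AMUXY, Gressman--Strain) as a black box. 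This runs against the paper's explicitly stated goal of \emph{not} reusing those arguments, but more importantly it introduces two concrete gaps.

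First, the theories you cite all set $f=M+\sqrt{M}g$ and require $g$ small in a Sobolev space with Gaussian-in-$v$ weight, i.e.\ they need $f(\tau_0)/\sqrt{M}$ to be in $H^N$. Conditional regularity (Theorem~\ref{t:reg}) produces only \emph{polynomial} decay in $v$ of arbitrary order, which is not enough to make $f(\tau_0)/\sqrt{M}$ even bounded. You would need either a separate Gaussian moment-generation result for inhomogeneous hard potentials (not available in the paper's toolbox) or a polynomially-weighted version of the near-equilibrium theory such as \cite{alonso2020giorgi} or \cite{herau2020regularization}, which the paper discusses precisely as the prior art that Theorem~\ref{t:main} is meant to re-derive. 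Second, the step ``by uniqueness of smooth solutions, it coincides with the short-time solution'' is unjustified: the paper explicitly does not establish uniqueness (see the remark ``We do not address the uniqueness of solutions in this paper''), and the uniqueness in the classical near-Maxwellian theory lives in a function class (Gaussian-weighted Sobolev) that does not obviously contain the short-time solution you built. Finally, the interpolation inequality $|f(t,x,v)|\leq C\|f\|_{L^2([t-\tau_0,t];H^k_\ell)}\vv^{-q}$ in your short-window argument is misstated; the relevant implication is a fixed-time Sobolev embedding from $H^k_\ell(\T^3\times\R^3)$ to $L^\infty_q$ (as noted in the paper's remark after Theorem~\ref{t:main2}), not an $L^2$-in-time estimate. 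The paper sidesteps all of this by using Desvillettes--Villani, whose hypotheses are exactly the outputs of the conditional regularity program, so no change of function space or uniqueness argument is needed.
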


\begin{thm} \label{t:main2}
Let $\gamma \leq 0$, and assume $\gamma + 2s \in [0,2]$. There exists a sufficiently large exponent $q_0$ (depending on $\gamma$, $s$, and $C_b$) so that the following statement is true: Given $\eps_0>0$, and the sequence of numbers $N_q$ for $q=1,2,3,\dots$ there exists $\eps_1>0$, such that for any initial data $f_0:\T^3\times\R^3\to \R$ satisfying, for all $q>0$,  
\begin{equation}
 |f_0(x,v)| < N_q \vv^{-q}, 
 \end{equation}
and also
\begin{equation}
 |f_0(x,v)| < \eps_1 \vv^{-q_0}, 
 \end{equation}
then \eqref{e:boltzmann-perturbative} has a global solution $f$ with initial data $f_0$, that satisfies for some family of constants $N_q'$,
\begin{equation}
|f(t,x,v)| < N_q' \vv^{-q},
\end{equation}
with some $N_{q_0}' < \eps_0$.

Here $\eps_1$ depends on $\gamma$, $s$, $\eps_0$, and the numbers $N_q$. The value of $q_0$ depends on $\gamma$ and $s$ only.

This global solution is $C^\infty$ and decays rapidly for large velocities. More precisely, for any multi-index $\alpha$ (involving derivatives in time, space and/or velocity), $k \in \mathbb N$ and $\tau >0$, 
\[
|\vv^k \partial^\alpha f(t,x,v)| \leq C(\alpha,k,\tau) \ \text{ for all } (t,x,v) \in [\tau,+\infty) \times \T^3 \times \R^3.
\]
Here, the upper bound $C(\alpha,k,\tau)$ depends on $\alpha$, $k$, $\tau$, $\gamma$, $s$ and the initial upper bounds $N_q$.
\end{thm}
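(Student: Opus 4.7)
The plan is to follow the same continuation scheme used for Theorem \ref{t:main}, but with additional care taken to propagate polynomial moments, which for $\gamma \leq 0$ do not self-improve. Define
\[
T^* := \sup\bigl\{ T \geq 0 \,:\, \text{a solution exists on } [0,T] \text{ with } |f(t,x,v)| \leq \eps_0 \vv^{-q_0} \bigr\},
\]
and aim to rule out the case $T^* < +\infty$.

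First, I would invoke the paper's short-time existence theorem in polynomially-weighted $L^\infty$ to produce a local solution on some $[0, T_1]$ satisfying $|f| \leq 2\eps_1 \vv^{-q_0}$, well inside the target bound. On $[T_1, T^*]$, the $L^\infty$ smallness combined with the background Maxwellian ensures that the hydrodynamic fields of $M+f$ stay close to those of $M$, so the conditional $C^\infty$ regularity theory applies and yields quantitative derivative bounds depending only on a lower bound for $t$. The genuinely new step compared with Theorem \ref{t:main} is the \emph{uniform-in-time} propagation of the higher polynomial moments: from $|f_0| \leq N_q \vv^{-q}$ I need to produce bounds $|f(t)| \leq N_q' \vv^{-q}$ on $[0, T^*]$ with constants $N_q'$ independent of $T^*$. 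I expect this to follow from a pointwise barrier argument for \eqref{e:boltzmann-perturbative}, using the coercive/diffusive structure of $Q(M,\cdot)$ at high velocities together with the smallness of $f$ in the $\vv^{-q_0}$ norm and the lower-order moment bounds.

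With uniform smoothness and all moments controlled, I would apply a quantitative convergence-to-equilibrium result (of Desvillettes--Villani or Gualdani--Mischler--Mouhot type) to conclude that $f(t) \to 0$ in a norm strong enough to give $|f(t,x,v)| \leq \eps_0 \vv^{-q_0}/2$ for $t \geq T_0$, where $T_0$ depends on the sequence $(N_q)$ but not on $\eps_1$. On the bridge interval $[0, T_0]$ the solution stays of size $O(\eps_1)$ in the weighted $L^\infty$ norm by a short-time/Gr\"onwall argument using the smoothness on $[T_1, T_0]$; choosing $\eps_1$ small enough that this size lies below $\eps_0/2$ closes the bootstrap, yielding the strict inequality $|f(t,x,v)| < \eps_0 \vv^{-q_0}$ on $[0, T^*]$ and contradicting the maximality of $T^*$.

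The main obstacle is the moment-propagation step. For hard potentials, moments are produced instantaneously and self-improve, so a single exponent $q$ is enough to close everything; for soft potentials each moment must be transported from the initial data, and naive Gr\"onwall estimates give constants that blow up with $T^*$. The proof must combine a pointwise upper-barrier argument, exploiting the fractional-diffusion character of the linearized operator $Q(M,\cdot)$, with the hydrodynamic smallness coming from $|f| \leq \eps_0 \vv^{-q_0}$, in order to produce $N_q'$ that are uniform in $T^*$. One must furthermore show that the dependence of $N_{q_0}'$ on $\eps_1$ is linear (or at least vanishing as $\eps_1 \to 0$), so that $N_{q_0}' < \eps_0$ can be guaranteed. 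This quantitative dependence is exactly what forces the two-parameter smallness structure in the statement of the theorem: one parameter controlling the full sequence $(N_q)$ and a separate, smaller parameter $\eps_1$ controlling the decisive exponent $q_0$.
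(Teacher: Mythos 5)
Your proposal is essentially the scheme the paper uses for Theorems \ref{t:main} and \ref{t:main2}: combine short-time existence, conditional regularity, and trend to equilibrium, and run a continuation/contradiction argument on the first time the bound $|f|<\eps_0\vv^{-q_0}$ could fail. The one thing to note is that the ``genuinely new'' moment-propagation step you isolate is already supplied by Theorem~\ref{t:reg} (whose constants for $\gamma\leq 0$ depend on the initial decay bounds $N_r$ but are uniform in the existence time), so no additional barrier argument is required beyond appealing to that theorem.
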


The following Corollary of Theorem \ref{t:main2} is perhaps easier to comprehend.
\begin{cor}\label{c:main2}
Let $\gamma \leq 0$, and assume $\gamma + 2s \in [0,2]$. Let $\varphi: [0,\infty) \to [0,\infty)$ be a function so that $\varphi(r)/r^k \to 0$ as $r \to \infty$ for every $k > 0$. For any $\eps_0>0$ and $q \geq 0$, there exists $\eps_1>0$, such that for any initial data $f_0:\T^3\times\R^3\to\R$ satisfying 
\begin{equation}
 |f_0(x,v)| < \eps_1 \varphi(|v|)
 \end{equation}
then \eqref{e:boltzmann-perturbative} has a global smooth solution $f$, rapidly decaying for $|v|\to \infty$, with initial data $f_0$, that satisfies
\[ |f(t,x,v)| < \eps_0 \vv^{-q}, \]
for all $t,x,v \in [0,\infty) \times \T^3 \times \R^3$.
\end{cor}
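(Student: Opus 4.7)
The plan is to deduce the corollary from Theorem \ref{t:main2} by translating the rapidly decaying envelope $\varphi$ into a sequence of polynomial bounds, and then using a geometric interpolation to promote smallness at the single weight $\vv^{-q_0}$ to smallness at the requested weight $\vv^{-q}$. First, define $M_k := \sup_{r \geq 0} \langle r \rangle^k \varphi(r)$, which is finite for every $k \geq 0$ since $\varphi$ decays faster than any polynomial. Insisting that $\eps_1 \leq 1$, the hypothesis $|f_0(x,v)| < \eps_1 \varphi(|v|)$ automatically gives $|f_0| < M_k \vv^{-k}$ for all $k$, so the sequence $N_k := M_k$ is an admissible input for Theorem \ref{t:main2}.

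Pick an auxiliary smallness parameter $\tilde\eps_0 > 0$, to be selected later. Theorem \ref{t:main2} applied with the sequence $\{M_k\}$ and with target $\tilde\eps_0$ delivers a threshold $\tilde\eps_1 > 0$. Set $\eps_1 := \min(1,\, \tilde\eps_1/M_{q_0})$, so that $|f_0| < \eps_1 \varphi(|v|)$ implies both $|f_0| < M_k \vv^{-k}$ for each $k$ and $|f_0| < \tilde\eps_1 \vv^{-q_0}$. Theorem \ref{t:main2} then yields a global, smooth, rapidly decaying solution $f$ with $|f(t,x,v)| < N_k' \vv^{-k}$ for every $k$ and $N_{q_0}' < \tilde\eps_0$. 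If $q \leq q_0$ we are already done, since $\vv \geq 1$ gives $\tilde\eps_0 \vv^{-q_0} \leq \tilde\eps_0 \vv^{-q}$ and we simply take $\tilde\eps_0 := \eps_0$.

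For the main case $q > q_0$, set $\theta := q/(2q - q_0) \in (0,1)$, so that $q_0 \theta + 2q(1-\theta) = q$, and interpolate between the bounds $|f| < \tilde\eps_0 \vv^{-q_0}$ and $|f| < N_{2q}' \vv^{-2q}$:
\[
|f(t,x,v)| \;\leq\; \bigl(\tilde\eps_0 \vv^{-q_0}\bigr)^{\theta} \, \bigl(N_{2q}' \vv^{-2q}\bigr)^{1-\theta} \;=\; \tilde\eps_0^{\theta}\,(N_{2q}')^{1-\theta}\,\vv^{-q}.
\]
Then choose $\tilde\eps_0$ small enough that $\tilde\eps_0^{\theta}(N_{2q}')^{1-\theta} < \eps_0$, and feed this $\tilde\eps_0$ back into the procedure of the previous paragraph. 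The one delicate point, which is the main potential obstacle, is that $\tilde\eps_0$ is being selected using $N_{2q}'$, while $N_{2q}'$ is itself produced by Theorem \ref{t:main2} with $\tilde\eps_0$ as input. This is consistent provided $N_{2q}'$ depends only on the input sequence $\{M_k\}$ and on $\gamma, s$ (as the dependence asserted in Theorem \ref{t:main2} indicates) and not on the auxiliary smallness level $\tilde\eps_0$; with this observation in hand, the construction closes and the corollary follows.
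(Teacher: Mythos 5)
The paper does not actually give a proof of this corollary, so there is no official argument to compare against; but your reduction is exactly the natural one and it is sound. Treating the rapidly decaying envelope $\varphi$ as the sequence of polynomial bounds $M_k = \sup_r \langle r \rangle^k \varphi(r)$ feeds Theorem \ref{t:main2} correctly, and for $q > q_0$ the geometric interpolation between the small bound at weight $q_0$ and the (merely bounded) bound at weight $2q$ is precisely the mechanism the authors allude to in the Remark following Theorem \ref{t:main2}, where they interpolate regularity estimates against the smallness at a single weight to propagate smallness to stronger norms.

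You have correctly isolated the one genuinely delicate point: the choice of $\tilde\eps_0$ depends on $N_{2q}'$, which is produced by the very theorem whose smallness input is $\tilde\eps_0$. Your resolution is correct, but your justification (``as the dependence asserted in Theorem \ref{t:main2} indicates'') overstates what the theorem literally says --- the statement of Theorem \ref{t:main2} only records the dependence of $\eps_1$, $q_0$, and the regularity constants $C(\alpha,k,\tau)$, not of the decay constants $N_q'$. To close that gap one should trace through the proof: for $t$ in a short initial layer $[0,\tau]$, the decay bound comes from the second part of Proposition \ref{p:short_time} and depends only on $M_{\tilde q}$, $\tilde q$, and the parameters of the equation; for $t \geq \tau$ it comes from the conditional regularity estimate of Theorem \ref{t:reg}, whose constants $C_{k,q,\tau}$ depend (for $\gamma \leq 0$) only on the initial decay bounds $N_r$, which are bounded by $M_r$ once $\eps_1 \leq 1$, together with $\gamma$, $s$, and a fixed $\tau$. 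Neither source involves the smallness level $\tilde\eps_0$, so $N_{2q}'$ may indeed be read off before $\tilde\eps_0$ is chosen, and the construction is not circular. With that clarification, the argument is complete.
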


The first results of global existance of solutions to the non-cutoff Boltzmann equation for initial data near equilibrium were given independently in \cite{gressmanstrain2011} and \cite{amuxy2011_hard,amuxy2012_soft}. A key development leading to the result in \cite{gressmanstrain2011} is an anisotropic distance and sharp coercivity estimates that capture the right asymptotics for large velocities. This coercivity is with respect to an anisotropic fractional Sobolev norm (see \eqref{e:Nsgamma-Hs}) that also plays a role in the analysis of solutions that are not necessarily close to equilibrium (see \cite{gressman-strain2011sharp}).  More recently, other global existence results have been obtained measuring the closedness between the initial data and a Maxwellian with different norms \cite{duan2019global,herau2020regularization,alonso2018non,alonso2020giorgi,zhang2020global}. In \cite{herau2020regularization} and \cite{alonso2018non}, the authors use Sobolev norms with polynomial weights. A consequence of the result of \cite{herau2020regularization} is the improvement of the decay rate to equilibrium from \cite{desvillettes2005global}. In \cite{alonso2020giorgi}, the authors study a small perturbation of the Maxwellian in $L^\infty$, with a polynomially decaying weight. The analysis is based on an $L^\infty$ estimate by De Giorgi iteration. We also point out an earlier result in this direction for the Landau equation in \cite{kim2020Landau}.

Given these precedent results, one can argue that Theorems \ref{t:main} and \ref{t:main2} in this paper are not too surprising. In particular, the result in \cite{alonso2020giorgi} contains Theorem \ref{t:main} in the case of $\gamma \in [0,1]$. Theorem \ref{t:main2} appears to be new, extending the result of \cite{alonso2020giorgi} to moderately soft potentials. Our main motivation for this work is not so much to come up with a better result than the ones in the literature concerning global existence of smooth solutions for initial data near a Maxwellian, but to show how this type of result can be quickly derived from the combination of the following three ingredients:
\begin{enumerate}
	\item \textbf{The convergence to equilibrium}. In a celebrated result by Desvillettes and Villani \cite{desvillettes2005global}, it is proved that solutions converge to the Maxwellian as $t \to \infty$, conditional to uniform regularity estimates and a uniform lower bound by a fixed Maxwellian.
	\item \textbf{Conditional regularity}. In a series of recent works \cite{silvestre-boltzmann2016,imbert-silvestre-whi2020,imbert2018schauder,imbert-mouhot-silvestre-lowerbound2020,imbert-mouhot-silvestre-decay2020,imbert2019global,imbert-silvestre-survey2020}, global regularity estimates and lower bounds are obtained conditional only to certain macroscopic bounds.
	\item \textbf{A short-time existence result}. Here, the time of existence of the solution should depend on some distance between the initial data and a Maxwellian.
\end{enumerate}

The first two items in this list, the convergence to equilibrium and the conditional regularity estimates, apply to arbitrary solutions away from equilibrium. Neither of these results was meant to be applied to the near-equilibrium regime. However, if we do apply them to solutions that are near a Maxwellian, then their conditional assumptions are automatically satisfied, and they simplify considerably the problem of establishing the global existence of solutions. Indeed, the proof in this paper is quite short, at the expense of applying these two elaborate theorems from the literature.

Even though there are several documented results about global solutions for initial data near equilibrium, there are suprisingly few explicit results about the short-time existence for arbitrary initial data. The first such result that we know of is in \cite{amuxy2011qualitative}. It requires the initial data to have Gaussian decay for large velocities, which makes it difficult to apply in practice. In \cite{morimoto2015polynomial} and \cite{henderson2020polynomial}, the authors obtain a short-time existence result for initial data in $H^6$ and $H^5$ respectively, with a polynomial weight, in the case of soft potentials: $\gamma \leq 0$.

In this paper, we include a proof of short-time existence for solutions in Section \ref{s:short-time}. This proof comprises the bulk of this paper. Once we have the three ingredients mentioned above, proving Theorems \ref{t:main} and \ref{t:main2} becomes practically trivial. Using the result of \cite{henderson2020polynomial}, we could have shortened the document considerably, but we wanted to be able to address the case of hard potentials, $\gamma > 0$, as well. The proof for short-time existence we include here is relatively minimalistic, and it is not meant to be applied to initial data that is far from equilibrium. There is certainly room (and, arguably, need) for further research into short-time existence results for the Boltzmann equation.

\begin{remark}
The regularity estimates for the solution $f$ in Theorems \ref{t:main} and \ref{t:main2} are a direct consequence of the result in \cite{imbert2019global} (see Theorem \ref{t:reg} below). The solutions are uniformly smooth for $t \in [\tau,+\infty)$, for any $\tau > 0$. Interpolating these regularity estimates with the upper bound $|f| \leq \eps_0 \vv^{-q}$ one can directly deduce that, by picking $\eps_0$ small, the norm $\|f(t,\cdot,\cdot)\|_{W^{k,p}_m(\T^3 \times \R^3)}$ can be made arbitrarily small, uniformly for $t \in [\tau,+\infty)$, for any values of $k \geq 0$, $p \in [1,\infty]$, and $m \geq 0$. Thus, the conditional regularity estimates allow us to transfer our initial smallness condition with respect to the norm $L^\infty_q$ into stronger norms.
\end{remark}

\subsection{Strategy for the proof}

The proofs of Theorems \ref{t:main} and \ref{t:main2} follow quickly by combining the trend to equilibrium, conditional regularity, and short-time existence.

Any short-time existence result gives us a solution in an interval of time $[0,T]$, for some $T$ depending on the initial data. Since any Maxwellian is a stationary solution, it is natural to estimate the time $T$ depending on some kind of distance between $f_0$ and a given Maxwellian $M$. Depending on how we set up our short-time existence result, we may utilize different norms for $\|f_0 - M\|$. The proof we provide in Section \ref{s:short-time} uses a polynomially weighted $L^\infty$ norm. We made this choice of norm so that our main theorems match and extend one of the latest (and arguably strongest) results in the current literature: \cite{alonso2020giorgi}.

There is one condition that the norm used in the short-time existence result must satisfy for our proof to work. The smallness of $\|f_0 - M\|$ must imply the hydrodynamic bounds that are required for the conditional regularity result (given below in \eqref{e:hydro}). It is hard to imagine that this would ever be a problem. Every result in the current literature for solutions near a Maxwellian imposes a smallness condition that is stronger than the hydrodynamic bounds in \eqref{e:hydro}.

The main idea for the proofs of Theorem \ref{t:main} and \ref{t:main2} is the following. We know that there is a solution for certain amount of time. Before this solution ceases to exist, it will first invalidate the inequality $\|f(t,\cdot) - M\| < \eps_0$ at certain time $T_0$. From the conditional regularity results, we know that the solution is smooth and bounded below by a Maxwellian in $[\tau,T_0]$, for any given $\tau>0$. These estimates do not depend on $T_0$. The trend to equilibrium result tells us that $\|f(T_0,\cdot) - M\| < \eps_0$ for sure if $T_0$ is too large, leading to a contradiction. The proof finishes immediately by picking $\eps_1$ small enough so that the short-time existence result ensures the solution exists for a long enough interval of time.

The fact that in our Theorems \ref{t:main} and \ref{t:main2}, we measure the closeness between the initial data $f_0$ and the Maxwellian $M$ with respect to a weighted $L^\infty$ norm depends exclusively on the type of short-time existence result we use. A different short-time existence result, with different conditions on the initial data, would automatically lead to a different global existence result, for a different way to measure the distance between $f_0$ and $M$. The method presented in this paper reduces any future attempt to prove the existence of global solutions near equilibrium, to establishig a new short-time existence result.


We do not address the uniqueness of solutions in this paper. Uniqueness is a local property. It is something that depends exclusively on the short-time result as well. We prove local existence in $L_q^\infty$, but not uniqueness, in Section \ref{s:short-time}.

We should point out that most previous works concerning the global well posedness of the Boltzmann equation near equilibrium contain some form of short-time existence somewhere inside their proofs, in one way or another. With the approach we suggest in this paper, we cannot get around that part of the proof. But we save ourselves from redoing anything else.

\subsection{Further results on global regularity near equilibrium}

One advantage of having a quick proof of global existence near equilibrium as a consequence of the three ingredients mentioned above is that new results concerning convergence to equilibrium, conditional regularity, or short-time existence, would automatically translate into new results on global existence.

Any result on short-time existence of (smooth) solutions whose time of existence depends on some distance between the inital data and a Maxwellian, would immediately imply a global existence result when that distance is sufficiently small. Thus, if one wanted to extend Theorems \ref{t:main} and \ref{t:main2} to norms other than polynomially-weighted $L^\infty$, we would need to develop a suitable short-time existence result only. One caveat is on the decay of the solutions as $|v| \to \infty$. In this paper we use techniques from \cite{imbert-mouhot-silvestre-decay2020} to propagate polynomially decaying upper bounds. Other rates of decay would require different bounds.

The reason we require $\gamma+2s \in [0,2]$ in Theorems \ref{t:main} and \ref{t:main2} is because of the same requirement in the conditional regularity result from \cite{imbert2019global}. The restriction $\gamma+2s \in [0,2]$ plays a strong role in establishing the $L^\infty$ estimates in \cite{silvestre-boltzmann2016}, that are applied in \cite{imbert2019global}. The solutions we work with in this paper are bounded by construction, so it seems that the assumption $\gamma+2s \geq 0$ should be unnecessary. Presumably, one should be able to establish $C^\infty$ estimates similar to \cite{imbert2019global}, in the full range of values of $\gamma$ and $s$, if we add the condition that $f \in L^\infty$ in addition to \eqref{e:hydro}. We are not aware of any result for $\gamma+2s < 0$ in this direction. See \cite{cameron2020velocity} for a result for $\gamma+2s > 2$.

To be more precise, it is conceivable that Theorem \ref{t:reg} can be extended to other values of $\gamma$ and $s$ if the regularity estimates are allowed to depend on $\|f\|_{L^\infty}$ in addition to \eqref{e:hydro}. Such a conditional regularity result could be used to extend the result of Theorems \ref{t:main} and \ref{t:main2} to the same values of $\gamma$ and $s$. The short-time existence result we present in Section \ref{s:short-time} also uses the restriction $\gamma+2s \geq 0$ indirectly through the application of the result in \cite{he2012homogeneous-boltzmann} for the construction of solutions. Note that, for $0>\gamma>\max(-3,-3/2-2s)$, we can use the result in \cite{henderson2020polynomial} to construct the solutions for a short time.

The reason we state Theorems \ref{t:main} and \ref{t:main2} in three dimensions is because we reference a result from 
 \cite{he2012homogeneous-boltzmann} that is stated in three dimensions in the original paper. There seems to be no fundamental difficulty in extending this result, and therefore also the analysis in this paper, to higher dimensions. Naturally, the number of derivatives in Proposition \ref{p:short_time} would change from $4$ to a dimensional-dependent number.

\subsection{Notation}

We denote polynomially-weighted $L^p$ and Sobolev norms by
\[ \|u\|_{L^p_q(\T^d\times\R^d)} := \|\vv^q u\|_{L^p(\T^d\times\R^d)}, \quad \|u\|_{H^k_q(\T^d\times \R^d)} := \|\vv^q u\|_{H^k(\T^d\times\R^d)}, \quad k\in \mathbb N.\]
For functions of the $v$ variable only, the norms $\|u\|_{L^p_q(\R^d)}$ and $\|u\|_{H^k_q(\R^d)}$ are defined analogously. For $s\in (0,1)$ and $[u]_{H^s(\R^d)}$ the standard fractional Sobolev seminorm defined by
\[ [u]_{H^s(\R^d)}^2 = \int_{\R^d}\int_{\R^d} \frac{|u(v) - u(w)|^2}{|v-w|^{d+2s}} \dd w \dd v ,\] 
we define
 \[ \|u\|_{H^{k+s}_q(\R^d)} := \|u\|_{H^k_q(\R^d)} + \sum_{|\alpha|=k} [\vv^q \partial^\alpha u]_{H^s(\R^d)}. \]

\section{Preliminaries}

\subsection{The initial data}
\label{s:weak_solutions}

We work with classical smooth solutions in this paper. The solutions we construct are $C^\infty$ with respect to all variables ($t$, $x$ and $v$) for any positive time. They also decay as $|v| \to \infty$ faster than any algebraic rate. However, this smoothness is not uniform up to the initial time, unless $f_0$ is smooth.

Since we allow initial data $f_0$ that is in a weighted $L^\infty$ space, our solutions may have a discontinuity at time $t=0$. One way to make sense of the initial data is to use the weak formulation of the equation up to $t=0$. We insist that for any smooth test function $\varphi(t,x,v)$ with compact support in $[0,T)\times \T^3\times\R^3$, the following equality holds:
\begin{equation}\label{e:initial-data}
 \iint_{\T^3\times\R^3} f_0(x,v) \varphi(0,x,v) \dd x \dd v = \iiint_{[0,T]\times\T^3\times\R^3} \left[ f(\partial_t + v\cdot \nabla_x )\varphi + \varphi Q(M+f,M+f) \right] \dd v \dd x \dd t.
 \end{equation}
It is not a priori obvious that the last term on the right is well-defined, as the bounds necessary to make pointwise sense of $Q(M+f,M+f)$ may degenerate as $t\to 0$. However, there is a weak form of the collision operator that allows one to make sense of the integral using the smoothness of $\varphi$: with the notations $\varphi = \varphi(t,x,v)$, $\varphi_* = \varphi(t,x,v_*)$, $\varphi' = \varphi(t,x,v')$, $\varphi_*' = \varphi(t,x,v_*')$, and similarly for $f$, one has
\begin{equation}\label{e:weak-collision}
\begin{split} \int_{\R^3} &\varphi  Q(M+f,M+f) \dd v\\
& = \frac 1 2 \int_{\R^3}\iint_{\R^3\times \mathbb S^2}(M(v)+f)(M(v_*)+ f_*) B(v-v_*,\sigma) (\varphi_*' +\varphi'- \varphi_* - \varphi) \dd \sigma \dd v_* \dd v.
\end{split}
 \end{equation}
This weak formulation follows from well-known formal computations (see \cite[Chapter 1, Section 2.3]{villani_review}) that are valid in our setting because $f(t,x,\cdot)$ is Schwartz class for all $t>0$ and $x\in \T^3$. We also have the following estimate (see \cite[Chapter 2, formula (112)]{villani_review}) that ensures the integral in $v$ is well-defined as long as $\varphi \in W^{2,\infty}_v$ and $f\in (L^1_{\gamma_++2})_v$:
\begin{lemma}\label{l:phiQgh}
For $\varphi, g, h: \R^d\to \R$ such that the right-hand side is finite, one has
\begin{equation*}
 \int_{\R^d} \varphi Q(g,h) \dd v \lesssim \|\varphi\|_{W^{2,\infty}(\R^d)} \iint_{\R^d\times\R^d}g(v_*) h(v) |v-v_*|^{1+\gamma} \langle v-v_* \rangle \dd v_* \dd v.
 \end{equation*}
\end{lemma}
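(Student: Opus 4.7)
The plan is to follow Villani's classical derivation of the weak form of the collision operator and then bound the resulting angular integrals using the non-cutoff asymptotics of $b$. First, I would apply the pre-post change of variables $(v, v_*, \sigma) \mapsto (v', v_*', \sigma')$, which has unit Jacobian and preserves $B$ (since $|v-v_*|=|v'-v_*'|$ and the angle is preserved), to rewrite
\[
\int_{\R^d} \varphi\, Q(g,h) \dd v \;=\; \iiint g(v_*) h(v)\, B(v-v_*, \sigma)\, [\varphi(v') - \varphi(v)] \dd \sigma \dd v_* \dd v.
\]
Because $\int_{\mathbb{S}^{d-1}} B \dd\sigma$ diverges in the non-cutoff regime, this identity is only conditionally convergent, so all subsequent estimates have to exploit the cancellation built into $\varphi(v') - \varphi(v)$.

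Next, I would split $\varphi(v') - \varphi(v) = \nabla\varphi(v) \cdot (v'-v) + R_2$, where the second-order remainder satisfies $|R_2| \leq \tfrac12 \|D^2\varphi\|_\infty |v'-v|^2$. The quadratic piece is straightforward: combining $|v'-v| = |v-v_*|\sin(\theta/2)$ with the upper bound in \eqref{e:collision-kernel-assumptions} gives
\[
\int_{\mathbb{S}^{d-1}} B(v-v_*,\sigma)\, |v'-v|^2 \dd\sigma \;\lesssim\; |v-v_*|^{\gamma+2},
\]
since the angular integral $\int_0^\pi \sin(\theta/2)^{1-2s}\dd\theta$ is finite for $s \in (0,1)$.

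The linear piece is the main obstacle: $\int_{\mathbb{S}^{d-1}} B \dd\sigma = \infty$, yet the vector integral $\int_{\mathbb{S}^{d-1}} B\,(v'-v)\dd\sigma$ must be shown to be finite by cancellation. Decomposing $v'-v = \tfrac{v_*-v}{2} + \tfrac{|v-v_*|}{2}\sigma$ and invoking the axial symmetry of $B$ about $k := (v-v_*)/|v-v_*|$, only the component of $\sigma$ along $k$ survives the spherical integration, and a short computation yields
\[
\int_{\mathbb{S}^{d-1}} B(v-v_*,\sigma)\,(v'-v) \dd\sigma \;=\; -\,c_b\,(v-v_*)\,|v-v_*|^\gamma,
\]
where $c_b$ is proportional to $\int_0^\pi b(\cos\theta)(1 - \cos\theta)(\sin\theta)^{d-2}\dd\theta$. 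This integral is finite because the integrand behaves like $\theta^{1-2s}$ near $\theta=0$, again using $s<1$. Multiplying by $\nabla\varphi(v)$ bounds the linear contribution by $\|\nabla\varphi\|_\infty |v-v_*|^{1+\gamma}$.

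Combining both contributions and integrating against $g(v_*) h(v)$ produces the bound
\[
\Bigl|\int \varphi\, Q(g,h) \dd v\Bigr| \;\lesssim\; \|\varphi\|_{W^{2,\infty}} \iint |g(v_*)|\,|h(v)|\,\bigl(|v-v_*|^{1+\gamma} + |v-v_*|^{2+\gamma}\bigr) \dd v_* \dd v,
\]
which collapses to the claimed estimate since $1 + |v-v_*| \lesssim \langle v-v_*\rangle$. The delicate step throughout is the cancellation argument giving finiteness of $\int B (v'-v)\dd\sigma$; the rest reduces to standard spherical-integral bookkeeping.
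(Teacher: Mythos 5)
Your proof is correct and is exactly the classical argument behind this estimate: the paper does not reprove Lemma~\ref{l:phiQgh} but simply cites Villani's review (Chapter~2, formula~(112)), and your reconstruction---the pre/post-collisional change of variables, the second-order Taylor expansion of $\varphi(v')-\varphi(v)$, and the cancellation of the first-order term via the axial symmetry of $B$ about $(v-v_*)/|v-v_*|$, leading to the integrable weight $\int b(\cos\theta)(1-\cos\theta)(\sin\theta)^{d-2}\dd\theta$---is precisely the argument that citation refers to. The only cosmetic remark is that the lemma's right-hand side should carry $|g(v_*)|\,|h(v)|$ (or assume $g,h\geq 0$), which you implicitly and correctly did.
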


Since the test functions $\varphi$ in \eqref{e:initial-data} are smooth up to $t=0$, our polynomially-weighted $L^\infty$ bounds on $f$ ensure that the formula \eqref{e:initial-data} makes sense.



\subsection{Conditional regularity}

The conditional regularity of the Boltzmann equation is a collection of regularity estimates that are based on the assumption that we have a classical solution $f$ to \eqref{e:boltzmann} whose macroscopic hydrodynamic quantities satisfy the following pointwise bounds.
\begin{equation}\label{e:hydro}
\begin{split}
0<m_0\leq \int_{\R^d} f(t,x,v)\dd v \leq M_0,\\
\int_{\R^d} |v|^2 f(t,x,v)\dd v \leq E_0,\\
\int_{\R^d} f(t,x,v) \log f(t,x,v) \dd v \leq H_0.
\end{split}
\end{equation}

The inequalities in \eqref{e:hydro} concern the mass, energy and entropy densities respectively, at every point $(t,x)$. We do not know of any reason why the inequalities \eqref{e:hydro} should hold for general solutions. By analogy with the compressible Euler and Navier Stokes systems, it would make sense to expect that there may exist some solutions where \eqref{e:hydro} does not hold. The conditional regularity result tells us that this is the only way in which the Boltzmann equation can possibly develop a singularity.

The main result of \cite{imbert2019global} provides a priori estimates for derivatives of any order, depending only on the conditions \eqref{e:hydro}.

\begin{thm}\label{t:reg}
Let $f$ be a classical smooth solution to \eqref{e:boltzmann} on $[0,T]\times \T^d\times \R^d$ which decays faster than any algebraic rate as $|v| \to \infty$. Assume that \eqref{e:hydro} holds. Then, for any multi-index $k \in \mathbb N^{1+2d}$, $q>0$, and $\tau\in (0,T)$,
\begin{equation}\label{e:reg-estimates}
\|\vv^q D^k f\|_{L^\infty((\tau,T]\times \T^d\times \R^d)} \leq C_{k,q,\tau}.
\end{equation}

When $\gamma > 0$, the constants $C_{k,q,\tau}$ depend only on $k$, $q$, $\tau$, $s$, $\gamma$, $d$, and the constants in \eqref{e:hydro}. When $\gamma \leq 0$, the constants $C_{k,q,\tau}$ depend additionally on polynomial decay estimates for $f_0$, i.e. on the constants
\[ N_r := \sup_{x,v} \vv^r f_0(x,v), \quad \text{ for each } r\geq 0.\]
\end{thm}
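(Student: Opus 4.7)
The plan is to view \eqref{e:boltzmann} as a linear kinetic equation in which the collision operator supplies non-local diffusion in velocity. Following \cite{silvestre-boltzmann2016}, I would decompose $Q(f,f) = L_{K_f} f + c_f\, f$, where $L_{K_f}$ is an integro-differential operator acting only in $v$ with kernel $K_f(v,w)$ determined by $f$ itself, and $c_f$ is a scalar lower-order term. Under \eqref{e:hydro} one obtains two-sided structural bounds on $K_f$: a pointwise upper bound and a coercivity estimate in the anisotropic fractional sense (aligned along $v-w$), plus a bound on $c_f$. These are precisely the hypotheses required by the regularity theory for non-local kinetic equations.

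First, I would secure a pointwise upper bound on $f$. For hard potentials ($\gamma>0$), this follows directly from the $L^\infty$ estimate of \cite{silvestre-boltzmann2016} using only \eqref{e:hydro}. For soft potentials, one cannot extract an $L^\infty$ bound from \eqref{e:hydro} alone, so I would propagate the polynomial upper bounds $N_r$ of the initial data through time via the method of \cite{imbert-mouhot-silvestre-decay2020}; this explains why the constants $C_{k,q,\tau}$ must depend on the $N_r$ in that regime. Next, I would invoke the Gaussian lower bound of \cite{imbert-mouhot-silvestre-lowerbound2020}: for any $\tau>0$, there is a fixed Maxwellian $M_\tau$ with $f(t,x,v)\geq M_\tau(v)$ on $[\tau,T]\times\T^d\times\R^d$. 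With the upper and lower bounds in hand, the kernel $K_f$ falls into the non-local elliptic class treated by the weak Harnack inequality of \cite{imbert-silvestre-whi2020}, which yields a uniform Hölder modulus for $f$ on $[\tau,T]\times\T^d\times\R^d$.

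From this Hölder estimate I would bootstrap. Writing the equation as $\partial_t f + v\cdot\nabla_x f = L_{K_f} f + c_f f$, the $C^\alpha$ regularity of $f$ transfers to $C^\alpha$ regularity of the coefficient $K_f$ (through its convolutional dependence on $f$), so the kinetic Schauder estimate of \cite{imbert2018schauder} applies and upgrades $f$ from $C^\alpha$ to $C^{\alpha+2s}$ in the appropriate kinetic scaling. Iterating the Schauder bootstrap produces bounds in arbitrarily high Hölder classes, hence $C^\infty$ bounds on all derivatives $D^k f$. The polynomial velocity weight $\vv^q$ in \eqref{e:reg-estimates} is obtained by re-running each step of the bootstrap with polynomial weights, using the weighted decay propagation of \cite{imbert-mouhot-silvestre-decay2020} as an ingredient, or equivalently by differentiating those decay estimates.

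The main obstacle I expect is verifying, uniformly in $(t,x)$, that the kernel $K_f$ really satisfies the anisotropic ellipticity hypotheses needed by the weak Harnack and Schauder theorems; the coercivity is anisotropic, degenerates for large $|v|$ when $\gamma+2s$ is small, and must be combined correctly with the lower-order term $c_f f$. The second delicate point is control of the non-local tail at large velocities: for hard potentials the $L^\infty$ bound of \cite{silvestre-boltzmann2016} handles this cleanly, but for soft potentials one must carefully shepherd the polynomial moments $N_r$ through every step, which is exactly why extending the theorem beyond $\gamma+2s\geq 0$ (as alluded to earlier in the excerpt) would require an additional input such as an a priori $L^\infty$ bound on $f$.
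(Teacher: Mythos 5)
The paper does not prove Theorem~\ref{t:reg} itself but quotes it from \cite{imbert2019global}; your sketch accurately reconstructs the strategy of that reference and the surrounding program (\cite{silvestre-boltzmann2016,imbert-silvestre-whi2020,imbert2018schauder,imbert-mouhot-silvestre-lowerbound2020,imbert-mouhot-silvestre-decay2020}): Carleman decomposition, $L^\infty$ and polynomial decay bounds, Gaussian lower bound, weak Harnack, and iterated kinetic Schauder, with the $N_r$-dependence for soft potentials entering exactly where you locate it. So the proposal is correct and takes essentially the same approach as the paper's source for this theorem.
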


Theorem \ref{t:reg} is an \emph{a priori} estimate for smooth solutions---the key aspect is that the regularity and decay estimates depend on the zeroth-order quantities in the assumption \eqref{e:hydro} and are quantitatively independent of the qualitative assumption of smoothness.

The following simple lemma allows us to apply Theorem \ref{t:reg} in the near-equilibrium context. It says that a function sufficiently close to a Maxwellian automatically satisfies the hydrodynamic assumptions \eqref{e:hydro}.

\begin{lemma}\label{l:hydro-near_equilibrium}
If $M(v)$ is a Maxwellian distribution, and $g:\R^d\to \R$ satisfies $\vv^q g(v) \leq \frac 1 2$ for all $v\in \R^d$, for some $q>d + 2$, and $M+g \geq 0$, then $M+g$ satisfies the assumptions \eqref{e:hydro} with constants $m_0$, $M_0$, $E_0$, and $H_0$ depending only on $q$.
\end{lemma}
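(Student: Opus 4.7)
The plan is a direct computation for $f := M + g$, reading the hypothesis bilaterally as $|g(v)|\leq \tfrac12\vv^{-q}$ (which is the natural interpretation in view of how the analogous bound is stated in Theorems \ref{t:main} and \ref{t:main2}). Since $q > d+2$, both $C_q := \int_{\R^d}\vv^{-q}\,dv$ and $C'_q := \int_{\R^d}|v|^2\vv^{-q}\,dv$ are finite and depend only on $q$ and $d$.

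For the mass, I would apply the triangle inequality to $\int f\,dv = \int M\,dv + \int g\,dv$, use $\int M\,dv = 1$, and combine with $\left|\int g\,dv\right| \leq \tfrac12 C_q$ to obtain
\[
1 - \tfrac12 C_q \;\leq\; \int f\,dv \;\leq\; 1 + \tfrac12 C_q.
\]
This immediately yields $M_0 := 1 + \tfrac12 C_q$; the positive lower bound $m_0 := 1 - \tfrac12 C_q$ is valid as soon as $C_q < 2$, which can always be arranged by shrinking the smallness constant if necessary. For the energy, the same approach gives $\int|v|^2 f\,dv \leq d + \tfrac12 C'_q =: E_0$, finite precisely because $q > d+2$.

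For the entropy bound, the key observation is that $0 \leq f \leq M + \tfrac12\vv^{-q}$, so $f$ is bounded pointwise and decays at least as fast as $\vv^{-q}$ as $|v|\to\infty$. Consequently $|f\log f|$ is bounded on any fixed ball, and on $|v|\geq R$ it is dominated by $C\vv^{-q}(1+\log\vv)$, which is integrable because $q>d$. Absolute convergence of $\int f\log f\,dv$ then gives a constant $H_0$.

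The only non-routine step I anticipate is the positivity of $m_0$: with the literal constant $\tfrac12$, $C_q < 2$ is automatic for large $q$ but may fail when $q$ is just above $d+2$ in high dimensions. This is a minor arithmetic issue, handled either by taking the smallness constant strictly smaller than $\tfrac12$, or — as is the situation in every application of this lemma in the present paper — by invoking the conservation identities stated in the introduction, which pin $\int (M+g)\,dv$ to exactly $1$.
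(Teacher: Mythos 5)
Your proposal is correct and matches the paper's approach, which is stated simply as ``Direct calculation.'' You also correctly flag the one genuinely delicate point---that the naive lower bound $m_0 = 1 - \tfrac12\int\vv^{-q}\,dv$ can be negative when $q$ is only slightly larger than $d+2$---and your two suggested fixes (a smaller smallness constant, or invoking the conservation identities $\int (M+g)\,dv = 1$) are both valid; note also that the entropy bound is even simpler than you suggest, since $M + g \leq (2\pi)^{-d/2} + \tfrac12 < 1$ for $d \geq 2$ forces $f\log f \leq 0$ pointwise, so $H_0 = 0$ works.
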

\begin{proof} 
Direct calculation.
\end{proof}

\subsection{Trend to equilibrium}

In \cite{desvillettes2005global}, Desvillettes and Villani showed that solutions to the Boltzmann equation \eqref{e:boltzmann} satisfying regularity and non-vacuation conditions that are uniform in $t$, converge to Maxwellians as $t\to \infty$.

\begin{thm}\label{t:global-attractor}
Let $f\geq 0$ be a solution to \eqref{e:boltzmann} on $[0,T]\times \T^d\times \R^d$ satisfying, for a family of positive constants $C_{k,q}$, 
\[ \|f\|_{L^\infty([0,T],H^{k}_{q}(\T^d\times \R^d))} \leq C_{k,q} \quad \text{ for all } k,q\geq 0,\]
and also satisfying the pointwise lower bound
\[ f(t,x,v) \geq K_0 e^{-A_0 |v|^2}, \quad \text{ for all } (t,x,v).\]
Then for any $p>0$ and for any $k,q>0$, there exists $C_p>0$ depending on $d$, $\gamma$, $s$, $A_0$, $K_0$, the constant $C_b$ in \eqref{e:collision-kernel-assumptions}, and $C_{k',q'}$ for sufficiently large $k'$ and $q'$, such that for all $t \in [0,T]$,
\[   \|f(t,\cdot,\cdot)- M\|_{H^{k}_{q}(\T^d\times \R^d)} \leq C_p t^{-p},\]
where $M$ is the Maxwellian with the same total mass, momentum, and energy as $f$.
\end{thm}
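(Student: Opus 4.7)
My plan is to follow the entropy method of Desvillettes and Villani. Under the hypotheses, $f$ is smooth, rapidly decaying, and bounded below by a fixed Maxwellian uniformly in $t$, so both the $H$-functional $H(f) = \iint f \log f \dd v \dd x$ and the entropy production $D(f)$ are well defined along the flow, and the $H$-theorem gives $\tfrac{d}{dt} H(f(t)) = -D(f(t)) \leq 0$. Since $M$ is chosen with the same mass, momentum, and energy as $f$, the quantity $H(f) - H(M)$ equals the nonnegative relative entropy $H(f\,\|\,M)$, and our goal is to drive this quantity to zero at a polynomial rate.

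The first step is a quantitative entropy production inequality of Cercignani type, adapted to the noncutoff kernel \eqref{e:collision-kernel-assumptions}. Using the pointwise lower bound $f \geq K_0 e^{-A_0|v|^2}$ together with the uniform $H^{k'}_{q'}$ control, one shows that at every $(t,x)$,
\[ D_v\bigl(f(t,x,\cdot)\bigr) \geq c\, H\bigl(f(t,x,\cdot)\,\|\,M^{\mathrm{loc}}_{f(t,x,\cdot)}\bigr)^{1+\eta}, \]
where $M^{\mathrm{loc}}_f$ is the local Maxwellian with the same hydrodynamic moments as $f(t,x,\cdot)$, and $\eta > 0$ can be taken arbitrarily small at the expense of requiring larger indices $k', q'$. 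Integration in $x$ controls the spatial integral of the local relative entropy by $D(f)$, and hence by $-\tfrac{d}{dt} H(f)$.

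The heart of the argument is then to dominate the remaining ``hydrodynamic'' piece of the relative entropy. Splitting
\[ H(f\,\|\,M) = \iint f \log \frac{f}{M^{\mathrm{loc}}_f} \dd v \dd x + \iint M^{\mathrm{loc}}_f \log \frac{M^{\mathrm{loc}}_f}{M} \dd v \dd x, \]
the second term measures the $x$-oscillation of the hydrodynamic fields $(\rho, u, T)$. The key observation is that if these fields are far from constant in $x$, then the streaming operator $v\cdot\nabla_x f$ generates velocity-dependent structure in $f$, which must in turn be dissipated and so forces $D(f)$ to remain large. Making this rigorous produces a closed system of differential inequalities involving $H(f\,\|\,M^{\mathrm{loc}}_f)$, $H(M^{\mathrm{loc}}_f\,\|\,M)$, and finitely many $x$-derivatives of the moments; a suitable weighted combination $E(t)$ satisfies $\tfrac{d}{dt} E \leq -c\, E^{1+\eta}$, which integrates to $E(t) \lesssim t^{-1/\eta}$.

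Since $\eta$ can be made arbitrarily small by taking $k', q'$ sufficiently large, this yields $H(f(t)\,\|\,M) \leq C_p\, t^{-p}$ for every $p > 0$. A Csisz\'ar--Kullback--Pinsker inequality converts this into $L^1$ decay, and interpolation against the uniform $H^{k'}_{q'}$ bounds upgrades $L^1$ convergence to convergence in the target space $H^k_q$, at the cost of a fixed multiplicative loss in the exponent $p$ (which is harmless since $p$ was arbitrary). The main obstacle is the third step: quantifying the feedback from hydrodynamic oscillations into entropy production. The Cercignani-type inequality of the first step is delicate in the noncutoff setting but by now standard under the assumed Gaussian lower bound; by contrast, the closure of the coupled hydrodynamic/entropy differential inequalities is specific to the Desvillettes--Villani framework and relies on a careful choice of auxiliary functionals measuring the gap between local and global Maxwellians.
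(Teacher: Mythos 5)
The paper does not prove this theorem; it is quoted from Desvillettes and Villani \cite{desvillettes2005global}, with the paper contributing only the observation that, since the estimate at time $t$ never looks forward in time, the conclusion holds on any finite interval $[0,T]$ with constants independent of $T$. Your sketch is a faithful high-level account of the Desvillettes--Villani entropy method underlying the cited result: the Cercignani-type entropy-production inequality made valid (with exponent $1+\eta$) by the Gaussian lower bound and the uniform regularity, the decomposition of the relative entropy into a local piece controlled by dissipation and a hydrodynamic piece measuring the $x$-oscillation of $(\rho,u,T)$, the feedback mechanism by which transport-driven oscillation forces dissipation, the resulting closed system of differential inequalities giving $E(t)\lesssim t^{-1/\eta}$, and the final Csisz\'ar--Kullback--Pinsker plus interpolation step to upgrade $L^1$ decay to $H^k_q$ decay. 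So the approach is the right one. But this is a high-level sketch of a long and delicate proof rather than a proof: the choice of the auxiliary functionals measuring the gap between local and global Maxwellians, the precise hierarchy of differential inequalities and the number of moments and $x$-derivatives they require, and the quantitative dependence of the constants on $K_0$, $A_0$, and the $C_{k',q'}$ are all left unfilled, and filling them in would amount to reproving \cite{desvillettes2005global}. Within this paper the intended ``proof'' is a citation together with the finite-time remark; that remark is the only genuinely new content you would need to supply.
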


The convergence rate given in \cite{desvillettes2005global} is faster than any polynomial rate, but not explicitly exponential. However, after the analysis in \cite{herau2020regularization} or \cite{alonso2020giorgi}, we now know that the decay rate is actually exponential with the same hypothesis as in Theorem \ref{t:global-attractor}, at least for hard potentials.

Note that we stated Theorem \ref{t:global-attractor} for a solution in an interval of time $[0,T]$. The constants $C_p$ in the estimate are independent of this value of $T$. The result in \cite{desvillettes2005global} is stated for global solutions defined for all time. However, the estimates at time $t$ naturally do not depend on anything about the solution $f$ after that time. The way we intend to use Theorem \ref{t:global-attractor} is that if the solution $f$ exists in a time interval $[0,T]$, for large enough $T$, then $f$ will be very close to the Maxwellian $M$ at the final time $t=T$.

Thanks to the regularity estimates of Theorem \ref{t:reg} and the lower bounds established in \cite{imbert-mouhot-silvestre-lowerbound2020}, the conclusion of Theorem \ref{t:global-attractor} holds for any classical solution $f$ of \eqref{e:boltzmann} for which \eqref{e:hydro} holds.

\subsection{Carleman decomposition}\label{s:carleman}

Following \cite{silvestre-boltzmann2016}, we write the collision operator as the sum $Q(f,g) = Q_s(f,g) + Q_{ns}(f,g)$, with 
\begin{equation}\label{e:Qs}
 Q_{s}(f,g)(v) = \int_{\R^d} [g(v') - g(v)] K_f(v,v') \dd v',  
 \end{equation}
with 
\[ K_f(v,v') = \frac {2^{d-1}} {|v' - v|} \int_{\{w\perp (v'-v)\}} f(v+w) B(r,\cos\theta)r^{2-d} \dd w,\]
where $r^2 = |v'-v|^2 + |w|^2$ and $\cos(\theta/2) = |w|/r$. This kernel can be bounded by
\begin{equation}\label{e:Kf}
 K_f(v,v') \approx \left( \int_{\{w\perp (v'-v)\}} f(v+w)|w|^{\gamma+2s+1} \dd w\right) |v'-v|^{-d-2s},
\end{equation}
For the second term, one has
\begin{equation}\label{e:Qns}
 Q_{ns}(f,g)(v) = C (f\ast |\cdot|^\gamma)(v) g(v).
 \end{equation}
The constant $C$ and the implied constants in \eqref{e:Kf} depend only on $\gamma$, $s$, and $C_b$. The formulas \eqref{e:Qs} and \eqref{e:Qns} are valid for any two functions $f,g:\R^d\to \R$ such that the right-hand sides make sense. For proofs of these formulas, see Sections 4 and 5 of \cite{silvestre-boltzmann2016}. We also recall the following form of the cancellation lemma (see \cite[Lemma 3.6]{imbert-silvestre-whi2020}).
\begin{equation} \label{e:cancellation}
\int_{\R^d} [K_f(v,v') - K_f(v',v)] \dd v' = C (f\ast |\cdot|^\gamma)(v),
\end{equation}
with the same constant $C$ as in \eqref{e:Qns}.

\subsection{Miscellaneous estimates for the collision operator}

In this section we collect some estimates from the literature involving the collision operator $Q$. They will be used for the short-time existence result in Section \ref{s:short-time}.

The following is an elementary estimate for the upper bound of the convolution of a function $f$ and $|v|^\kappa$, for any value of $\kappa$. It is the same as \cite[Lemma 2.4]{imbert2019global}. In this paper, we will apply it for $\kappa = \gamma$ and for $\kappa = \gamma+2s$.

\begin{lemma} \label{l:convolution-C0}
Let $\kappa > -d$ and $f : \R^d \to [0,\infty)$ be a function such that $f(v) \leq N \vv^{-q}$ for some $q>d+\kappa_+$. Then
\[ \int_{\R^d} f(v+w) |w|^\kappa \dd w \leq C N \vv^\kappa.\]
Here the constant $C$ depends on $d$, $\kappa$ and $q$ only. Moreover, $C$ can be taken independent of $q$ provided that $q \geq q_0$ for some $q_0>d+\kappa_+$.
\end{lemma}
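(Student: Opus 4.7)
The plan is to reduce the claim to a purely analytic estimate on a convolution of the weight $\langle \cdot \rangle^{-q}$ with $|\cdot|^\kappa$, and then split the domain into a ``near'' region $|w| \leq \langle v \rangle/2$ and a ``far'' region $|w| > \langle v \rangle/2$. Using the pointwise bound $f(v+w) \leq N \langle v+w\rangle^{-q}$, the problem reduces to showing
\[
I(v) := \int_{\R^d} \langle v+w\rangle^{-q} |w|^\kappa \dd w \leq C \langle v \rangle^\kappa,
\]
with $C$ depending on $d$, $\kappa$, $q$ only, and uniform for $q$ bounded below by any fixed $q_0 > d+\kappa_+$.

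In the near region $|w| \leq \langle v\rangle/2$, the triangle inequality gives $\langle v+w\rangle \gtrsim \langle v\rangle$, so $\langle v+w\rangle^{-q} \leq C \langle v\rangle^{-q}$ pulls out of the integral, leaving $\int_{|w|\leq \langle v\rangle/2} |w|^\kappa \dd w \leq C \langle v \rangle^{d+\kappa}$ (this is where $\kappa > -d$ is needed for local integrability at the origin). The contribution is therefore bounded by $C \langle v \rangle^{d+\kappa-q}$, which is $\leq C \langle v \rangle^\kappa$ because $q > d \geq 0$.

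For the far region $|w| > \langle v\rangle/2$, I split on the sign of $\kappa$. If $\kappa \leq 0$, then $|w|^\kappa \leq C \langle v\rangle^\kappa$ uniformly on this region, and what remains is $\langle v \rangle^\kappa \int_{\R^d} \langle v+w\rangle^{-q} \dd w \leq C \langle v \rangle^\kappa$ since $q > d$. If $\kappa > 0$, then I change variables $u = v+w$ and use $|w| = |u-v| \leq \langle u\rangle + \langle v\rangle$, hence $|w|^\kappa \leq C(\langle u\rangle^\kappa + \langle v\rangle^\kappa)$. The $\langle v\rangle^\kappa$ piece is handled as in the $\kappa \leq 0$ subcase, while the $\langle u\rangle^\kappa$ piece yields $C \int_{\R^d} \langle u\rangle^{\kappa-q} \dd u$, finite because $q > d + \kappa_+ = d + \kappa$.

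I do not expect a real obstacle: the estimate is elementary, and the only subtlety is verifying that the constants depend only on $d$, $\kappa$, $q$ and stabilize for $q \geq q_0$. This is immediate since each of the three bounds above involves either $\int \langle u\rangle^{-q} \dd u$ or $\int \langle u\rangle^{\kappa-q}\dd u$, whose values are monotone decreasing in $q$ and hence controlled once $q \geq q_0 > d + \kappa_+$.
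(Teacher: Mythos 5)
Your proof is correct. Worth noting: the paper does not actually prove this lemma; it simply cites \cite[Lemma~2.4]{imbert2019global} and, for the final uniformity claim, observes that if $f(v)\le N\langle v\rangle^{-q}$ then also $f(v)\le N\langle v\rangle^{-q_0}$, so one may run the argument at the fixed exponent $q_0$ and take $C=C(d,\kappa,q_0)$. Your self-contained near/far decomposition is exactly the sort of elementary argument the cited lemma rests on, and it buys you transparency about where each hypothesis ($\kappa>-d$ for integrability at the origin, $q>d+\kappa_+$ for integrability at infinity) is used. One small imprecision: you assert that all three contributions ``involve either $\int\langle u\rangle^{-q}\,\mathrm{d}u$ or $\int\langle u\rangle^{\kappa-q}\,\mathrm{d}u$,'' but the near-region term does not; there the $q$-dependence comes only from the bound $\langle v\rangle^{d+\kappa-q}\le\langle v\rangle^{\kappa}$, which is valid and $q$-uniform once $q\ge d$, so the conclusion still stands. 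You could also shortcut the entire uniformity discussion with the paper's one-line observation.
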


The final statement about the choice of $C$ independently of $q$ follows from the simple observation that if $f(v) \leq N \vv^{-q}$, then also $f(v) \leq N \vv^{-q_0}$.

The kernel $K_f$ of \eqref{e:Kf} satisfies some ellipticity bounds depending only on macroscopic quantities associated to $f$. When $f \geq 0$, we also get from \eqref{e:Kf} that $K_f \geq 0$. In some parts of this paper we will evaluate $Q(f,M)$ for a function $f$ that may change sign. Thus, the kernel $K_f$ may change sign as well. We observe that $|K_f(v,v')| \leq K_{|f|}(v,v')$, and this allows us to deduce some basic estimates for the kernel $K_f$ regardless of the sign of $f$.

The following lemma is proved in \cite{silvestre-boltzmann2016}.
\begin{lemma} \label{l:K_upper_bound}
Let $K_f$ be the kernel given by the formula \eqref{e:Kf}. Then, for any $r>0$ and $v \in \R^d$,
\[\begin{split}
 \int_{B_r(v)} |v'-v|^2 |K_f(v,v')| \dd v' &\leq C r^{2-2s} \int_{\R^d} |f(v-w)| |w|^{\gamma+2s} \dd w,\\
 \int_{\R^d\setminus B_r(v)} |K_f(v,v')| \dd v' &\leq C r^{-2s} \int_{\R^d} |f(v-w)| |w|^{\gamma+2s} \dd w.
 \end{split}
 \]
Here, the constant $C$ depends on dimension, $s$ and $\gamma$ only (not on $f$).
\end{lemma}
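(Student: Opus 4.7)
The plan is to reduce both estimates to a single polar-coordinates computation based on the pointwise bound \eqref{e:Kf}. Since the formula for $K_f$ is linear in $f$, we have $|K_f(v,v')| \leq K_{|f|}(v,v')$, so it suffices to prove the two bounds assuming $f \geq 0$, with $\int f(v-w)|w|^{\gamma+2s}\,dw$ on the right-hand side.

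The main step is to pass from the parametrization appearing in \eqref{e:Kf} to an integral over $\R^d$ in $w$. Writing $v' = v + \rho e$ with $\rho = |v'-v|$ and $e \in S^{d-1}$, the bound \eqref{e:Kf} gives
\[
|K_f(v,v')| \lesssim \rho^{-d-2s} \int_{e^\perp} f(v+w)|w|^{\gamma+2s+1}\,dw,
\]
so that for any measurable set $E \subset \R^d$ radially symmetric around $v$,
\[
\int_E \varphi(|v'-v|)\,|K_f(v,v')|\,dv' \lesssim \left(\int_0^\infty \one_{\{\rho e \in E-v\}}\varphi(\rho)\rho^{-1-2s}\,d\rho\right)\int_{S^{d-1}}\int_{e^\perp} f(v+w)|w|^{\gamma+2s+1}\,dw\,d\sigma(e).
\]
Taking $\varphi(\rho) = \rho^2$ and $E = B_r(v)$ yields a radial integral $\int_0^r \rho^{1-2s}\,d\rho = r^{2-2s}/(2-2s)$; taking $\varphi \equiv 1$ and $E = \R^d \setminus B_r(v)$ yields $\int_r^\infty \rho^{-1-2s}\,d\rho = r^{-2s}/(2s)$. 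These are exactly the prefactors claimed.

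The key remaining step, which is the main obstacle in the sense that everything else is routine, is a coarea-type identity: for any measurable $g : \R^d \to [0,\infty)$,
\[
\int_{S^{d-1}}\int_{e^\perp} g(w)\,dw\,d\sigma(e) = |S^{d-2}| \int_{\R^d} \frac{g(w)}{|w|}\,dw.
\]
To prove this, I would introduce polar coordinates $w = \rho \eta$ inside each hyperplane $e^\perp$ (with $\eta \in S^{d-1} \cap e^\perp$ and Jacobian $\rho^{d-2}\,d\rho\,d\sigma(\eta)$) and then swap the order of integration between $e$ and $\eta$: for fixed $\eta \in S^{d-1}$, the set $\{e \in S^{d-1} : e \perp \eta\}$ is a great $(d-2)$-sphere of measure $|S^{d-2}|$. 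Reassembling spherical coordinates in $\R^d$ (with $dw = \rho^{d-1}d\rho\,d\sigma(\eta)$) produces exactly the factor $1/|w|$. Applying this identity with $g(w) = f(v+w)|w|^{\gamma+2s+1}$ converts the double integral into $|S^{d-2}|\int_{\R^d} f(v+w)|w|^{\gamma+2s}\,dw$, which by the substitution $w \mapsto -w$ equals the right-hand side in the statement. Combining this with the radial integrals computed above finishes both inequalities, with a constant depending only on $d$, $s$, $\gamma$, and the implicit constant in \eqref{e:Kf} (which itself depends only on those quantities and $C_b$).
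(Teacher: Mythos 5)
Your proof is correct. Note that the paper does not supply its own argument for this lemma---it simply cites \cite{silvestre-boltzmann2016}---so there is no internal proof to compare against; what you give is the natural self-contained derivation, and it essentially matches how this estimate is obtained from the Carleman representation in that reference. The structure is sound: reducing to $f\geq 0$ via $|K_f|\leq K_{|f|}$, then passing to polar coordinates $v'=v+\rho e$ so that the $\rho$-integral factors off (for the radially symmetric sets $E=B_r(v)$ and $E=\R^d\setminus B_r(v)$, the indicator $\one_{\{\rho e\in E-v\}}$ depends on $\rho$ alone, which is exactly where you need the radial symmetry you correctly flag). The radial integrals $\int_0^r \rho^{1-2s}\,d\rho$ and $\int_r^\infty \rho^{-1-2s}\,d\rho$ converge precisely because $s\in(0,1)$, and they produce the stated prefactors.

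The one step that merits scrutiny is the identity
\[
\int_{S^{d-1}}\int_{e^\perp} g(w)\,dw\,d\sigma(e) = |S^{d-2}|\int_{\R^d}\frac{g(w)}{|w|}\,dw,
\]
and your sketch of it is correct. The exchange of the $e$- and $\eta$-integrations over the incidence manifold $\{(e,\eta)\in S^{d-1}\times S^{d-1}: e\cdot\eta=0\}$ is legitimate because the iterated-integral formula is symmetric in $(e,\eta)$ (equivalently, both iterated integrals define the same $O(d)$-invariant measure on that orthonormal 2-frame manifold). The bookkeeping of Jacobians---$\rho^{d-2}\,d\rho\,d\sigma_{d-2}(\eta)$ on the hyperplane versus $\rho^{d-1}\,d\rho\,d\sigma_{d-1}(\eta)$ in $\R^d$---produces exactly the $1/|w|$ factor that converts $|w|^{\gamma+2s+1}$ into $|w|^{\gamma+2s}$, and the final $w\mapsto -w$ substitution yields the $f(v-w)$ form in the statement. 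Your closing remark about the constant is also apt: the lemma as printed omits $C_b$ from its list of dependencies, but the implied constant in \eqref{e:Kf} does carry that dependence; this is a harmless imprecision in the paper's statement rather than an issue with your argument.
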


Bounds for the kernel $K_f$ such as the one in Lemma \ref{l:K_upper_bound} help us estimate the value of the application of the integral operator to a smooth function $\varphi$. The following lemma can be found in \cite[Lemma 4.6]{imbert2019global} and in \cite[Lemma 2.3]{imbert-mouhot-silvestre-lowerbound2020}.

\begin{lemma} \label{l:pointwise_operator_bound}
Let $K: \R^d \to \R$ be a symmetric kernel (i.e. $K(v,v+w) = K(v,v-w)$) so that 
\[ \int_{B_r(v)} |v'-v|^2 |K(v,v')| \dd v' \leq \Lambda r^{-2s}.\]
Consider the integro-differential operator $\mathcal L_K$,
\[ \mathcal L_K \varphi(v) = \PV \int_{\R^d} (\varphi(v') - \varphi(v)) K(v,v') \dd v'.\]
If $\varphi$ is bounded in $\R^d$ and $C^{\alpha}$ at $v$ for some $2s < \alpha \leq 2$, then
\[ \left| \mathcal L_K \varphi(v) \right| \leq C \Lambda |\varphi|_{C^0(\R^d)}^{1-\frac {2s} {\alpha}} [\varphi]_{C^{\alpha}(v)}^{\frac {2s} {\alpha}}.\]
The constant $C$ depends on dimension, $s$ and $\alpha$.
\end{lemma}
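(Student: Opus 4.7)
The plan is a standard split-and-optimize argument at a free radius $r>0$. Write
\[
\mathcal L_K \varphi(v) = \PV \int_{B_r(v)} (\varphi(v')-\varphi(v)) K(v,v')\dd v' + \int_{\R^d\setminus B_r(v)} (\varphi(v')-\varphi(v)) K(v,v')\dd v' =: I_{\mathrm{near}} + I_{\mathrm{far}},
\]
and bound each piece in a way that matches the $r$-behavior of the kernel moment hypothesis (which for the argument below I read as $\int_{B_r}|v'-v|^2|K|\dd v' \leq \Lambda r^{2-2s}$, so that the integral actually grows with $r$).

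For $I_{\mathrm{near}}$, the symmetry $K(v,v+w)=K(v,v-w)$ lets me replace the first-order difference by a symmetric second-order difference,
\[
I_{\mathrm{near}} = \tfrac{1}{2}\int_{B_r} (\varphi(v+w)+\varphi(v-w)-2\varphi(v))\, K(v,v+w)\dd w,
\]
which is an absolutely convergent integral. Using the pointwise $C^\alpha$ seminorm at $v$ (for $\alpha\le 1$ directly by the triangle inequality, for $\alpha\in(1,2]$ by subtracting the affine Taylor polynomial at $v$, whose linear part is killed by symmetry), I get $|\varphi(v+w)+\varphi(v-w)-2\varphi(v)|\le C[\varphi]_{C^\alpha(v)}|w|^\alpha$. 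Since $\alpha\le 2$, the kernel bound gives $\int_{B_r}|w|^\alpha|K|\dd w \le C\Lambda r^{\alpha-2s}$, obtained by the elementary dyadic decomposition $B_r = \bigsqcup_j B_{2^{-j}r}\setminus B_{2^{-j-1}r}$ and writing $|w|^\alpha = |w|^{\alpha-2}|w|^2$ on each annulus. Thus $|I_{\mathrm{near}}|\le C\Lambda [\varphi]_{C^\alpha(v)} r^{\alpha-2s}$.

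For $I_{\mathrm{far}}$, I use the trivial bound $|\varphi(v')-\varphi(v)|\le 2|\varphi|_{C^0}$, which needs $\int_{\R^d\setminus B_r}|K|\dd v' \le C\Lambda r^{-2s}$. This is \emph{not} part of the hypothesis but follows from it by a Chebyshev-type dyadic argument: on each annulus $B_{2^{j+1}r}\setminus B_{2^j r}$,
\[
\int_{B_{2^{j+1}r}\setminus B_{2^j r}}|K|\dd v' \le (2^j r)^{-2}\int_{B_{2^{j+1}r}}|v'-v|^2|K|\dd v' \le C\Lambda (2^j r)^{-2s},
\]
and the geometric sum over $j\geq 0$ converges since $s>0$. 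Therefore $|I_{\mathrm{far}}|\le C\Lambda |\varphi|_{C^0} r^{-2s}$.

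Adding the two estimates gives $|\mathcal L_K\varphi(v)| \le C\Lambda\bigl([\varphi]_{C^\alpha(v)} r^{\alpha} + |\varphi|_{C^0}\bigr) r^{-2s}$, and optimizing by choosing $r^\alpha = |\varphi|_{C^0}/[\varphi]_{C^\alpha(v)}$ (valid precisely because $\alpha>2s$, so both terms balance at an interior minimum) yields the claimed interpolation inequality with exponents $1-2s/\alpha$ and $2s/\alpha$. The only non-mechanical step is the symmetrization on the near piece for low regularity $\alpha\le 1$, where the PV must be interpreted carefully; this is the step I would check most carefully, although it is a standard manipulation for symmetric kernels of order $2s$.
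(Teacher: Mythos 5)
Your proof is correct. The paper does not actually prove Lemma \ref{l:pointwise_operator_bound}: it cites \cite[Lemma 4.6]{imbert2019global} and \cite[Lemma 2.3]{imbert-mouhot-silvestre-lowerbound2020} for the statement, so there is no in-paper proof to compare against. Your reconstruction is the standard argument that appears in those references: symmetrize the near contribution into a second-order difference, dominate it pointwise by $[\varphi]_{C^\alpha(v)}|w|^\alpha$ (using that the affine part of the Taylor polynomial at $v$ is annihilated by the even kernel), and extract the $r^{\alpha-2s}$ scaling by a dyadic decomposition of $B_r$; control the far contribution by $|\varphi|_{C^0}$ times the tail mass $\int_{\R^d\setminus B_r}|K|\,\dd v'\lesssim \Lambda r^{-2s}$, which, as you observe, is a Chebyshev-plus-dyadic consequence of the hypothesis; then optimize over $r$, where $\alpha>2s$ guarantees the two contributions balance at an interior optimum. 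You are also right that the hypothesis as printed, $\int_{B_r}|v'-v|^2|K|\,\dd v'\le\Lambda r^{-2s}$, must be read as $\Lambda r^{2-2s}$: that is precisely what Lemma \ref{l:K_upper_bound} supplies, and the bound as literally written would force $K\equiv 0$ upon letting $r\to\infty$. One minor remark on the step you flagged: for $\alpha\le 1$ the symmetrization is in fact unnecessary, since $\alpha>2s$ already makes $\int_{B_r}|\varphi(v')-\varphi(v)|\,|K|\,\dd v'$ absolutely convergent using only the first-order bound $|\varphi(v')-\varphi(v)|\lesssim[\varphi]_{C^\alpha(v)}|v'-v|^\alpha$; carrying the symmetric second-difference uniformly over $\alpha\in(2s,2]$ is simply a clean way to treat both regimes at once and costs nothing.
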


We use the (more or less standard) notation $[\varphi]_{C^{\alpha}(v)}$ for the $C^\alpha$ semi-norm localized at the point $v$
\[ [\varphi]_{C^{\alpha}(v)}:= \inf \left\{ \sup_{v' \in \R^d} \frac{|\varphi(v') - p(v')|}{|v'-v|^\alpha} : \text{for any polynomial } p \text{ so that } \deg p < \alpha \right\}. \]
In particular $[\varphi]_{C^{\alpha}(v)} \leq [\varphi]_{C^{\alpha}(\R^d)}$, and it depends on the point $v$. Observe for example that
\[ [ \langle \cdot \rangle^{-q} ]_{C^\alpha(v)} \approx q^\alpha \langle v \rangle^{-\alpha}. \]

Combining Lemma \ref{l:K_upper_bound} with Lemma \ref{l:pointwise_operator_bound}, we obtain the following.

\begin{cor}\label{c:Qsbound}
If $g$ is a bounded and $C^2$ function, then the operator $Q_s(f,g)$ defined by \eqref{e:Qs} satisfies
\[ |Q_s(f,g)| \lesssim \|g\|_{L^\infty(\R^d)}^{1-s} [g]_{C^2(v)}^s  \int_{\R^d} |f(v-w)||w|^{\gamma+2s} \dd w.\]
\end{cor}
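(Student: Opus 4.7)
The plan is to recognize that $Q_s(f,g)$ is exactly the operator $\mathcal{L}_{K_f} g$ from Lemma \ref{l:pointwise_operator_bound}, and then combine the two preceding lemmas directly. In particular, the result is a two-line corollary once the right bookkeeping is set up.

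First I would verify that $K_f$ satisfies the symmetry assumption of Lemma \ref{l:pointwise_operator_bound}, namely $K_f(v,v+w) = K_f(v,v-w)$. Inspecting the explicit formula for $K_f$ in Section \ref{s:carleman}, the substitution $v' \mapsto 2v - v'$ preserves $|v'-v|$, the hyperplane $\{w \perp (v'-v)\}$, and both $r = \sqrt{|v'-v|^2 + |w|^2}$ and $\cos\theta$ (which depends only on $|w|/r$), so the integral defining $K_f(v,v')$ is unchanged. Hence the symmetry holds regardless of the sign of $f$.

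Next, since $f$ may take negative values (the corollary is stated for general $f$, not just $f\geq 0$), I would observe from the Carleman formula that $|K_f(v,v')| \leq K_{|f|}(v,v')$ pointwise. Applying Lemma \ref{l:K_upper_bound} to $|f|$ therefore yields
\[
\int_{B_r(v)} |v'-v|^2 |K_f(v,v')| \dd v' \;\leq\; C\, r^{2-2s} \int_{\R^d} |f(v-w)|\,|w|^{\gamma+2s} \dd w,
\]
so we may take $\Lambda := C \int_{\R^d} |f(v-w)| |w|^{\gamma+2s} \dd w$ as the constant in the hypothesis of Lemma \ref{l:pointwise_operator_bound}.

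Finally, noting that $Q_s(f,g)(v) = \mathcal{L}_{K_f} g(v)$ by \eqref{e:Qs}, I would apply Lemma \ref{l:pointwise_operator_bound} with $\alpha = 2$ (which is allowed since $g$ is assumed bounded and $C^2$, so in particular $2 > 2s$) to conclude
\[
|Q_s(f,g)(v)| \;\leq\; C\, \Lambda\, \|g\|_{L^\infty(\R^d)}^{1-s}\, [g]_{C^2(v)}^{s},
\]
which is exactly the claimed inequality. There is no real obstacle: the only subtlety is handling the possibly-signed $f$ via the trivial domination $|K_f| \le K_{|f|}$, and then the corollary is just a packaging of Lemma \ref{l:K_upper_bound} into Lemma \ref{l:pointwise_operator_bound}.
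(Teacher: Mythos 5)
Your proof is correct and matches the paper's approach exactly: the paper proves this corollary simply by stating ``Combining Lemma~\ref{l:K_upper_bound} with Lemma~\ref{l:pointwise_operator_bound},'' which is precisely the packaging you carried out, including the (implicit in the paper, explicit in your write-up) use of the symmetry of $K_f$ and the domination $|K_f|\leq K_{|f|}$ discussed in Section~\ref{s:carleman}.
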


\subsection{Coercivity and trilinear estimates}

We also require some estimates on the collision operator in Sobolev norms. A coercivity estimate for the Boltzmann collision operator with an optimal weight is given in \cite{gressman-strain2011sharp} in terms of the custom-defined seminorm $\dot N^{s,\gamma}$ given by
\begin{equation}\label{e:Ns-gamma}
 \|g\|_{\dot N^{s,\gamma}}^2 := \iint_{\R^d\times\R^d} \left( \vv \langle v'\rangle\right)^{(\gamma+2s+1)/2} \frac {(g(v') - g(v))^2}{d(v,v')^{d+2s}} \one_{\{d(v,v')\leq 1\}} \dd v',
\end{equation}
where 
\[ d(v,v') := \sqrt{ |v-v'|^2 + \frac 1 4 (|v|^2 - |v'|^2)^2}.\]
The coercivity estimate of \cite{gressman-strain2011sharp} is stated as follows:
\begin{prop}\label{p:coercive}
For $f$ satisfying the assumptions \eqref{e:hydro}, there exists a constant $c_0>0$ depending only on the constants $m_0$, $M_0$, $E_0$, and $H_0$, such that
\[ \int_{\R^d} Q(f, g) g \dd v \leq -c_0 \|g\|_{\dot N^{s,\gamma}(\R^d)}^2 + C_0 \|f\|_{L^1_{\gamma}(\R^d)} \|g\|_{L^2_{\gamma/2}(\R^d)}^2.\]
\end{prop}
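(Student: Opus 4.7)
The plan is to adapt the approach of \cite{gressman-strain2011sharp}. The first step is algebraic. Split $Q = Q_s + Q_{ns}$ as in \eqref{e:Qs}--\eqref{e:Qns} and expand $\int Q_s(f,g) g \dd v$ via the identity $(g(v')-g(v))g(v) = \tfrac{1}{2}(g(v')^2 - g(v)^2) - \tfrac{1}{2}(g(v')-g(v))^2$. Symmetrizing the $g(v')^2$ term under $v \leftrightarrow v'$ and applying the cancellation identity \eqref{e:cancellation}, the constants coming from \eqref{e:cancellation} and from $Q_{ns}$ only partially cancel, leaving
\[\int_{\R^d} Q(f,g) g \dd v = -\tfrac{1}{2}\iint_{\R^d \times \R^d} K_f(v,v')(g(v')-g(v))^2 \dd v' \dd v + R(f,g),\]
with $R(f,g) = \int \tilde C (f * |\cdot|^\gamma)(v) g(v)^2 \dd v$ for an explicit constant $\tilde C$. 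A standard convolution estimate, treating $\gamma \geq 0$ directly (via $|v-v_*|^\gamma \lesssim \vv^\gamma \langle v_*\rangle^\gamma$) and $\gamma < 0$ via Hardy--Littlewood--Sobolev, bounds $|R(f,g)| \leq C_0 \|f\|_{L^1_\gamma}\|g\|_{L^2_{\gamma/2}}^2$, yielding the error term in the statement.

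The proposition then reduces to the pointwise kernel lower bound
\[K_f(v,v') \gtrsim c_0\, (\vv \langle v'\rangle)^{(\gamma+2s+1)/2}\, d(v,v')^{-d-2s}\, \one_{\{d(v,v')\leq 1\}},\]
with $c_0 = c_0(m_0, M_0, E_0, H_0)$. The factor $d(v,v')^{-d-2s}$ reflects the natural scale of the Carleman integral \eqref{e:Kf} when the anisotropic distance $d$, which respects energy conservation, replaces the Euclidean $|v-v'|$. Moreover, $d(v,v')\leq 1$ forces $\vv \approx \langle v'\rangle$ (since $||v|-|v'|| \leq \min(1, 2/(|v|+|v'|))$), so the weight collapses to $\vv^{\gamma+2s+1}$, which is the scale that the inner hyperplane integral $\int_{w\perp(v'-v)} f(v+w)|w|^{1+\gamma+2s}\dd w$ must reproduce.

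The main obstacle is to establish this uniform lower bound on the hyperplane integral using only the hydrodynamic data \eqref{e:hydro}. My plan is to invoke the pointwise Gaussian lower bound $f(t,x,v)\geq K_0 e^{-A_0 |v|^2}$ of \cite{imbert-mouhot-silvestre-lowerbound2020}, whose constants depend only on $m_0, M_0, E_0, H_0$ under \eqref{e:hydro}, and then reduce the problem to computing a Gaussian integral over the affine hyperplane $v+(v'-v)^\perp$. The critical and most delicate regime is where $|v-v'|$ is small but $||v|^2 - |v'|^2|$ is not, so that the hyperplane passes far from the origin and an exponential factor $e^{-A_0(v\cdot(v'-v)/|v'-v|)^2}$ competes with the singular prefactor $|v'-v|^{-d-2s}$ in \eqref{e:Kf}. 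It is here that the invariant $(|v|^2-|v'|^2)^2$ built into $d(v,v')^2$ plays its decisive role, forcing exponential and prefactor to combine precisely into $d(v,v')^{-d-2s}$; verifying this matching uniformly across all positions of the hyperplane is the technical heart of \cite{gressman-strain2011sharp} and the step I expect to consume most of the effort. Once the kernel bound is established, inserting it into the dissipation term and comparing with the definition \eqref{e:Ns-gamma} of $\dot N^{s,\gamma}$ delivers the coercivity constant $c_0$.
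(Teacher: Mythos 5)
The paper offers no proof of Proposition~\ref{p:coercive}: it is quoted verbatim from \cite{gressman-strain2011sharp} (with a mild remark that the error-term weight can be taken as $\|f\|_{L^1_\gamma}$ rather than the exponent appearing in Assumption U of that reference). So you are comparing against a citation, not a proof; still, your reconstruction has a genuine flaw worth flagging.

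Your reduction of the left side to $-\tfrac12\iint K_f(v,v')(g'-g)^2\dd v'\dd v$ plus a convolution remainder via the cancellation identity \eqref{e:cancellation} is sound, and the bound on the remainder is routine. The gap is in the coercivity step. You claim a \emph{pointwise} kernel lower bound
\[
K_f(v,v') \gtrsim c_0\,(\vv\langle v'\rangle)^{(\gamma+2s+1)/2}\,d(v,v')^{-d-2s}\,\one_{\{d(v,v')\le 1\}},
\]
but this cannot hold under \eqref{e:hydro} alone. The hydrodynamic bounds allow $f$ to vanish identically on large regions (e.g.\ $f$ supported in an annulus has bounded mass, energy, and entropy), and for such $f$ the hyperplane integral $\int_{w\perp(v'-v)}f(v+w)|w|^{1+\gamma+2s}\dd w$ in \eqref{e:Kf} is exactly zero for many pairs $(v,v')$. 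The actual arguments in \cite{gressman-strain2011sharp} and in Appendix~A of \cite{imbert-silvestre-whi2020} (see Corollary~\ref{c:GS}) obtain a lower bound only on a cone of directions $(v'-v)/|v'-v|$ of positive angular measure for each $v$, and show that this restricted contribution already dominates $\|g\|_{\dot N^{s,\gamma}}^2$ after integration; the pointwise inequality you wrote is strictly stronger than what is true.

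Your proposed repair --- invoking the Gaussian lower bound $f\ge K_0 e^{-A_0|v|^2}$ of \cite{imbert-mouhot-silvestre-lowerbound2020} --- does not close the gap, and in fact changes the scope of the statement. That lower bound is a property of \emph{solutions} of the Boltzmann equation, and only for positive time; it is not a consequence of the static assumptions \eqref{e:hydro}. Proposition~\ref{p:coercive} is a functional inequality and must hold for arbitrary $f$ satisfying \eqref{e:hydro}, which is precisely how the paper uses it (through Corollary~\ref{c:GS} and Lemma~\ref{l:magic_negative_term_2}): it is applied to $M+f^h$ at every time of the approximation scheme, including $t=0$, where no time-positivity is available. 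To prove the proposition correctly you would need the cone-of-nondegeneracy argument, which extracts, from the entropy bound, a set $A\subset B_R$ of measure $\ge\mu$ on which $f\ge\ell$, and then produces the coercivity from the intersections of the perpendicular hyperplanes with $A$ --- without ever claiming a pointwise lower bound on $K_f(v,v')$.
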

We also follow \cite{gressman-strain2011sharp} in defining the $N^{s,\gamma}$ norm as
\[ \|g\|_{N^{s,\gamma}}^2 :=  \|g\|_{\dot N^{s,\gamma}}^2 + \|g\|_{L^2_{(\gamma+2s)/2}}^2.\]
Note that the definition of $L^2_q$ in \cite{gressman-strain2011sharp} is not the same as the one we use here. It corresponds to $L^2_{q/2}$ in our notation.

The following trilinear estimate is also proved in \cite{gressman-strain2011sharp}.

\begin{prop}\label{p:trilinear}
\[ 
\left|\int_{\R^d} Q(f,g) h \dd v\right| \lesssim \|f\|_{L^1_{(\gamma+2s+2)}} \|g\|_{N^{s,\gamma}} \|h\|_{N^{s,\gamma}}.\]
\end{prop}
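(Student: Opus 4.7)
The plan is to use the Carleman decomposition $Q = Q_s + Q_{ns}$ from \eqref{e:Qs}, \eqref{e:Qns} and estimate each piece separately. For the non-singular piece, since $Q_{ns}(f,g)(v) = C(f\ast |\cdot|^\gamma)(v)\, g(v)$, one has
\[
\left|\int_{\R^d} Q_{ns}(f,g)\, h\, \dd v\right| \leq C \int_{\R^d} (|f|\ast |\cdot|^\gamma)(v)\, |g(v)|\, |h(v)|\, \dd v.
\]
A pointwise convolution bound in the spirit of Lemma \ref{l:convolution-C0} controls $(|f|\ast |\cdot|^\gamma)(v) \lesssim \|f\|_{L^1_{\gamma_+}} \vv^\gamma$, and splitting the weight $\vv^\gamma$ evenly and applying Cauchy--Schwarz in $v$ yields a bound by $\|f\|_{L^1_{\gamma_+}} \|g\|_{L^2_{\gamma/2}} \|h\|_{L^2_{\gamma/2}}$, which is absorbed by the right-hand side of the proposition since $\gamma_+ \leq \gamma+2s+2$ (using $\gamma+2s \geq 0$) and $L^2_{\gamma/2} \hookrightarrow L^2_{(\gamma+2s)/2} \hookrightarrow N^{s,\gamma}$.

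For the singular piece, the plan is to write $\int Q_s(f,g)\, h\, \dd v = \iint (g(v')-g(v))\, h(v) K_f(v,v')\, \dd v'\, \dd v$ and symmetrize in $(v,v')$. Using the cancellation identity \eqref{e:cancellation} to isolate the truly symmetric contribution, one obtains
\[
\int_{\R^d} Q_s(f,g)\, h\, \dd v = -\frac{1}{2} \iint_{\R^d\times \R^d} (g'-g)(h'-h)\, K_f(v,v')\, \dd v'\, \dd v + R_{f,g,h},
\]
where the remainder $R_{f,g,h}$ takes the form $C\int (f\ast|\cdot|^\gamma)\, g\, h\, \dd v$ and is handled exactly as in the non-singular case above. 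Cauchy--Schwarz then reduces the entire matter to the single-sided kernel estimate
\begin{equation} \label{e:proposal-onesided}
 \iint_{\R^d\times \R^d} (g(v') - g(v))^2\, |K_f(v,v')|\, \dd v'\, \dd v \lesssim \|f\|_{L^1_{\gamma+2s+2}}\, \|g\|_{N^{s,\gamma}}^2,
\end{equation}
applied once to $g$ and once to $h$.

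The main obstacle is establishing \eqref{e:proposal-onesided}, for which I would split the $(v,v')$ integration according to the anisotropic distance $d(v,v')$ of \eqref{e:Ns-gamma}. On the non-grazing region $\{d(v,v')>1\}$ I bound $(g'-g)^2 \leq 2 g'^2 + 2 g^2$ and use Lemma \ref{l:K_upper_bound} to integrate in $v'$, producing a factor $\|f\|_{L^1_{\gamma+2s}}\, \vv^{\gamma+2s}$ that pairs with $\|g\|_{L^2_{(\gamma+2s)/2}}^2 \leq \|g\|_{N^{s,\gamma}}^2$. On the grazing region $\{d(v,v')\leq 1\}$, which is the heart of the argument, one uses the Carleman formula \eqref{e:Kf} to establish a pointwise bound of the form
\[
|K_f(v,v')| \lesssim \bigl(\vv\,\langle v'\rangle\bigr)^{(\gamma+2s+1)/2}\, \frac{1}{d(v,v')^{d+2s}} \int_{w\perp (v'-v)} |f(v+w)|\, \langle w\rangle^{\gamma+2s+1}\, \dd w,
\]
after which Fubini (integrating in $v'$ first along the hyperplane orthogonal to $w$, then changing variables to center on $v+w$) transfers the $w$-integral onto $f$ with weight of order $\langle\cdot\rangle^{\gamma+2s+2}$, producing exactly $\|f\|_{L^1_{\gamma+2s+2}}$. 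The delicate geometric fact needed here is that on the codimension-one hyperplane through $v$ orthogonal to $v'-v$, the kinetic-energy gap $(|v|^2 - |v'|^2)^2/4$ in the definition of $d(v,v')$ is precisely what converts the Euclidean $|v'-v|^{-d-2s}$ in $K_f$ into the anisotropic $d(v,v')^{-d-2s}$ appearing in $\|g\|_{\dot N^{s,\gamma}}^2$. Balancing the anisotropic weight $(\vv\langle v'\rangle)^{(\gamma+2s+1)/2}$ between the two sides of the Cauchy--Schwarz split is the technical core of the argument.
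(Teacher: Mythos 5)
The paper does not actually prove Proposition~\ref{p:trilinear}; it is cited directly from \cite{gressman-strain2011sharp}, together with a remark that $\|f\|_{L^1_{\gamma+2s+2}}$ is an admissible (though probably non-optimal) upper bound for the constant in Assumption~U of that reference. You are therefore attempting a from-scratch proof of a result the authors defer to the literature. That is legitimate, but your sketch has several genuine gaps.

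First, the non-singular piece: the pointwise bound $(|f|\ast|\cdot|^\gamma)(v) \lesssim \|f\|_{L^1_{\gamma_+}}\vv^\gamma$ fails when $\gamma<0$. Lemma~\ref{l:convolution-C0} requires an $L^\infty_q$ hypothesis on $f$, not an $L^1$ one, and the two are not interchangeable here: for a concentrated $f$ the convolution $f\ast|\cdot|^\gamma$ blows up near the support of $f$ while $\|f\|_{L^1}$ stays bounded. In the soft-potential range one must instead pass the $|v-w|^\gamma$ singularity onto the product $g\,h$ and use the $H^s$ regularity implicit in $\|g\|_{N^{s,\gamma}}\|h\|_{N^{s,\gamma}}$ to render it integrable; this is nontrivial and is not a ``spirit of Lemma~\ref{l:convolution-C0}'' step.

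Second, the symmetrization of the $Q_s$ piece is not correct as written. Carefully averaging $\iint(g'-g)h\,K_f$ with its $v\leftrightarrow v'$ relabeling produces
$-\tfrac12\iint(g'-g)(h'-h)K_f(v,v')\,\dd v'\dd v + \tfrac12\iint(g'-g)h'\bigl[K_f(v,v')-K_f(v',v)\bigr]\dd v'\dd v$,
and the second term is \emph{not} of the form $C\int(f\ast|\cdot|^\gamma)\,g\,h\,\dd v$: the factors $g',g,h'$ attached to the antisymmetric part of $K_f$ prevent a direct application of the cancellation identity~\eqref{e:cancellation}, which requires integrating $K_f(v,v')-K_f(v',v)$ freely in $v'$.

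Third, and most seriously, even granting the reduction, the one-sided estimate~\eqref{e:proposal-onesided} is precisely the upper bound in Corollary~\ref{c:GS}, which the paper explicitly \emph{derives from} Proposition~\ref{p:trilinear}; so you would need an independent proof, and the one you sketch does not work. Since $|v-v'|\le d(v,v')$ by definition, we have $|v-v'|^{-d-2s}\ge d(v,v')^{-d-2s}$, so the claimed pointwise bound $|K_f(v,v')|\lesssim (\vv\langle v'\rangle)^{(\gamma+2s+1)/2} d(v,v')^{-d-2s}\cdot(\text{hyperplane integral})$ goes the wrong way: in the regime $|v|\sim|v'|\to\infty$, $|v-v'|\to 0$, one has $d(v,v')\approx|v|\,|v-v'|$, so compensating the ratio $(d/|v-v'|)^{d+2s}\approx|v|^{d+2s}$ would require a weight of order at least $\vv^{d+2s}$, whereas $(\gamma+2s+1)/2 < d+2s$ throughout the paper's parameter range. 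In the actual Gressman--Strain argument the upper bound of Corollary~\ref{c:GS} is obtained by a dyadic decomposition in the anisotropic distance that crucially exploits the decay of the hyperplane average of $f$ (the hyperplane $\{v+w:w\perp v'-v\}$ stays far from the origin when $|v|$ is large and $|v-v'|$ is small), not a pointwise kernel comparison. Your outline does not engage with this mechanism, which is the core of the estimate.

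Absent those three repairs, the proposal does not constitute a proof. Given the length and technicality of the correct argument, citing \cite{gressman-strain2011sharp} (as the paper does) is the reasonable choice here.
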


The norm $\|f\|_{L^1_{(\gamma+2s+2)}(\R^d)}$ is an upper bound for the constant of Assumption U in \cite{gressman-strain2011sharp}. One can probably extract a smaller exponent than $\gamma+2s+2$ from carefully inspecting the proof in \cite{gressman-strain2011sharp}. In the coercive estimate of Proposition \ref{p:coercive}, it is easy to see that the factor $\|f\|_{L^1_\gamma}$ suffices, even though it is also stated in terms of Assumption U in \cite{gressman-strain2011sharp}.

The following estimate for the kernel $K_f$ is closely related to Propositions \ref{p:coercive} and \ref{p:trilinear}. A proof of the lower bound can be found explicitly in Appendix A of \cite{imbert-silvestre-whi2020}, and it would also follow from the analysis in \cite{gressman-strain2011sharp}. The upper bound follows from Proposition \ref{p:trilinear} and the expression \eqref{e:Qns} for $Q_{ns}$:

\begin{cor} \label{c:GS}
If $f$ satisfies the assumptions \eqref{e:hydro}, then 
\[ c_0 \|g\|_{\dot N^{s,\gamma}}^2 \leq \iint_{\R^d\times\R^d} |g(v') - g(v)|^2 K_f(v,v') \dd v' \dd v \leq C_1 \|f\|_{L^1_{\gamma+2s+2}} \|g\|_{N^{s,\gamma}}^2,\]
where $c_0>0$ depends on the mass, energy, and entropy bounds for $f$, as in Proposition \ref{p:coercive}.
\end{cor}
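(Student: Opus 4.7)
My plan is to derive an algebraic identity connecting the quadratic form $\iint |g(v')-g(v)|^2 K_f(v,v') \dd v' \dd v$ to the bilinear quantity $\int g\, Q(f,g) \dd v$, so that the upper bound can be read off from Proposition \ref{p:trilinear} and the lower bound from Proposition \ref{p:coercive}, modulo a correction coming from the failure of $K_f$ to be symmetric in its two arguments.

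The identity I have in mind follows by multiplying the definition \eqref{e:Qs} of $Q_s(f,g)$ by $g(v)$, integrating in $v$, and using the pointwise decomposition
\[ g(v)[g(v') - g(v)] = \tfrac12[g(v')^2 - g(v)^2] - \tfrac12 [g(v')-g(v)]^2. \]
In the double integral containing $[g(v')^2 - g(v)^2]$, I relabel $v \leftrightarrow v'$ in the $g(v')^2$ piece, which produces $\iint g(v)^2 [K_f(v',v) - K_f(v,v')] \dd v' \dd v$, and this equals $-C \int g^2 (f\ast|\cdot|^\gamma) \dd v$ by the cancellation lemma \eqref{e:cancellation}. Combining with $Q = Q_s + Q_{ns}$ and the formula \eqref{e:Qns} for $Q_{ns}$, I arrive at
\[ \iint_{\R^d\times\R^d} |g(v')-g(v)|^2 K_f(v,v') \dd v' \dd v = -2\int_{\R^d} g\, Q(f,g) \dd v + C\int_{\R^d} g^2\, (f\ast|\cdot|^\gamma) \dd v. \]

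For the upper bound, Proposition \ref{p:trilinear} applied with $h = g$ immediately gives $|\int g\,Q(f,g) \dd v| \lesssim \|f\|_{L^1_{\gamma+2s+2}} \|g\|_{N^{s,\gamma}}^2$. For the cancellation term, a standard splitting (based on whether $|v-v_*|$ is small or large relative to $\vv$) yields the pointwise bound $(f\ast|\cdot|^\gamma)(v) \lesssim \vv^{\gamma}$, with the implicit constant controlled by an $L^1$-moment of $f$ that is dominated by $\|f\|_{L^1_{\gamma+2s+2}}$. Integrating against $g^2$ and noting that $\|g\|_{L^2_{\gamma/2}} \leq \|g\|_{L^2_{(\gamma+2s)/2}} \leq \|g\|_{N^{s,\gamma}}$ finishes the upper bound.

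The lower bound is the main obstacle. A naive attempt via Proposition \ref{p:coercive} applied to the identity above produces an error term of the form $-2C_0\|f\|_{L^1_\gamma}\|g\|_{L^2_{\gamma/2}}^2$, and the positive cancellation contribution $C\int g^2 (f\ast|\cdot|^\gamma)\dd v$ does not automatically absorb it: doing so requires the pointwise lower bound $(f\ast|\cdot|^\gamma)(v) \gtrsim \vv^\gamma$, which is available under \eqref{e:hydro} but only with constants that must be tracked against those of Proposition \ref{p:coercive}. Rather than carry out this delicate bookkeeping, I would defer to the kernel-level proof given in Appendix A of \cite{imbert-silvestre-whi2020}, which argues directly from the geometric/support properties of $K_f$ via the Carleman representation \eqref{e:Kf}, bypasses Proposition \ref{p:coercive} entirely, and produces the clean inequality $c_0\|g\|_{\dot N^{s,\gamma}}^2 \leq \iint |g(v')-g(v)|^2 K_f(v,v') \dd v' \dd v$ stated in the corollary.
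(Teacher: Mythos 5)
Your proposal matches the paper's approach, which in fact gives almost no detail beyond pointing to Proposition \ref{p:trilinear} and formula \eqref{e:Qns} for the upper bound, and deferring to Appendix A of \cite{imbert-silvestre-whi2020} for the lower bound. Your algebraic identity
\[ \iint_{\R^d\times\R^d} |g(v')-g(v)|^2 K_f \dd v' \dd v = -2\int_{\R^d} g\,Q(f,g)\dd v + C\int_{\R^d} g^2 (f\ast|\cdot|^\gamma)\dd v \]
is exactly the right bridge, and your discussion of why Proposition \ref{p:coercive} does not cleanly give the lower bound without careful constant-tracking is accurate. One small imprecision: for soft potentials $\gamma\le 0$, the pointwise estimate $(f\ast|\cdot|^\gamma)(v)\lesssim\vv^\gamma$ is \emph{not} controlled by $L^1$-moments of $f$ alone (a near-point mass makes the convolution blow up); what saves you here is precisely the hypothesis that $f$ satisfies \eqref{e:hydro}, whose entropy bound rules out concentration and yields the needed pointwise control. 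The resulting constant is therefore honestly a function of the hydrodynamic bounds \eqref{e:hydro} as well as the $L^1_{\gamma+2s+2}$ moment, which is consistent with the corollary's statement but should not be attributed to the $L^1$-moment alone.
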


The following is a commutator estimate in terms of the $N^{s,\gamma}$ norms of \cite{gressman-strain2011sharp}. It is apparently new.

\begin{lemma} \label{l:commutator}
For any $q>0$, and $f,g : \R^d \to \R$, we have
\[ \| \langle v \rangle^q Q(f,g) - Q(f,\langle v \rangle^q g)\|_{L^2_{-\gamma_+/2}(\R^d)} \leq C_q \|f\|_{L^1_{\gamma+2s+2}} \|\langle v \rangle^q g\|_{N^{s,\gamma}}. \]
\end{lemma}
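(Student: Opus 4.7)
The plan is to work in the Carleman form of Section~\ref{s:carleman}. Since $Q_{ns}(f,g) = C(f*|\cdot|^\gamma) g$ acts on $g$ as pointwise multiplication in $v$, it commutes with multiplication by $\langle v\rangle^q$, so only the singular part $Q_s$ contributes to the commutator:
\[
\langle v\rangle^q Q(f,g)(v) - Q(f,\langle v\rangle^q g)(v) = \int_{\R^d} g(v')\bigl[\langle v\rangle^q - \langle v'\rangle^q\bigr] K_f(v,v')\,dv'.
\]
Setting $h := \langle v\rangle^q g$ and $W(v,v') := \langle v\rangle^q/\langle v'\rangle^q - 1$, the goal becomes bounding $T_f h(v) := \int h(v') W(v,v') K_f(v,v')\,dv'$ in $L^2_{-\gamma_+/2}$ by $C_q \|f\|_{L^1_{\gamma+2s+2}} \|h\|_{N^{s,\gamma}}$. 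Note $W(v,v)=0$, and Taylor's theorem gives $|W(v,v')| \lesssim_q |v-v'|/\langle v\rangle$ in the near-diagonal region $\{|v-v'|\leq \langle v\rangle/2\}$.

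The strategy is to write $h(v') = [h(v')-h(v)] + h(v)$, yielding $T_f h = A_f h + B_f h$ with $A_f h(v) := \int [h(v')-h(v)] W K_f\,dv'$ and $B_f h(v) := h(v) \int W K_f\,dv'$ (the latter in principal value). The $A_f h$ piece is treated by pointwise Cauchy--Schwarz: $|A_f h|^2 \leq \bigl(\int [h(v')-h(v)]^2 K_f\,dv'\bigr)\bigl(\int W^2 K_f\,dv'\bigr)$. Combining Corollary~\ref{c:GS} for the first factor with a pointwise bound of the form $\int W^2 K_f\,dv' \leq C_q \langle v\rangle^{\gamma_+}\|f\|_{L^1_{\gamma+2s+2}}$ yields the bound on $A_f h$. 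The $B_f h$ piece requires a pointwise bound of similar size on the multiplier $M(v) := \int W K_f\,dv'$; here the linear Taylor coefficient $-q\langle v\rangle^{-2}(v\cdot w)$ of $W(v,v+w)$ at $w=0$ integrates to zero against $K_f$ thanks to the symmetry $K_f(v,v+w) = K_f(v,v-w)$, leaving only the quadratic remainder near the diagonal.

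To obtain the pointwise kernel estimates, I would split the $v'$-integration into the near-diagonal region, where $|W|\lesssim_q |v-v'|/\langle v\rangle$ combined with Lemma~\ref{l:K_upper_bound} and the convolution bound $\int f(v-w)|w|^{\gamma+2s}\,dw \lesssim \langle v\rangle^{\gamma+2s}\|f\|_{L^1_{\gamma+2s}}$ (valid since $\gamma+2s\geq 0$) gives the desired $\langle v\rangle^{\gamma_+}$-type bound, and the far region $\{|v-v'|>\langle v\rangle/2\}$, further split into the subregion $\langle v'\rangle \geq \langle v\rangle/2$ (where $|W|\leq C_q$ and the tail estimate of Lemma~\ref{l:K_upper_bound} suffices) and the subregion $\langle v'\rangle < \langle v\rangle/2$.

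The hard part is the far-region subregion $\langle v'\rangle < \langle v\rangle/2$, where $|W|$ grows as $\langle v\rangle^q/\langle v'\rangle^q$ while we are allowed only the $q$-independent moment $\|f\|_{L^1_{\gamma+2s+2}}$. In this region $|v-v'|\sim \langle v\rangle$, so $K_f(v,v') \lesssim \langle v\rangle^{-d-2s}\int_{w\perp(v-v')} f(v+w)|w|^{\gamma+2s+1}\,dw$. Writing $v' = v + r\omega$ with $r \in [\langle v\rangle/2, 3\langle v\rangle/2]$ and using the co-area-type identity $\int_{S^{d-1}}\int_{w\perp\omega} F(w)\,dw\,d\omega = c_d \int F(w)|w|^{-1}\,dw$ (after exchanging the hyperplane and angular integrations) converts the awkward hyperplane marginals of $f$ into genuine $L^1$ moments; crucially, the weight $\langle v+r\omega\rangle^{-q}$ coming from $|W|$ decays rapidly as $\omega$ moves away from the thin cap on which $|v+r\omega|$ is small, which combined with the decay factor $\langle v\rangle^{-d-2s}$ from $K_f$ compensates the outer $\langle v\rangle^q$ growth. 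The extra $+2$ in the moment $\|f\|_{L^1_{\gamma+2s+2}}$ (compared to what is required for the near-diagonal analysis) is consumed in this bookkeeping, consistent with the moment appearing in Proposition~\ref{p:trilinear}.
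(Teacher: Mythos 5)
Your proposal follows essentially the same route as the paper: after expanding the commutator via the kernel $K_f$, your $A_f h$ and $B_f h$ are exactly the paper's $I(v)$ and $II(v)$, and the treatment — Cauchy–Schwarz paired with Corollary~\ref{c:GS} for the difference-quotient factor, and pointwise kernel bounds (using the symmetry of $K_f$ to kill the linear term) for the weight factor and the multiplier — is the same. The only notable difference is that you unpack the pointwise estimate on $\int W^2 K_f\,dv'$ (near-diagonal Taylor bound plus Lemma~\ref{l:K_upper_bound}, and a co-area argument in the far region $\langle v'\rangle<\langle v\rangle/2$) in more detail than the paper, which compresses this into a single citation of Corollary~\ref{c:Qsbound}.
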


\begin{proof}
We expand the integral expression for $Q$ in terms of $K_f$ to get
\begin{align*}
\langle v \rangle^q Q(f,g)(v) - Q(f,\langle v \rangle^q g)(v) &= \int_{\R^d} (\langle v \rangle^q - \langle v' \rangle^q) g(v') K_f(v,v') \dd v' \\
&= \int_{\R^d} (\langle v \rangle^q \langle v' \rangle^{-q} - 1) \langle v' \rangle^q g(v') K_f(v,v') \dd v' \\
&= \int_{\R^d} (\langle v \rangle^q \langle v' \rangle^{-q} - 1) (\langle v' \rangle^q g(v') - \langle v \rangle^q g(v)) K_f(v,v') \dd v' \\
&\phantom{=} + \langle v \rangle^q g(v) \int_{\R^d} (\langle v \rangle^q \langle v' \rangle^{-q} - 1) K_f(v,v') \dd v' \\
& =: I(v) + II(v).
\end{align*}

For the first term, we use that $|K_f| \leq K_{|f|}$ and we apply Cauchy-Schwarz.
\begin{align*}
I(v) &:= \int_{\R^d} (\langle v \rangle^q \langle v' \rangle^{-q} - 1) (\langle v' \rangle^q g(v') - \langle v \rangle^q g(v)) K_f(v,v') \dd v' \\
&\leq \left( \int_{\R^d} (\langle v \rangle^q \langle v' \rangle^{-q} - 1)^2 K_{|f|} \dd v' \right)^{1/2} \cdot \left( \int_{\R^d} (\langle v' \rangle^q g(v') - \langle v \rangle^q g(v))^2 K_{|f|} \dd v' \right)^{1/2} .
\end{align*}

Using Corollary \ref{c:Qsbound} with $\varphi = \vv^{-q}$, we observe that
\begin{equation}\label{e:commutator1}
 \int_{\R^d} (\langle v \rangle^q \langle v' \rangle^{-q} - 1)^2 K_{|f|} \dd v' \leq C_q \langle v \rangle^{-2s} (f \ast |v|^{\gamma+2s}) \leq C \langle v \rangle^{\gamma_+} \|f\|_{L^1_{\gamma+2s}} .
 \end{equation}
Therefore
\[ \int_{\R^d} \langle v \rangle^{-\gamma_+} I(v)^2 \dd v \leq C \|f\|_{L^1_{\gamma+2s}} \iint (\langle v' \rangle^q g(v') - \langle v \rangle^q g(v))^2 K_{|f|} \dd v' \dd v.  \]
Using Corollary \ref{c:GS}, we conclude that
\[ \|I\|_{L^2_{-\gamma_+/2}}^2 \leq C \|f\|_{L^1_{\gamma+2s}} \|f\|_{L^1_{\gamma+2s+2}} \|g\|_{N^{s,\gamma}}^2.\]

This takes care of the first term $I(v)$. Let us now analyze the second one: $II(v)$. This is a lower order term. Indeed, a crude estimate similar to \eqref{e:commutator1} tells us that
\[ \left\vert \int_{\R^d} (\langle v \rangle^q \langle v' \rangle^{-q} - 1) K_f(v,v') \dd v' \right\vert \leq C \|f\|_{L^1_{\gamma+2s}} \langle v \rangle^{\gamma_+}. \]
We get the pointwise estimate $|II(v)| \leq  C \|f\|_{L^1_{\gamma+2s}} \langle v \rangle^{q+\gamma_+} g(v)$. From this, we get
\[ |II(v)|_{L^2_{\gamma_+/2}} \leq C \|f\|_{L^1_{\gamma+2s}} \|\langle v \rangle^q g\|_{L^2_{\gamma_+/2}}.\]
This takes care of the second term.
\end{proof}

Finally, we have a simple interpolation lemma that allows us to trade decay for regularity. The proof is the same as \cite[Lemma 2.6]{henderson2020polynomial}.
\begin{lemma}\label{l:moment_interpolation}
	Fix $n \geq 0$ and $m \geq 0$.  Suppose that $f \in L^\infty_m\cap H^k_n(\R^d)$ and $k' \in (0,k)$.  Then if $\ell < (m-d/2)(1- k'/k) + n(k'/k)$, we have
	\[
		\|f\|_{H^{k'}_\ell(\R^d)}
			\lesssim \|f\|_{L^\infty_m(\R^d)}^{1 - \frac{k'}{k}} \|f\|_{H^k_n(\R^d)}^{\frac{k'}{k}} \lesssim_{k',k} \|f\|_{L^\infty_m(\R^d)} + \|f\|_{H^k_n(\R^d)}.
	\]
\end{lemma}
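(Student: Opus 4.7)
The plan is to reduce the desired estimate to a pure $L^2$–$H^k$ weighted interpolation by first converting the $L^\infty_m$ control into weighted $L^2$ control. Set $\theta := k'/k \in (0,1)$. Since the condition on $\ell$ is strict, I can choose $\delta > 0$ small enough that
\[ \ell \leq (m - d/2 - \delta)(1 - \theta) + n \theta. \]
Now observe that pointwise
\[ |\vv^{m - d/2 - \delta} f(v)|^2 \leq \vv^{-d - 2\delta}\, \|f\|_{L^\infty_m(\R^d)}^2, \]
and integrating over $\R^d$ yields $\|f\|_{L^2_{m-d/2-\delta}(\R^d)} \lesssim_\delta \|f\|_{L^\infty_m(\R^d)}$. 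With $a := m - d/2 - \delta$, the lemma is therefore reduced to the weighted Gagliardo–Nirenberg-type interpolation
\[ \|f\|_{H^{k'}_\ell(\R^d)} \lesssim \|f\|_{L^2_a(\R^d)}^{1-\theta} \|f\|_{H^k_n(\R^d)}^{\theta}, \qquad \ell = a(1-\theta) + n\theta. \]

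For this weighted interpolation I would use complex interpolation on the Hilbert scale $\{H^j_q(\R^d)\}_{(j,q)}$. The operator $T_{j,q} := (1-\Delta)^{j/2} \vv^q$ identifies $H^j_q$ isometrically with $L^2$, so Stein's interpolation theorem applied to the analytic family $z \mapsto T_{(1-z)\cdot 0 + z k,\, (1-z)a + z n}$ gives the stated inequality at $z = \theta$, producing exactly the weight $\ell$ and the Sobolev index $k'$ on the left-hand side. A more elementary alternative, if one wants to avoid complex interpolation, is to expand $\partial^\alpha(\vv^\ell f)$ by the Leibniz rule into terms of the form $(\partial^\beta \vv^\ell)(\partial^{\alpha-\beta} f)$ with $|\partial^\beta \vv^\ell| \lesssim \vv^{\ell - |\beta|}$, and then apply the classical (unweighted) Gagliardo–Nirenberg inequality to the products $\vv^{\ell-|\beta|} f$, bounding the mixed norms inductively on $k'$.

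The main obstacle I anticipate is the clean bookkeeping of weights versus derivatives. In the Leibniz-rule approach, the intermediate terms are mixed weighted Sobolev norms of $f$ with exponents lying between the endpoints, and these must themselves be handled by a nested interpolation — which forces an induction on $k'$ and a careful check that all exponents lie in the allowed range. The complex interpolation route bypasses this cleanly, but requires justifying that $\{H^j_q\}_{(j,q)}$ really does form a complex interpolation scale with respect to the two parameters simultaneously, which follows from the standard theory once one writes $H^j_q = T_{j,q}^{-1} L^2$.

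Finally, the second inequality in the statement is an immediate consequence of the first via the generalized Young inequality $X^{1-\theta} Y^{\theta} \leq (1-\theta) X + \theta Y \leq X + Y$, applied with $X = \|f\|_{L^\infty_m}$ and $Y = \|f\|_{H^k_n}$, yielding the form $\lesssim_{k',k} \|f\|_{L^\infty_m} + \|f\|_{H^k_n}$ claimed in the lemma.
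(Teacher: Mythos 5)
Your reduction is correct and matches the structure of the argument the paper relies on: the paper gives no proof here and defers entirely to \cite[Lemma~2.6]{henderson2020polynomial}, and the first two steps there are the same as yours --- use the strict inequality on $\ell$ to absorb a margin $\delta>0$, convert the $L^\infty_m$ control into weighted $L^2_a$ control with $a=m-d/2-\delta$ via the integrable factor $\vv^{-d-2\delta}$, and thereby reduce to a two-parameter interpolation $\|f\|_{H^{k'}_\ell}\lesssim\|f\|_{L^2_a}^{1-\theta}\|f\|_{H^k_n}^\theta$ with $\ell=a(1-\theta)+n\theta$, $k'=\theta k$. The final step (Young's inequality for the second inequality in the display) is also correct. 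Where you and the cited source diverge is in the mechanism for the core two-parameter interpolation: \cite{henderson2020polynomial} carries it out by direct estimation --- Leibniz on $\partial^\alpha(\vv^\ell f)$, pointwise bounds $|\partial^\beta\vv^\ell|\lesssim\vv^{\ell-|\beta|}$, and the classical Gagliardo--Nirenberg inequalities (with integration-by-parts tricks to tame the cross terms) --- which is the "more elementary alternative" you sketch as a fallback. Your primary route via Stein interpolation on the operator family $T_z=(1-\Delta)^{zk/2}\vv^{(1-z)a+zn}$ is clean and also works, but the one-line invocation hides a small but necessary verification: boundedness on the line $\Re z=0$ is trivial ($T_{i\tau}$ composes two unitaries with $\vv^a$), but on $\Re z=1$ you must check that the imaginary-power weight $\vv^{i\tau(n-a)}$ is a bounded multiplier on $H^k$, with norm growing at most polynomially in $\tau$ (indeed $\lesssim(1+|\tau|)^k$ by Leibniz and $|\partial^\beta\vv^{i\tau c}|\lesssim(1+|\tau|)^{|\beta|}\vv^{-|\beta|}$); this is what gives admissible growth for Stein's theorem. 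With that point supplied, your complex-interpolation argument is complete, and arguably more systematic than the elementary computation, at the cost of invoking heavier machinery and the identification $[L^2_a,H^k_n]_\theta=H^{k'}_\ell$.
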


\section{Short-time existence} 
\label{s:short-time}

We need a short-time existence theorem that allows initial data to decay only polynomially in $v$ (rather than exponential or Gaussian decay). This was established in \cite{morimoto2015polynomial} for $s\in (0,\frac 1 2)$ and $\gamma \in (-\frac 3 2, 0]$, and in \cite{henderson2020polynomial} for $s\in (0,1)$ and $\gamma \in (\max\{-3,-\frac 3 2 - 2s\},0)$, but these results do not apply to the case $\gamma >0$.
Here, we provide a relatively quick proof of short-time existence when the initial data is near a Maxwellian, that applies both for $\gamma\leq 0$ and $\gamma > 0$.

\begin{prop} \label{p:short_time}
There exists a sufficiently large $q > 0$,  depending on $\gamma$ and $s$,  so that for any $\eps>0$ and any $T>0$, there exists a $\delta>0$ so that if $|f_0| \leq \delta \langle v \rangle^{-q}$, then there exists some bounded classical solution $f : [0,T] \times \T^3 \times \R^3 \to \R$ to \eqref{e:boltzmann-perturbative} that agrees with the initial data $f_0$ in the sense described in Section \ref{s:weak_solutions}, and such that
\[ |f(t,x,v)| \leq \eps \langle v \rangle^{-q}, \text{ for all } t,x,v \in [0,T] \times \T^3 \times \R^3.\]

Moreover, for $\gamma \leq 0$, if $|f_0| \leq C_0 \langle v \rangle^{- \tilde q}$ for any $\tilde q > q$ and $C_0 > 0$, then also $|f(t,x,v)| \leq C_1 \langle v \rangle^{- \tilde q}$ for some $C_1$ depending on $C_0$, $\tilde q$ and the parameters of the equation only.

Also, if $\gamma > 0$, then $|f(t,x,v)| \leq C_p(t) \langle v \rangle^{-p}$ for all $p > 0$, for some function $C_p(t)$ depending on $p$ and the parameters of the equation only.

Even furthermore, if $f_0 \in H^k_{q_1}(\T^3\times\R^3) \cap L^\infty_{\tilde q}(\T^3\times\R^3)$ for some $k \geq 4$, $q_1>0$, and $\tilde q>0$ large depending on $q_1$, then also $f(t,\cdot,\cdot) \in H^k_{q_1}(\T^3\times \R^3)$ for all $t \in [0,T]$.
\end{prop}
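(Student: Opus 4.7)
The plan is to construct the solution by a regularize-then-pass-to-the-limit scheme. First mollify and velocity-truncate $f_0$ to produce Schwartz-class approximations $f_0^\eta$, then invoke a short-time existence result for smooth data (for instance patching the construction in \cite{he2012homogeneous-boltzmann} to the inhomogeneous setting via the transport term) to obtain smooth solutions $f^\eta$ on some interval $[0, T^\eta]$. The heart of the matter is a uniform-in-$\eta$ pointwise \emph{a priori} bound $|f^\eta(t,x,v)| \leq \eps \vv^{-q}$ on $[0, T] \times \T^3 \times \R^3$, which forces $T^\eta \geq T$, supplies enough compactness to extract a limit $f$ solving \eqref{e:boltzmann-perturbative} in the sense of Section \ref{s:weak_solutions}, and directly yields the claimed bound for $f$.

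For the \emph{a priori} estimate, I would compare $f$ with a time-dependent barrier $\Phi(t, v) = \delta(t) \vv^{-q}$ and show $|f| \leq \Phi$ propagates as long as $\delta(t)$ obeys a suitable ODE. At a first touching point $(t_0, \bar x, \bar v)$ of $f$ with $\Phi$ (WLOG $f = +\Phi$ at that point, the other sign being symmetric), the interior-max conditions yield $v \cdot \nabla_x (f - \Phi) = 0$ and $\partial_t (f - \Phi) \geq 0$, so
\[ \dot\delta(t_0)\, \langle \bar v \rangle^{-q} \leq \bigl[ Q_s(M+f, f) + Q_{ns}(M+f, f) + Q(f, M) \bigr]\bigl(t_0, \bar x, \bar v\bigr). \]
I would bound the right-hand side using the Carleman decomposition of Section \ref{s:carleman}: $Q_{ns}(M+f, f)(\bar v)$ supplies a damping of order $-c \langle \bar v \rangle^\gamma \Phi$ (once $\eps$ is small enough that $M + f \geq cM$, so that $(M+f)*|\cdot|^\gamma \gtrsim \langle \bar v \rangle^\gamma$); $Q_s(M+f, f)(\bar v)$ is controlled via the pointwise bound $|f(v')| \leq \delta(t_0) \langle v' \rangle^{-q}$ together with the kernel estimates of Lemma \ref{l:K_upper_bound}, the integro-differential operator bound of Lemma \ref{l:pointwise_operator_bound} applied to $\langle \cdot \rangle^{-q}$ at $\bar v$, and the convolution estimate of Lemma \ref{l:convolution-C0}; and $\vv^q Q(f, M)(\bar v)$ is smaller still thanks to the Gaussian decay of $M$, via Corollary \ref{c:Qsbound}. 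Combining these contributions yields a differential inequality of the form $\dot\delta \leq C(1+\delta)\delta$, so by Gronwall, choosing $\delta(0) = \delta$ small enough in terms of $T$ and $\eps$ guarantees $\delta(t) \leq \eps$ on $[0, T]$, which is the desired bound.

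The further statements are variations on the same theme. Propagation of higher polynomial decay for $\gamma \leq 0$ follows by rerunning the barrier argument with weight $\vv^{\tilde q}$ in place of $\vv^q$: the already-established $L^\infty_q$ bound on $f$ linearizes the right-hand side in the new barrier size $\delta_{\tilde q}(t) := \|\vv^{\tilde q} f(t,\cdot,\cdot)\|_\infty$, and Gronwall closes. For $\gamma > 0$, arbitrary polynomial decay at positive times follows from the same inequality with the stronger $-c \vv^\gamma$ damping (now $\gamma > 0$), which permits barriers of the form $\phi_p(t, v) = A_p(t) \vv^{-p}$ with $A_p(0^+) = +\infty$ but $A_p(t) < \infty$ for $t > 0$, a smoothing effect in the spirit of \cite{imbert-mouhot-silvestre-decay2020}. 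Finally, propagation of $H^k_{q_1}$ regularity for $k \geq 4$ uses a standard weighted energy method: differentiate \eqref{e:boltzmann-perturbative} in $(x, v)$, test against $\vv^{2q_1} \partial^\alpha f$, and close via Proposition \ref{p:coercive} (coercivity), Proposition \ref{p:trilinear} and Lemma \ref{l:commutator} (for the commutators with $\partial^\alpha$ and for moving $\vv^{q_1}$ through $Q$), and Sobolev embedding $H^k(\T^3 \times \R^3) \hookrightarrow L^\infty$ for $k \geq 4$ (complemented by the $L^\infty_{\tilde q}$ bound when needed) to obtain a Gronwall inequality for $\|f\|_{H^k_{q_1}}^2$.

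The main obstacle is the pointwise maximum-principle step. One must ensure that the damping $-c \langle \bar v \rangle^\gamma \Phi$ from $Q_{ns}$ is not overwhelmed by the positive contributions from $Q_s(M+f, f)$ and $Q(f, M)$ at the contact point, uniformly in $\bar v$. This forces delicate use of the cancellation of $\vv^q \langle v' \rangle^{-q} - 1$ near $v' = \bar v$: although the difference is a priori only $O(|v' - \bar v|)$, its first-order part vanishes against the symmetric kernel $K_{M+f}(\bar v, \cdot)$, leaving an effective $O(|v' - \bar v|^2)$ behavior that is integrable against the singular part of $K_{M+f}$. A secondary technical issue is choosing $q$ large enough that all the convolutions $f * |\cdot|^\gamma$ and $f * |\cdot|^{\gamma+2s}$ appearing in the Carleman bounds are integrable via Lemma \ref{l:convolution-C0}.
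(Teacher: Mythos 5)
The proposal follows the same general outline as the paper (construct approximate solutions, propagate a weighted $L^\infty$ barrier at first-crossing points, propagate Sobolev regularity, pass to the limit), but there is a fundamental error in the maximum-principle step that makes the argument break down exactly in the case the paper cares most about, $\gamma > 0$.

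You assert that the damping at a contact point comes from $Q_{ns}(M+f,f)(\bar v) = C\,[(M+f)\ast |\cdot|^\gamma](\bar v)\, f(\bar v)$, and you invoke $M+f \gtrsim M$ to get a lower bound $\gtrsim \vv^\gamma$. But the constant $C$ in \eqref{e:Qns} is \emph{positive}: at a contact point where $f(\bar v) = \Phi(\bar v) > 0$ and $M+f \geq 0$, the term $Q_{ns}(M+f,f)(\bar v)$ is a \emph{positive} contribution of order $+\vv^\gamma \Phi(\bar v)$ that must be beaten, not a damping term. (One can read this off from the paper's estimate at the contact point, where the convolution terms appear with a plus sign and are dominated by the negative term $-cq^s|\bar v|^\gamma g(\bar v)$, and also from the proof of Lemma~\ref{l:magic_negative_term_2}, where the analogous quantity $I = \int C (f\ast|\cdot|^\gamma)\varphi^2\vv^{2q}\,dv$ must be absorbed by the coercive negative term.) The actual damping in the paper comes from a completely different place: the decomposition $Q_s(M+f,g) = \mathcal G + \mathcal B$, where $\mathcal G$ is the contribution of $Q_s$ restricted to the \emph{small}-velocity region $\{\langle v\rangle < c_1(q)|\bar v|\}$. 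For $g = \vv^{-q}$, the jump $g(v')-g(\bar v)$ is strongly negative for those $v'$ that are accessible from the bulk of the mass of $M+f$, and this gives $\mathcal G \lesssim -c(1+q)^s|\bar v|^\gamma g(\bar v)$ (Lemma~\ref{l:good-term}, imported from \cite{imbert-mouhot-silvestre-decay2020}). Crucially, the prefactor grows like $q^s$, so choosing $q$ large kills the $O(|\bar v|^\gamma)$ positive contributions from $Q_{ns}$. Without this mechanism, for $\gamma>0$ your differential inequality at the contact point would read $\dot\delta \lesssim |\bar v|^\gamma \delta$ with no uniform bound in $\bar v$, and Gronwall cannot close. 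Your observation about the cancellation of $\vv^q\langle v'\rangle^{-q}-1$ to second order against the symmetric kernel is true, but it only controls the error term (the paper's $\mathcal B$, split as $\mathcal B_1 + \mathcal B_2$); it does not produce the negative sign you need.

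A second, more minor, gap: you treat ``patching the construction in \cite{he2012homogeneous-boltzmann} to the inhomogeneous setting via the transport term'' as if it were a citation, but there is no such result to cite; this is precisely the construction the paper has to build. The paper does it by operator splitting on a time grid, alternating space-homogeneous Boltzmann steps (solved via \cite{he2012homogeneous-boltzmann} for each fixed $x$) with free transport steps, plus a crucial $x$-mollification at each transition to keep $H^3$ regularity in $x$ for the next homogeneous step; and then it has to check the mollification error is $O(h^2)$, uniformly control weighted Sobolev norms of the iterates over a time interval independent of $h$ (Lemma~\ref{l:sobolev-propagation}, which itself needs a nontrivial coercivity Lemma~\ref{l:magic_negative_term_2} and moment-swapping Lemma~\ref{l:moments}), and pass to the limit $h\to 0$. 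Your sketch of the $H^k_{q_1}$-propagation via the trilinear and commutator estimates is in the right direction, but it is needed already at the level of constructing the approximations, not merely as an add-on.
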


We emphasize that the time $T$ of existence in Proposition \ref{p:short_time} depends on $\| \langle v \rangle^q f_0 \|_{L^\infty} < \delta$ only. It is unaffected by any of the other norms of $f_0$.

This section is devoted to the proof of Proposition \ref{p:short_time}. Like most proofs of short-time existence results, part of the proof consists in establishing certain estimates that persist for some period of time, depending on the size of the initial data. We also need to devise some procedure to build the solution. We write an approximate problem whose solution can be easily constructed using results from the literature. We prove that the solutions to this approximate problem satisfy our estimates uniformly and pass to the limit.

\subsection{The construction of an approximate problem}\label{s:approximate_problem}

Let us start by describing our approximate problem. First, we consider the case that $f_0$ is $C^\infty$ and decays faster than any polynomial as $|v|\to \infty$. Later, we will analyze the general case by a standard approximation procedure.

Let $h>0$ be a small parameter. We will ultimately take $h \to 0$ to construct a solution of \eqref{e:boltzmann-perturbative}. It is convenient to take $h = T/N$ for some large integer $N \in \mathbb N$. We construct an approximate solution $f^h$ of \eqref{e:boltzmann-perturbative} as follows.

We partition the interval $[0,T]$ in subintervals
\[ [0,T) = [t_0=0,t_1) \cup [t_1,t_2) \cup \dots [t_{N-1}, t_N = T).\]
We choose this partition so that $t_i - t_{i-1} = h$ for all $i =1,2,\dots,N$. 

We divide these subintervals in two sets:
\begin{align*}
\mathcal D := [0,t_1) \cup [t_2,t_3) \cup [t_4,t_5) \cup [t_6,t_7) \cup \cdots = \bigcup_{i \text{ odd}} [t_{i-1},t_i), \\
\mathcal T := [t_1,t_2) \cup [t_3,t_4) \cup [t_5,t_6) \cup [t_7,t_8) \cup\cdots = \bigcup_{i \text{ even}} [t_{i-1},t_i).
\end{align*}

In $\mathcal D$, we let $f^h$ solve the space homogeneous Boltzmann equation for each fixed value of $x$. More precisely,
\begin{equation} \label{e:space-homogeneous-part}
\partial_t f^h = 2 Q(M+f^h, M+f^h), \quad t\in [t_{i-1},t_i), i \text{ odd},
\end{equation}
with $f^h(0,x,v) = f_0(x,v)$

This equation is solvable thanks to the result in \cite{he2012homogeneous-boltzmann}, which we quote here:
\begin{thm}{\cite[Theorem 1.1(1)]{he2012homogeneous-boltzmann}}
If $\gamma+2s\geq 0$,  and $f_0 \in L^1_r \cap H^N(\R^3)$ for some $N\geq 3$ and $r>0$ sufficiently large depending only on $N$, then the space homogeneous Boltzmann equation
\[ \partial_t g = Q(g,g)\]
admits a global, unique solution satisfying $g \in C([0,T], H^N(\R^3)) \cap L^\infty([0,T], L^1_r \cap L\log L(\R^3))$ for any $T<\infty$.
\end{thm}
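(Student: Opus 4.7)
The plan is to establish the quoted theorem by (i) deriving closed \emph{a priori} estimates in $L^1_r \cap L\log L \cap H^N$ for smooth solutions, (ii) constructing approximate solutions via an angular cutoff, (iii) passing to the limit by compactness, and (iv) proving uniqueness via an $L^2$-difference estimate. Since it is the spatially homogeneous equation, $x$ plays no role and all integrals are over $\R^3_v$ only.

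\textbf{Step 1 (zeroth-order estimates).} Conservation of mass and energy give $\|g(t)\|_{L^1_2} = \|g_0\|_{L^1_2}$. Higher $L^1_r$ moments propagate via a Povzner-type inequality: multiplying $\partial_t g = Q(g,g)$ by $\langle v \rangle^r$, integrating, and using the angular averaging on the sphere produces a negative contribution on the right which, under $\gamma + 2s \geq 0$, dominates the positive part for $r$ large enough. Boltzmann's $H$-theorem gives $\int g \log g \dd v \leq \int g_0 \log g_0 \dd v$. Together these imply that the hydrodynamic bounds \eqref{e:hydro} hold uniformly in $t$, so the coercivity constant in Proposition \ref{p:coercive} can be taken uniform.

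\textbf{Step 2 ($H^N$ propagation).} Differentiate the equation, use Leibniz to write $\partial_t \partial^\alpha g = \sum_{\alpha_1+\alpha_2=\alpha} Q(\partial^{\alpha_1} g, \partial^{\alpha_2} g)$, test against $\partial^\alpha g$, and sum over $|\alpha|\leq N$. The diagonal term $\int Q(g,\partial^\alpha g)\partial^\alpha g \dd v$ is bounded above by $-c_0 \|\partial^\alpha g\|_{\dot N^{s,\gamma}}^2 + C \|g\|_{L^1_\gamma} \|\partial^\alpha g\|_{L^2_{\gamma/2}}^2$ via Proposition \ref{p:coercive}; the remaining off-diagonal terms are controlled by Proposition \ref{p:trilinear} as
\[
\Bigl|\int Q(\partial^{\alpha_1} g, \partial^{\alpha_2} g) \partial^\alpha g \dd v\Bigr| \lesssim \|\partial^{\alpha_1} g\|_{L^1_{\gamma+2s+2}} \|\partial^{\alpha_2} g\|_{N^{s,\gamma}} \|\partial^\alpha g\|_{N^{s,\gamma}},
\]
where the $L^1_{\gamma+2s+2}$ factor is absorbed by Sobolev embedding ($H^2 \hookrightarrow L^\infty$ in $\R^3$) plus the moment bound from Step 1, provided $r$ is chosen sufficiently large in terms of $N$. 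Cauchy–Schwarz puts half of the $\|\partial^\alpha g\|_{N^{s,\gamma}}$ into the coercive term, yielding a Gronwall inequality for $\|g\|_{H^N}^2$.

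\textbf{Step 3 (construction).} Replace $b$ by $b_n := b \wedge n$; the resulting cutoff kernel satisfies Grad's assumption, and for each $n$ classical Arkeryd theory produces a global unique solution $g_n \in C([0,T]; L^1_r \cap H^N)$ with $g_n \geq 0$. The estimates of Steps 1–2 never used the singularity of $b$, only the two-sided bounds \eqref{e:collision-kernel-assumptions}, so they hold uniformly in $n$ (the coercivity constant of Proposition \ref{p:coercive} for the truncated kernel is uniform in $n$ once $n$ is large enough that $b_n$ agrees with $b$ on a fixed angular region of positive measure). Using the equation one bounds $\partial_t g_n$ in a negative Sobolev space, and Aubin–Lions gives strong convergence of a subsequence in $L^2_{\mathrm{loc}}$; combined with the weighted bounds, this lets one pass to the limit in $\int Q_{b_n}(g_n, g_n) \varphi \dd v$ for every test function $\varphi$.

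\textbf{Step 4 (uniqueness) and main obstacle.} The difference $h := g_1 - g_2$ of two solutions solves $\partial_t h = Q(g_1, h) + Q(h, g_2)$; testing against a suitably weighted version of $h$, applying Proposition \ref{p:coercive} to the first term and Proposition \ref{p:trilinear} to the second, and using the uniform $L^1_r \cap H^N$ bounds yields $\tfrac{d}{dt}\|h\|_{L^2}^2 \lesssim \|h\|_{L^2}^2$ so that $h \equiv 0$ by Gronwall. The main technical obstacle is the closed-loop Sobolev estimate in Step 2: the trilinear bound demands $L^1_{\gamma+2s+2}$ on one factor and $N^{s,\gamma}$ on the other two, and the singular angular part must be fully absorbed into the coercivity rather than into a lower-order perturbation. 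This is precisely what dictates the condition $\gamma + 2s \geq 0$ (so that the coercive norm controls the trilinear loss) and the requirement that $r$ be taken large depending on $N$.
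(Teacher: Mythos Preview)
The paper does not prove this statement at all: it is quoted verbatim from \cite{he2012homogeneous-boltzmann} as an external black box, introduced by ``This equation is solvable thanks to the result in \cite{he2012homogeneous-boltzmann}, which we quote here,'' and then immediately applied (with $N=3$) to build the space-homogeneous pieces of the approximate solution $f^h$. There is therefore no ``paper's own proof'' to compare your proposal against; you have attempted to reprove a result that the paper only cites.

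That said, your sketch is a reasonable outline of how such a theorem is established, and is broadly in the spirit of \cite{he2012homogeneous-boltzmann}. Two places would need more care if you were to turn it into an actual proof. First, in Step~2 the coercive term from Proposition~\ref{p:coercive} controls only the \emph{seminorm} $\|\partial^\alpha g\|_{\dot N^{s,\gamma}}$, while the trilinear bound produces the full norm $\|\partial^\alpha g\|_{N^{s,\gamma}}$ on the right; the difference is a weighted $L^2_{(\gamma+2s)/2}$ piece that is not absorbed by the coercivity and must instead be handled by interpolation between the $L^1_r$ moment bound and $H^N$ (this, rather than Sobolev embedding alone, is where the largeness of $r$ relative to $N$ is really used). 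Second, in Step~3 the claim that the coercivity constant for the truncated kernels $b_n$ is uniform in $n$ is correct in spirit but Proposition~\ref{p:coercive} as stated is for the full non-cutoff kernel; one needs to check that the lower bound survives truncation, which it does once $b_n$ agrees with $b$ on a fixed cone of positive measure, as you indicate.
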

We apply this theorem with $N=3$, and our solution $f^h$ is obtained from $g$ by subtracting off $M(v)$ and rescaling time to address the factor of $2$ in \eqref{e:space-homogeneous-part}. Therefore, we require the initial data $f^h(t_{i-1},x,\cdot)$ to belong to the space $L^1_r \cap H^3(\R^3)$ for some $r$ sufficiently large, for each value of $x$. 

We assume that $f_0$ is smooth and rapidly decaying for large velocities. Thus, the assumptions to apply the result from \cite{he2012homogeneous-boltzmann} hold at $i=1$. We have to make sure that these assumptions will also hold for later values of $i$.

The solutions obtained by solving the space-homogeneous problem are smooth and decay in $v$ faster than any polynomial. Note also that $M+f^h \geq 0$ for all $h>0$. There is no easily applicable result that guarantees that the solution $f^h$ will be smooth with respect to $x$. Because of that, at each $t_i$ for $i\geq 1$ odd, we replace $f(t_i,x,v)$ with a mollification in space:
\begin{equation} \label{e:mollification}
f^h_i(x,v) := \int_{\R^3} f^h(t_i,x-y,v)h^{-3} \chi(y/h) \dd y,
\end{equation}
where $\chi$ is a smooth, even, nonnegative function supported in $B_1(0)$, with $\int_{B_1(0)} \chi = 1$. The functions $f_i^h$ will be smooth both in $x$ and $v$, and rapidly decaying as $|v| \to \infty$.

In each interval $[t_{i-1},t_i) \subset \mathcal T$, with $i$ even, we let $f^h$ solve the transport equation with initial data $f_{i-1}^h$ as in \eqref{e:mollification}. That is,
\[ \begin{cases} \partial_t f^h = -2 v\cdot \nabla_x f^h, & t\in [t_{i-1},t_i), \, i \text{ even}\\ f^h(t_{i-1},x,v) = f_{i-1}^h(x,v).\end{cases}\]

The transport equation preserves the smoothness of the function $f$ in $\mathcal T$. Thus, the initial data for solving problem \eqref{e:space-homogeneous-part} is always smooth in all variables and the result from \cite{he2012homogeneous-boltzmann} will be applicable for every subinterval in $\mathcal D$.

Note that by the mollification introduced in \eqref{e:mollification}, the function $f^h$ will have a jump discontinuity with respect to time at every $t_i$ with $i$ odd. The scaling of the mollification is chosen so that its quantitative impact on the solution disappears as $h\to 0$, as we demonstrate below. It is convenient for our analysis below to make $f^h$ be left continuous, that is $f^h(t_i,x,v) = \lim_{t \to t_i^-} f^h(t,x,v)$ for every $i$ odd.

\subsection{Uniform upper bounds}
\label{s:upper_bounds}

Now that we have constructed the approximate solution $f^h$ for every $h>0$, it is time to obtain some estimates that are independent of $h$. In this section, we obtain estimates in weighted $L^\infty$ following the ideas from \cite{imbert-mouhot-silvestre-decay2020}. 

In this subsection, since the analysis applies in arbitrary dimension, we prove our estimates for functions defined on $\R^d$ for general $d\geq 2$. The construction of $f^h$ in the previous subsection requires $d=3$ because of the application of \cite{he2012homogeneous-boltzmann}.

Let $g : \R^d \to (0,2]$ be a smooth function satifying 
\begin{equation}\label{e:g-def}
g(v) = |v|^{-q} \, \text{ for } |v| > 1, \qquad g(v) \geq \vv^{-q} \, \text{ in } \R^d.
\end{equation}
We use this definition for $g$ during this whole subsection. Obviously $g(v) \approx \langle v \rangle^{-q}$. We use the function $g(v)$ instead of $\langle v \rangle^{-q}$ because we want to use some computations from \cite{imbert-mouhot-silvestre-decay2020} that apply to those functions. Choosing $g$ such that $g(v) \geq \vv^{-q}$ is done for technical convenience.

We start with a few lemmas that involve various upper bounds for different parts of the Boltzmann collision operator. The first one  involves the quantity $Q_s(M+f^h, g)$, evaluated at some given point $(\bar t, \bar x, \bar v)$. Following \cite{imbert-mouhot-silvestre-decay2020}, we split this quantity as $Q_s(M+f^h, g) = \mathcal G + \mathcal B$. For $c_1(q) = q^{-1}/20$, we define
\begin{align}
\mathcal G &:= Q_s(\one_{\langle v \rangle < c_1(q)|\bar v|}(M+f^h),g)(\bar v), \label{e:G}\\ 
\mathcal B &:= Q_s(\one_{\langle v \rangle \geq c_1(q)|\bar v|} (M+f^h),g)(\bar v), \label{e:B}
\end{align}

The point of this decomposition is that, as we will see, $\mathcal G < 0$, and $\mathcal B$ is an error term that must be estimated from above.

Note that for any value of $w \in \R^d$, $\langle w \rangle \geq 1$. Thus, we have $Q_s(M+f^h, g)(\bar v) = \mathcal B$ when $|\bar v|$ is sufficiently small. The decomposition is only useful to study large values of $|\bar v|$.

\begin{lemma}\label{l:good-term}
Assume $|f^h(v)| \leq g(v)/2$ for all $v\in\R^d$, and $M+f^h \geq 0$. There exists $R_q>0$ depending on $q$, such that the term $\mathcal G$ defined by \eqref{e:G} satisfies
\[\mathcal G \leq -c (1+q)^s |\bar v|^\gamma g(\bar v),\]
if $|\bar v|>R_q$. The constant $c>0$ is independent of $q$.
\end{lemma}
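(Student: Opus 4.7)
The plan follows Imbert-Mouhot-Silvestre's decay analysis in \cite{imbert-mouhot-silvestre-decay2020}. The geometric heart is that on the support of $F := \one_{\langle v\rangle < c_1(q)|\bar v|}(M+f^h)$, the partner velocity $v_*$ is small compared to $\bar v$, so the post-collisional velocity $v'$ satisfies $|v'| \geq |\bar v|$ (approximately), forcing $g(v') \leq g(\bar v)$. This makes $\mathcal G \leq 0$ automatically; the content of the lemma is the quantitative lower bound on $|\mathcal G|$.

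First I would rewrite $\mathcal G$ in Carleman form using \eqref{e:Kf}, with $u = v' - \bar v$ and $v_* = \bar v + w$ on the hyperplane $\bar v + u^\perp$. Since this hyperplane is unchanged under $u \mapsto -u$, the kernel $K_F(\bar v, \bar v + u)$ is an even function of $u$, so
\[
\mathcal G = \tfrac{1}{2}\int_{\R^d} \Delta(u)\, K_F(\bar v, \bar v+u)\dd u, \qquad \Delta(u) := g(\bar v+u) + g(\bar v-u) - 2g(\bar v),
\]
and the integrand is of order $|u|^2$ near $u=0$, hence absolutely integrable. Next I would lower bound the slice integral $m(\omega) := \int_{\bar v+\omega^\perp} F \dd v_*$ for directions $\omega$ in the strip $\{|\bar v \cdot \omega| \in [1,2]\}$: on such a slice $|v_*|\geq 1$, so $|f^h(v_*)|\leq |v_*|^{-q}/2$, giving a slice integral $\leq C q^{-(d-1)/2}$ that is negligible for $q$ large; meanwhile $\int_{\bar v+\omega^\perp} M \dd v_* = (2\pi)^{-1/2}e^{-d_s^2/2}\geq c_M>0$ with $d_s := |\bar v \cdot \omega|$. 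Hence $m(\omega)\geq c_M/2$, using also that $R_q \gtrsim q$ puts the Maxwellian's effective support inside the truncation ball and makes $|v_*-\bar v|\approx|\bar v|$ throughout, so $K_F(\bar v, \bar v+u) \gtrsim |\bar v|^{\gamma+2s+1} m(\omega) |u|^{-d-2s}$ on the strip.

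For $\omega$ in the strip, $|v'|^2 = |\bar v|^2 + 2r(\bar v\cdot\omega) + r^2$ with $|\bar v\cdot\omega|\leq 2$, so Taylor expansion gives $\Delta(u) \approx 2g(\bar v)[(1+r^2/|\bar v|^2)^{-q/2} - 1]$ up to a positive correction of order $(q^2/|\bar v|^2)g(\bar v)$ coming from the parallel component of $u$; this correction is absorbed provided $|\bar v|^2 \gtrsim q$. Since $m$ depends only on $\omega$, the integral factorizes:
\[
\mathcal G \leq -c\, g(\bar v)\,|\bar v|^{\gamma+2s+1} \Bigl(\int_{\text{strip}} m(\omega)\,\dd\omega\Bigr) \int_0^\infty \bigl[1-(1+r^2/|\bar v|^2)^{-q/2}\bigr] r^{-1-2s}\,\dd r.
\]
The $r$-integral equals $|\bar v|^{-2s}\int_0^\infty [1-(1+t^2)^{-q/2}]t^{-1-2s}\,dt$; the substitution $t=\tau/\sqrt q$ (and dominated convergence to $1-e^{-\tau^2/2}$) shows this scales like $q^s$. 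The strip has $\omega$-area $\sim 1/|\bar v|$ on $S^{d-1}$, and combining these factors yields $\mathcal G \leq -c(1+q)^s|\bar v|^\gamma g(\bar v)$, as required.

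The main obstacle will be controlling the mixed-signature second derivative of $g$: $D^2 g(\bar v)$ is negative in directions perpendicular to $\bar v$ but positive along $\bar v$, so $\Delta(u)$ could fail to be negative if $u$ has too large a parallel component. Restricting $\omega$ to $|\bar v \cdot \omega|\leq 2$ ensures the parallel contribution is $O(q/|\bar v|^2)$ times the perpendicular one, and is absorbed for $|\bar v| \geq R_q$ with $R_q \gtrsim \sqrt q$ (or $R_q \gtrsim q$ to also meet the truncation-ball size requirement).
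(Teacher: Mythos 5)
The paper's own proof of this lemma is a one-line reduction: Lemma~\ref{l:hydro-near_equilibrium} shows that $M+f^h$ satisfies \eqref{e:hydro}, and then the statement is cited directly as Proposition~3.1 of \cite{imbert-mouhot-silvestre-decay2020}. You have instead sketched a from-scratch proof, which is essentially a reconstruction of that cited proposition. The key mechanisms you identify are correct: symmetrizing the Carleman form, obtaining a lower bound $K_F(\bar v,\bar v+u)\gtrsim |\bar v|^{\gamma+2s+1}\,m(\omega)\,|u|^{-d-2s}$ from the Maxwellian mass on the slice (using $R_q\gtrsim q$ so the truncation ball $\{\langle v\rangle<c_1(q)|\bar v|\}$ still contains the effective Gaussian support), and the scaling computation $\int_0^\infty[1-(1+r^2/|\bar v|^2)^{-q/2}]r^{-1-2s}\dd r\approx q^s|\bar v|^{-2s}$ combined with the strip area $\sim 1/|\bar v|$ to produce the factor $(1+q)^s|\bar v|^\gamma$. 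This all matches the machinery of the cited proof.

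There is, however, a gap in the step where you pass from $\mathcal G = \tfrac12\int_{\R^d}\Delta(u)K_F\dd u$ to the integral over the strip $\{|\bar v\cdot\omega|\in[1,2]\}$ only. Dropping the complement and keeping the $\leq$ requires the contribution from $|\bar v\cdot\omega|\notin[1,2]$ to be nonpositive, and that is not addressed. You flag the right obstruction at the end (the radial eigenvalue of $D^2g$ is positive, so $\Delta(u)$ may be positive when $\omega$ is too aligned with $\bar v$) but only verify nonpositivity on $\{|\bar v\cdot\omega|\le 2\}$; this neither contains the strip complement nor covers the whole support of $K_F$. The fix is that the truncation you already used for other purposes also settles the sign issue: $K_F(\bar v,\bar v+u)=0$ unless the slice $\bar v+\omega^\perp$ meets $\{\langle v_*\rangle < c_1(q)|\bar v|\}$, which forces $|\bar v\cdot\omega|< c_1(q)|\bar v|=|\bar v|/(20q)$. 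Since the radial eigenvalue of $D^2 g(\bar v)$ becomes dominant only when $|\bar v\cdot\omega|\gtrsim |\bar v|/\sqrt{q}$, the entire support of $K_F(\bar v,\cdot)$ lies in the cone of directions where $\Delta<0$ (once $|\bar v|>R_q$), so the discarded region indeed contributes nonpositively. Making that explicit is what is needed to close the argument; without it, the displayed inequality for $\mathcal G$ is asserted rather than established.
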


\begin{proof}
By Lemma \ref{l:hydro-near_equilibrium}, the bound $M-g(v)/2 \leq (M+f^h) \leq M+g(v)/2$ implies the bounds \eqref{e:hydro} on the hydrodynamic quantities on the function $M+f^h$. Thus, this lemma is the same as \cite[Proposition 3.1]{imbert-mouhot-silvestre-decay2020}.
\end{proof}

The term $\mathcal B$ may be estimated by the combination of Propositions 3.7, 3.8, and 3.9 in \cite{imbert-mouhot-silvestre-decay2020}. However, because the statements of these propositions are not completely explicit for our purposes, and the setting in this paper is slightly simpler, we present the computation to estimate $\mathcal B$ in full detail. In order to do so, we further split the term $\mathcal B = \mathcal B_1 + \mathcal B_2$, where
\begin{align*}
\mathcal B_1 &:= Q_s(\one_{\langle v \rangle \geq c_1(q)|\bar v|} (M+f^h), \one_{\langle v \rangle < |\bar v|/2} g)(\bar v), \\
\mathcal B_2 &:= Q_s(\one_{\langle v \rangle \geq c_1(q)|\bar v|} (M+f^h), \one_{\langle v \rangle > |\bar v|/2} g)(\bar v).
\end{align*}

The following is an auxiliary lemma.
\begin{lemma} \label{l:b2}
Assuming $q > d+\gamma+2s$ and $|f^h(v)| \leq U g(v)$ for all $v\in \R^d$, for any $r\geq 1$ and $v \in \R^d$ there holds
\[ \int_{\langle w \rangle \geq r} |f^h(w)| |v-w|^{2s+\gamma} \dd w \lesssim \frac{U}{q-d} r^{-q+d} |v|^{\gamma+2s} + \frac{U}{q-d-\gamma-2s} r^{-q+d+\gamma+2s}. \]
\end{lemma}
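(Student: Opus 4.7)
The plan is a straightforward splitting argument based on the triangle inequality and the explicit polynomial decay of $g$. First, I would use the hypothesis $|f^h(w)|\leq U g(w)\lesssim U\langle w\rangle^{-q}$ to replace $|f^h|$ by its pointwise envelope, reducing the estimate to the purely polynomial integral
\[
\int_{\langle w\rangle\geq r} U\langle w\rangle^{-q}\,|v-w|^{\gamma+2s}\,\mathrm{d}w.
\]
Since $\gamma+2s\in[0,2]$, I can apply the triangle inequality $|v-w|\leq |v|+|w|$ and the elementary bound $(a+b)^{\alpha}\lesssim a^{\alpha}+b^{\alpha}$ valid for $\alpha\in[0,2]$ with a dimensional constant, to obtain $|v-w|^{\gamma+2s}\lesssim |v|^{\gamma+2s}+|w|^{\gamma+2s}$. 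This splits the integral into two pieces that can be handled independently.

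For the first piece, $|v|^{\gamma+2s}$ comes out of the integral, leaving
\[
U|v|^{\gamma+2s}\int_{\langle w\rangle\geq r}\langle w\rangle^{-q}\,\mathrm{d}w.
\]
Polar coordinates, together with $\langle w\rangle\approx |w|$ for $\langle w\rangle\geq r\geq 1$, give an integral of the form $\int_{\sim r}^{\infty}\rho^{d-1-q}\,\mathrm{d}\rho$, which evaluates to $\tfrac{C}{q-d}\,r^{d-q}$ since $q>d+\gamma+2s\geq d$. This matches the first term in the claimed bound.

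For the second piece, I use $|w|\leq \langle w\rangle$ (and $\gamma+2s\geq 0$) to bound $\langle w\rangle^{-q}|w|^{\gamma+2s}\leq \langle w\rangle^{-q+\gamma+2s}$, then again pass to polar coordinates to get $\tfrac{C}{q-d-\gamma-2s}\,r^{d-q+\gamma+2s}$, using the hypothesis $q>d+\gamma+2s$ for convergence at infinity. Adding the two contributions produces exactly the stated inequality, with the explicit $q$-dependence of the constants coming straight from the two integrations $\int_r^\infty \rho^{-q+d-1}\,\mathrm{d}\rho$ and $\int_r^\infty \rho^{-q+\gamma+2s+d-1}\,\mathrm{d}\rho$.

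There is no real obstacle here: the lemma is a bare-hands polynomial tail estimate. The only point requiring minimal care is to track that the explicit denominators $q-d$ and $q-d-\gamma-2s$ are retained (rather than absorbed in a generic constant), since these factors will matter later when the lemma is applied with $q$ large and one wants the $\mathcal B$ term from Section \ref{s:upper_bounds} to be controlled by $(1+q)^s$ from Lemma \ref{l:good-term}.
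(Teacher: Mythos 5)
Your proof is correct and follows exactly the same route as the paper's, which simply invokes ``a direct computation'' using $|v-w|^{\gamma+2s}\lesssim |v|^{\gamma+2s}+|w|^{\gamma+2s}$ followed by the two polynomial tail integrals. You have just filled in the arithmetic the paper leaves implicit; the only slip is cosmetic — the constant in $(a+b)^\alpha\lesssim a^\alpha+b^\alpha$ for $\alpha\in[0,2]$ depends on $\alpha$, not on dimension.
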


\begin{proof}
By a direct computation, using that $|v-w|^{2s+\gamma} \lesssim |v|^{2s+\gamma} + |w|^{2s+\gamma}$,
\begin{align*} 
\int_{\langle w \rangle \geq r} |f^h(w)| |v-w|^{2s+\gamma} \dd w  &\lesssim U \int_{\langle w \rangle \geq r} g(w) ( |v|^{2s+\gamma} + |w|^{2s+\gamma} ) \dd w, \\
&\lesssim U \left( \frac{1}{q-d} r^{-q+d} |v|^{2s+\gamma} + \frac{1}{q-d-\gamma-2s} r^{-q+d+\gamma+2s} \right).
\end{align*}
\end{proof}

We now estimate $\mathcal B_2$.

\begin{lemma} \label{l:B2}
Assuming $q > d+\gamma+2s$ and $|f^h(v)| \leq U g(v)$ for all $v\in \R^d$, there is a constant $C_q$ so that
\[ \mathcal B_2 \leq C_q (1+U) \langle \bar v \rangle^{-2q + d + \gamma}.\]
\end{lemma}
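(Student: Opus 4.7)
The plan is to split $\mathcal B_2$ into contributions near and far from $\bar v$, and to handle the small- and large-$|\bar v|$ regimes separately. For $|\bar v|$ bounded (by some threshold depending on $q$), both cutoffs at $c_1(q)|\bar v|$ and $|\bar v|/2$ become trivial or irrelevant, and $\mathcal B_2$ is controlled directly by Corollary \ref{c:Qsbound} together with the hypothesis $|f^h|\leq Ug$, giving $|\mathcal B_2|\leq C_q(1+U)$, which is bounded by $C_q(1+U)\langle\bar v\rangle^{-2q+d+\gamma}$ on this compact set. So the substantive work is for $|\bar v|$ large.

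Write $h_1 := \mathbf 1_{\langle v\rangle\geq c_1(q)|\bar v|}(M+f^h)$ and $\tilde g := \mathbf 1_{\langle v\rangle>|\bar v|/2}\,g$, noting that $\tilde g(\bar v)=g(\bar v)$ since $\langle\bar v\rangle\geq|\bar v|$. Then $\mathcal B_2 = \int [\tilde g(v') - g(\bar v)]K_{h_1}(\bar v,v')\dd v'$. The key auxiliary estimate I would establish first is
\[
\int |h_1(\bar v-z)|\,|z|^{\gamma+2s}\dd z \leq C_q(1+U)\,|\bar v|^{-q+d+\gamma+2s},
\]
which follows by splitting $h_1 = \mathbf 1(M) + \mathbf 1(f^h)$: the Maxwellian part contributes a super-polynomially small correction thanks to the cutoff $\langle v\rangle\geq c_1(q)|\bar v|$, while the $f^h$ part is exactly Lemma \ref{l:b2} applied with $r=c_1(q)|\bar v|$. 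I then split the integral defining $\mathcal B_2$ at $|v'-\bar v|=\rho$ with $\rho:=|\bar v|/4$.

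For the near part $|v'-\bar v|<\rho$: any such $v'$ satisfies $\langle v'\rangle>|\bar v|/2$, so $\tilde g(v')=g(v')=|v'|^{-q}$, a function that is smooth on this ball with $[g]_{C^2}\lesssim q^2|\bar v|^{-q-2}$. The Carleman formula \eqref{e:Kf} shows that $K_{h_1}(\bar v,\bar v+w)=K_{h_1}(\bar v,\bar v-w)$ (the perpendicular-plane integral defining $K$ is unchanged under $w\mapsto -w$, independent of the cutoff region for the outer variable), so the first-order Taylor term of $g$ around $\bar v$ cancels. What remains is bounded by $[g]_{C^2}|w|^2$; Lemma \ref{l:K_upper_bound} then controls the weighted moment of the kernel, yielding
\[
\bigl|\text{near part}\bigr| \lesssim q^2|\bar v|^{-q-2}\cdot \rho^{2-2s}\cdot C_q(1+U)|\bar v|^{-q+d+\gamma+2s} = C_q(1+U)|\bar v|^{-2q+d+\gamma}.
\]
For the far part $|v'-\bar v|\geq\rho$, I treat the $\tilde g(v')$ and $g(\bar v)$ contributions separately. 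Lemma \ref{l:K_upper_bound} gives $\int_{|v'-\bar v|\geq\rho}|K_{h_1}|\dd v'\lesssim \rho^{-2s}\cdot C_q(1+U)|\bar v|^{-q+d+\gamma+2s}$. Multiplying by $g(\bar v)\lesssim|\bar v|^{-q}$, or by the pointwise bound $|\tilde g(v')|\lesssim|\bar v|^{-q}$ (using that $\langle v'\rangle>|\bar v|/2$ forces $|v'|\gtrsim|\bar v|$), yields the same bound $C_q(1+U)|\bar v|^{-2q+d+\gamma}$.

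The main subtlety I expect is keeping the $q$-dependence in the constants transparent and verifying that the symmetry of $K_f$ in the near part really does survive the restriction of $f$ to $\{\langle v\rangle\geq c_1(q)|\bar v|\}$; this works because the Carleman variable is perpendicular to $w=v'-v$ and does not interact with a cutoff imposed on the outer variable. A sanity check on exponents: $(-q-2)+(2-2s)+(-q+d+\gamma+2s) = -2q+d+\gamma$, exactly matching the target, which is a reassuring consistency test for the choice $\rho=|\bar v|/4$.
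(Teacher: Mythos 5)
Your proof is correct. It is essentially the same approach as the paper's, with the interpolation argument of Corollary \ref{c:Qsbound} unfolded by hand: the paper applies that corollary directly to $g_{\bar v}=\one_{\vv>|\bar v|/2}\,g$ and computes $\|g_{\bar v}\|_{L^\infty}^{1-s}[g_{\bar v}]_{C^2(\bar v)}^{s}\approx q^{2s}\langle\bar v\rangle^{-q-2s}$ before multiplying by the tail convolution (Lemma \ref{l:b2} plus the Maxwellian tail), whereas you split the singular integral at $\rho=|\bar v|/4$, use symmetry of $K$ and the $C^2$ Taylor remainder of $g$ on $B_\rho(\bar v)$ for the near part, and the kernel mass bound of Lemma \ref{l:K_upper_bound} for the far part. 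Both routes rest on the same two ingredients and land on the same exponent $-2q+d+\gamma$; your version is slightly longer but avoids having to reason about the localized $C^2$ seminorm of the truncated weight $g_{\bar v}$, which has a jump.
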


\begin{proof}
Let $g_{\bar v}(v) := \one_{\vv > |\bar v|/2} g(v)$. Applying Corollary \ref{c:Qsbound}, we get
\[ \mathcal B_2 \lesssim \|g_{\bar v}\|_{L^\infty}^{1-s} [g_{\bar v}]_{C^2(\bar v)}^{s} \int_{\langle w \rangle \geq c_1(q)|\bar v|} (M(w)+f^h(w)) |\bar v-w|^{2s+\gamma} \dd w. \]

By a direct computation, we verify that
\[ \|g_{\bar v}\|_{L^\infty} \approx \langle \bar v \rangle^{-q}, \qquad [g_{\bar v}]_{C^2} \approx q^2 \langle\bar v\rangle^{-q-2}.\]
Therefore,
\begin{equation}\label{e:b1}
\mathcal B_2 \lesssim q^{2s} \langle\bar v\rangle^{-q-2s} \int_{\langle w \rangle \geq c_1(q)|\bar v|} (M(w)+f^h(w)) |\bar v-w|^{2s+\gamma} \dd w.
\end{equation}

In order to estimate the integral on the right hand side, we compute each of its two terms separately. On one hand, assuming $q>d+\gamma+2s$, Lemma \ref{l:b2} tells us that
\begin{align*} 
\int_{\langle w \rangle \geq c_1(q)|\bar v|} |f^h(w)| |\bar v-w|^{2s+\gamma} \dd w  &\lesssim C_q U \langle \bar v \rangle^{-q+d+2s+\gamma}
\end{align*}
The term involving $M(w)$ decays faster than exponential for large velocities. In order to do this proof, we only need the following gross overestimation
\begin{align*} 
\int_{\langle w \rangle \geq c_1(q)|\bar v|} M(w) |\bar v-w|^{2s+\gamma} \dd w  &\lesssim C_q \langle \bar v \rangle^{-q+d+2s+\gamma}
\end{align*}

Replacing the last two bounds back into \eqref{e:b1}, we complete the proof.
\end{proof}

We now proceed to estimate the term $\mathcal B_1$
\begin{lemma} \label{l:B1}
Assuming $q > d+\gamma+2s$ and $|f^h(v)| \leq U g(v)$ for all $v\in \R^d$, we have
\[ \mathcal B_1 \lesssim \frac{U}{q-d-\gamma-2s} \langle \bar v \rangle^{-q + \gamma} +  \exp(-\langle \bar v \rangle).\]
\end{lemma}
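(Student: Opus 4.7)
The plan is to use the Carleman representation of $K_F$ in spherical coordinates centered at $\bar v$, and to exploit the geometric separation of the supports of $F$ (which lives at large velocities) and $G := \one_{\langle v\rangle < |\bar v|/2}\,g$ (which lives at small velocities) to localize the relevant directions in a small spherical cap around $-\hat{\bar v}$. First I would note that $\langle\bar v\rangle > |\bar v|/2$ for every $\bar v$, so $G(\bar v) = 0$ and hence
\[ \mathcal B_1 = \int_{\langle v'\rangle<|\bar v|/2} g(v')\, K_F(\bar v,v')\,dv'.\]
Writing $v' = \bar v + \rho\omega$ with $\omega\in\mathbb S^{d-1}$ and $\rho > 0$, formula \eqref{e:Kf} and $dv' = \rho^{d-1}\,d\rho\,d\omega$ produce the factored estimate
\[ \mathcal B_1 \lesssim \int_{\mathbb S^{d-1}}\left[\int_{w\perp\omega} F(\bar v+w)|w|^{\gamma+2s+1}dw\right]\left[\int_{I(\omega)} g(\bar v+\rho\omega)\rho^{-1-2s}d\rho\right] d\omega, \]
where $I(\omega) := \{\rho>0 : \langle\bar v+\rho\omega\rangle < |\bar v|/2\}$.

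The second step identifies this integration region. Solving the inequality $|\bar v+\rho\omega|^2 < |\bar v|^2/4 - 1$ shows $I(\omega)=\emptyset$ unless $\omega$ lies in the cap $\mathcal C$ of angular radius $\pi/6$ about $-\hat{\bar v}$; parametrizing $\omega\in\mathcal C$ by $\psi\in[0,\pi/6]$, one checks $I(\omega)\subset[|\bar v|/3,\,2|\bar v|]$. Completing the square gives $|\bar v+\rho\omega|^2 = (\rho - |\bar v|\cos\psi)^2 + |\bar v|^2\sin^2\psi$, so extending $\rho$ to $\R$ produces
\[ \int_{I(\omega)} g(\bar v+\rho\omega)\rho^{-1-2s}\,d\rho \lesssim |\bar v|^{-1-2s}\,\langle|\bar v|\sin\psi\rangle^{1-q}. \]
For the hyperplane factor, the substitution $u = \bar v + w$ places $u$ on the affine hyperplane $\{u\cdot\omega = -|\bar v|\cos\psi\}$, which sits at distance $|\bar v|\cos\psi \geq \sqrt{3}|\bar v|/2$ from the origin. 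This makes the indicator $\langle u\rangle \geq c_1(q)|\bar v|$ automatic (since $c_1(q) = 1/(20q) < \sqrt{3}/2$) and bounds the $M$-contribution by $e^{-3|\bar v|^2/8}$, which I absorb into the final $\exp(-\langle\bar v\rangle)$ term. Writing $u = -|\bar v|\cos\psi\,\omega + t$ with $t\in\omega^\perp$, using $g(u)\lesssim(1 + |\bar v|^2\cos^2\psi + |t|^2)^{-q/2}$ and $|w|^{\gamma+2s+1}\lesssim |t|^{\gamma+2s+1} + (|\bar v|\sin\psi)^{\gamma+2s+1}$, a standard $(d-1)$-dimensional polar integration yields
\[ \int_{w\perp\omega} U\, g(\bar v+w)|w|^{\gamma+2s+1}dw \lesssim \frac{U\,|\bar v|^{d+\gamma+2s-q}}{q - d - \gamma - 2s}. \]

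Finally, I would multiply the two factors and integrate over $\mathcal C$. The substitution $\xi = |\bar v|\sin\psi$ yields
\[ \int_{\mathcal C} \langle|\bar v|\sin\psi\rangle^{1-q}\,d\omega \lesssim |\bar v|^{-(d-1)}\int_0^\infty \xi^{d-2}(1+\xi)^{1-q}\,d\xi \lesssim |\bar v|^{-(d-1)}, \]
and multiplying everything together,
\[ \mathcal B_1 \lesssim \frac{U}{q-d-\gamma-2s}\,|\bar v|^{d+\gamma+2s-q}\cdot|\bar v|^{-1-2s}\cdot|\bar v|^{-(d-1)} + \exp(-\langle\bar v\rangle) = \frac{U\,\langle\bar v\rangle^{-q+\gamma}}{q-d-\gamma-2s} + \exp(-\langle\bar v\rangle), \]
which is the claimed bound (the case of small $|\bar v|$ is immediate, since both sides are $O(1)$). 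The main obstacle will be the bookkeeping in the spherical parametrization: specifically, verifying that the angular measure supplies exactly the extra factor $|\bar v|^{-(d-1)}$ that is needed to convert the naive rate $|\bar v|^{-q+d+\gamma}$ one would get from Lemma \ref{l:K_upper_bound} into the sharper $|\bar v|^{-q+\gamma}$ claimed here, and that all the $q$-dependent constants collapse to the advertised $1/(q-d-\gamma-2s)$.
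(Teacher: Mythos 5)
Your proof is correct, and it takes a genuinely different route from the paper's. Where you parametrize $v' = \bar v + \rho\omega$ in spherical coordinates about $\bar v$ and integrate in the order $(\omega,\rho,w)$, the paper instead performs the Carleman-type swap of the $v'$ and $w$ integrations — picking up a Jacobian that turns $|w|^{\gamma+2s+1}|v'-\bar v|^{-d-2s}$ into $|w|^{\gamma+2s}|v'-\bar v|^{-d-2s+1}$ — bounds the resulting inner $v'$-integral over the $(d-1)$-dimensional disk crudely by $\langle\bar v\rangle^{-2s}$ using $g(v') - g(\bar v) \lesssim 1$, and then applies Lemma~\ref{l:b2} to the outer $w$-integral.

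The interesting point of comparison is what the two arguments actually capture. Your argument contains the real content of the estimate: the indicator $\one_{\langle v'\rangle<|\bar v|/2}$ forces $\omega$ into the cap $\psi\le\pi/6$ about $-\hat{\bar v}$, and then the weight $\langle|\bar v|\sin\psi\rangle^{1-q}$ localizes the angular integration to a cap of radius $\sim|\bar v|^{-1}$, supplying the factor $|\bar v|^{-(d-1)}$ that, combined with the $|\bar v|^{-1-2s}$ from the radial integral and the $|\bar v|^{d+\gamma+2s-q}$ from the hyperplane integral, lands exactly on $\langle\bar v\rangle^{\gamma-q}$. The paper's bound of the inner $v'$-integral by $\langle\bar v\rangle^{-2s}$, without keeping track of the direction $\hat w$ or the decay of $g$ on the disk, yields after applying Lemma~\ref{l:b2} a rate $\langle\bar v\rangle^{-q+d+\gamma}$ rather than $\langle\bar v\rangle^{-q+\gamma}$ (the paper's final displayed line $\langle\bar v\rangle^{d+\gamma}$ looks like it has dropped the $-q$, but even the corrected $\langle\bar v\rangle^{-q+d+\gamma}$ is off by $\langle\bar v\rangle^{d}$ from the stated conclusion). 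That stated rate is genuinely needed for Corollary~\ref{c:Q(M+f,g)} and Lemma~\ref{l:q-upper-bound}, so your more careful treatment of the angular localization is not a gratuitous sharpening — it is the step that makes the lemma go through. The paper's route can be repaired by retaining the $\langle \bar v\cdot\hat w\rangle^{d-1-q}$ weight on the inner disk integral rather than dropping $g$'s decay, which would produce the same angular localization of $\hat w$ that you obtain for $\omega$; but as written your computation is the complete one. Two small bookkeeping points: first note that $\mathcal B_1 = \int_{\langle v'\rangle<|\bar v|/2}g(v')K_F\,dv'$ exactly (with $G(\bar v)=0$ as you observe), and second, in the hyperplane estimate, the $(|\bar v|\sin\psi)^{\gamma+2s+1}$-contribution does not actually carry the factor $1/(q-d-\gamma-2s)$, but since that term yields a final contribution of order $|\bar v|^{-1-2s-q} \lesssim |\bar v|^{\gamma-q}$ it is dominated and the conclusion stands.
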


\begin{proof}
Note that for $\langle v' \rangle < |\bar v|/2$, we have $g(v') - g(\bar v) \geq 0$. Recalling the integral expression for $\mathcal B_1$, we have
\[ \mathcal B_1 \lesssim \int_{\langle v' \rangle < |\bar v|/2} \int_{\{w\perp(v'-\bar v), \langle \bar v +w \rangle \geq c_1(q)|\bar v|\}} (M+f^h)(\bar v+w)|w|^{\gamma+2s+1} [g(v') - g(\bar v)] |v'-\bar v|^{-d-2s} \dd w \dd v', \]
with implied constant depending only on $C_b$ in \eqref{e:collision-kernel-assumptions}. The point that we want to make here is that given $\langle v' \rangle < |\bar v|/2$ and $(v'-\bar v) \perp w$, then we automatically have $|v+w| > \sqrt{3}/2 |\bar v|$ by a straightforward geometric argument. Thus, the expression bounding $\mathcal B_1$ can be simplified to
\begin{align*}
\mathcal B_1 &\lesssim \int_{\langle v' \rangle < |\bar v|/2} \int_{\{w\perp(v'-\bar v), \langle \bar v +w \rangle \geq \sqrt{3}/2|\bar v|\}} (M+f^h)(\bar v+w)|w|^{\gamma+2s+1} [g(v') - g(\bar v)] |v'-\bar v|^{-d-2s} \dd w \dd v', \\
\intertext{Switching the order of integration,}
& = \int_{\langle \bar v +w \rangle \geq \sqrt{3}/2|\bar v|} \int_{\{ (v'-\bar v) \perp w, \langle v' \rangle < |\bar v|/2 \} }  (M+f^h)(\bar v+w)|w|^{\gamma+2s} [g(v') - g(\bar v)] |v'-\bar v|^{-d-2s+1} \dd w \dd v', \\
\intertext{Using here $0\leq g(v') - g(\bar v) \lesssim 1$,}
& \lesssim \langle \bar v \rangle^{-2s} \int_{\langle \bar v +w \rangle \geq \sqrt{3}/2|\bar v|} (M+f^h)(\bar v+w)|w|^{\gamma+2s} \dd w,
\intertext{Applying Lemma \ref{l:b2} for the term involving $f^h$ and using that $M$ decays faster than any exponential,}
& \lesssim \frac{U}{q-d-\gamma-2s} \langle \bar v \rangle^{d+\gamma} +  \exp(-\langle \bar v \rangle)
\end{align*}
\end{proof}

Combining Lemmas \ref{l:B1} and \ref{l:B2}, we conclude the following upper bound for $\mathcal B$:

\begin{prop}\label{p:bad-term}
Assuming $q > d+\gamma+2s$ and $|f^h(v)| \leq U g(v)$ for all $v\in \R^d$, then
\[ \mathcal B \lesssim \frac{U}{q-d-\gamma-2s} \langle \bar v \rangle^{-q+\gamma} + C_q \left(1+U\right) \langle \bar v \rangle^{-2q+d+\gamma}.\]
\end{prop}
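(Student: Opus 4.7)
The plan is simply to combine the two preceding lemmas. By the way $\mathcal B_1$ and $\mathcal B_2$ were defined, we have $\mathcal B = \mathcal B_1 + \mathcal B_2$, so first I would invoke Lemma \ref{l:B1} to control $\mathcal B_1$ by
\[
\frac{U}{q-d-\gamma-2s}\,\langle \bar v\rangle^{-q+\gamma} + \exp(-\langle \bar v\rangle),
\]
and then invoke Lemma \ref{l:B2} to control $\mathcal B_2$ by $C_q(1+U)\langle \bar v\rangle^{-2q+d+\gamma}$. Summing already produces the stated bound, modulo the parasitic $\exp(-\langle \bar v\rangle)$ term that comes out of Lemma \ref{l:B1}.

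The only bookkeeping left is to absorb this exponential remainder into the second term of the proposition. Since $q>d+\gamma+2s$, the exponent $-2q+d+\gamma$ is negative, so $\langle \bar v\rangle^{-2q+d+\gamma}$ is a positive decreasing function of $|\bar v|$ bounded below by the value at $|\bar v|\to\infty$ only at the origin. Because $\exp(-\langle \bar v\rangle)$ is bounded above by $e^{-1}$ and decays faster than any algebraic rate, there exists a constant $c_q$ depending on $q$, $d$, $\gamma$ such that $\exp(-\langle \bar v\rangle)\leq c_q\langle \bar v\rangle^{-2q+d+\gamma}$ for every $\bar v\in\R^d$. Enlarging the $C_q$ inherited from Lemma \ref{l:B2} by this $c_q$ finishes the proof. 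There is no genuine obstacle here — all the analytic work has been front-loaded into Lemmas \ref{l:B1} and \ref{l:B2}, and Proposition \ref{p:bad-term} is really just the packaging of those two estimates into a single statement to be fed into the upper-bound argument downstream.
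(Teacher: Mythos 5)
Your proposal is correct and matches the paper's proof, which simply states that the result follows by combining Lemmas \ref{l:B1} and \ref{l:B2}. The only extra content you supply is the (correct, if slightly garbled in its phrasing about ``bounded below by the value at $|\bar v|\to\infty$'') observation that the $\exp(-\langle\bar v\rangle)$ remainder is absorbed into $C_q\langle\bar v\rangle^{-2q+d+\gamma}$ because exponential decay dominates any fixed polynomial rate, a step the paper leaves implicit.
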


\begin{cor} \label{c:Q(M+f,g)}
Assume $|f^h(v)| \leq g(v)/2$ for all $v\in \R^d$. There exists a $q_0$ sufficiently large so that for all $q>q_0$ we have
\[ Q_s(M+f^h,g)(\bar v)  \leq (a_q - b_q \langle \bar v \rangle^{\gamma_+}) g(\bar v), \]
Moreover, $b_q \approx q^s$ for large $q$.
\end{cor}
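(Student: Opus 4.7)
The plan is to combine the coercive estimate for $\mathcal{G}$ from Lemma \ref{l:good-term} with the upper bound on $\mathcal{B}$ from Proposition \ref{p:bad-term} via the decomposition $Q_s(M+f^h,g)(\bar v) = \mathcal{G} + \mathcal{B}$ introduced in \eqref{e:G}--\eqref{e:B}. The hypothesis $|f^h|\leq g/2$ allows Proposition \ref{p:bad-term} to be applied with $U=1/2$. Since $g(\bar v)\approx \langle \bar v\rangle^{-q}$, on the regime $|\bar v|>R_q$ these two bounds combine to give
\[ \frac{Q_s(M+f^h,g)(\bar v)}{g(\bar v)} \lesssim \left(-c(1+q)^s + \frac{C}{q-d-\gamma-2s}\right)|\bar v|^\gamma + C_q\langle \bar v\rangle^{d+\gamma-q}. \]
Choosing $q_0$ large enough that $C/(q-d-\gamma-2s)\leq (c/2)(1+q)^s$ and $q_0>d+\gamma$ reduces the right-hand side to $-\tfrac{c}{2}(1+q)^s|\bar v|^\gamma + C_q$, uniformly in $\bar v$ on $\{|\bar v|>R_q\}$.

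Next I split into cases according to the sign of $\gamma$. If $\gamma>0$, the elementary comparison $|\bar v|^\gamma \gtrsim \langle \bar v\rangle^{\gamma}$ for $|\bar v|\geq 1$ converts the above into $-b_q\langle \bar v\rangle^{\gamma_+}+C_q$ with $b_q\approx q^s$, which is the desired form. If $\gamma\leq 0$, then $\langle \bar v\rangle^{\gamma_+}=1$ and the negative term $-\tfrac{c}{2}q^s|\bar v|^\gamma$ is non-positive; the required inequality $Q_s/g\leq a_q-b_q$ then holds trivially for any choice $b_q\approx q^s$ by taking $a_q$ correspondingly large.

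It remains to treat the bounded region $|\bar v|\leq R_q$ by a direct estimate. Applying Corollary \ref{c:Qsbound} with the test function $\varphi=g$, using $\|g\|_{L^\infty}\leq 2$ and $[g]_{C^2(\bar v)}\lesssim q^2$, together with the bound $\int(M+|f^h|)(\bar v-w)|w|^{\gamma+2s}\dd w \lesssim \langle \bar v\rangle^{\gamma_+}$ which follows from Lemma \ref{l:convolution-C0}, one obtains $|Q_s(M+f^h,g)(\bar v)|\leq C_q$ uniformly on this compact region. Since $g(\bar v)\geq \langle R_q\rangle^{-q}$ is bounded below there, this contribution is absorbed into $a_q g(\bar v)$ by taking $a_q$ large enough in terms of $R_q$, $q$, and $b_q\langle R_q\rangle^{\gamma_+}$. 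The main obstacle is simply the bookkeeping: one must track the $q$-dependence of every constant so that the coercive coefficient $b_q$ genuinely scales as $q^s$, and this is precisely what forces the threshold $q_0$ to be chosen sufficiently large.
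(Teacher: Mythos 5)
Your proof is correct and takes exactly the route the paper sketches in one line, namely combining Lemma \ref{l:good-term} (the negative bound on $\mathcal G$) with Proposition \ref{p:bad-term} (the upper bound on $\mathcal B$) via $Q_s(M+f^h,g)=\mathcal G+\mathcal B$, and then filling in the elementary bookkeeping for the bounded region $|\bar v|\le R_q$ and for the sign cases of $\gamma$. The one small imprecision is in the compact-region step, where Lemma \ref{l:convolution-C0} actually yields $\langle\bar v\rangle^{\gamma+2s}$ rather than $\langle\bar v\rangle^{\gamma_+}$, but since everything there is uniformly bounded in $|\bar v|\le R_q$ this does not affect the argument.
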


\begin{proof}
Combine Lemma \ref{l:good-term} with Proposition \ref{p:bad-term}.
\end{proof}


We move on to analyze the term $Q_s(f^h,M)$ with the following Lemma.

\begin{lemma}\label{l:Qsfh}
Assume $|f^h(v)| \leq U g(v)$ for all $v\in \R^d$. There exists a universal constant $R>0$ such that if $|\bar v|>R$, then 
\[ |Q_s(f^h,M)(\bar v)| \lesssim U |\bar v|^\gamma g(\bar v),\]
with implied constant independent of $q$.
\end{lemma}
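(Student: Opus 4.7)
The plan is to split $Q_s(f^h,M)(\bar v)=\int[M(v')-M(\bar v)]K_{f^h}(\bar v,v')\,dv'$ into a local region $\{|v'-\bar v|\le|\bar v|/2\}$ and a far region $\{|v'-\bar v|>|\bar v|/2\}$, throughout bounding $|K_{f^h}|\le K_{|f^h|}$ and using $|f^h|\le Ug$. The guiding idea is that $M$ and its derivatives decay exponentially away from the origin, so the only delicate regime is when $\bar v$ is large while $v'$ is near the origin; everywhere else the contribution will turn out to be exponentially small in $|\bar v|$, hence much smaller than the target $U|\bar v|^\gamma g(\bar v)\approx U|\bar v|^{\gamma-q}$.

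In the local region every intermediate point $\xi\in[\bar v,v']$ satisfies $|\xi|\ge|\bar v|/2$, so $|D^2 M(\xi)|\lesssim\langle\bar v\rangle^2 e^{-|\bar v|^2/8}$. I would use the symmetry $K_{f^h}(\bar v,\bar v+w)=K_{f^h}(\bar v,\bar v-w)$, evident from the Carleman representation, to cancel the linear term of the Taylor expansion of $M$ at $\bar v$, so only the quadratic remainder survives. Combining this with Lemma \ref{l:K_upper_bound} at scale $r=|\bar v|/2$ and the bound $\int|f^h(\bar v-w)||w|^{\gamma+2s}\,dw\lesssim U\langle\bar v\rangle^{\gamma+2s}$ from Lemma \ref{l:convolution-C0} gives a contribution of size at most $U\langle\bar v\rangle^{\gamma+4}e^{-|\bar v|^2/8}$, comfortably smaller than the target.

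In the far region I would use $|M(v')-M(\bar v)|\le M(v')+M(\bar v)$. The $M(\bar v)$-piece is immediately exponentially small after applying Lemma \ref{l:K_upper_bound}. For the $M(v')$-piece, if $|v'|\ge|\bar v|/4$ then $M(v')\lesssim e^{-|\bar v|^2/32}$ and the same crude kernel bound suffices. The main obstacle is the subregion $|v'|<|\bar v|/4$, where $M(v')$ need not be small. Here an elementary geometric computation with $e=(v'-\bar v)/|v'-\bar v|$ shows that $|v'-\bar v|\approx|\bar v|$ and $|\bar v\cdot e|\gtrsim|\bar v|$, so the Carleman hyperplane $\{\bar v+w:w\perp e\}$ lies at distance $\gtrsim|\bar v|$ from the origin and $|\bar v+w|\gtrsim|\bar v|$ on it. For $q>d+\gamma+2s$, a rescaling of the resulting $(d-1)$-dimensional integral, with constants uniform in $q$ once $q$ exceeds this threshold, yields $\int_{w\perp e}|f^h(\bar v+w)||w|^{\gamma+2s+1}\,dw\lesssim U|\bar v|^{d+\gamma+2s-q}$, whence the pointwise bound $K_{|f^h|}(\bar v,v')\lesssim U|\bar v|^{\gamma-q}$ on this subregion. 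Integrating against $M(v')$, whose total mass is $1$, completes the estimate. The only substantive obstacle is this last geometric/rescaling step; the other pieces produce only exponentially small terms.
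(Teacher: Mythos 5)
Your proof is correct and lands on the same central mechanism as the paper: the Carleman representation, $|K_{f^h}|\le K_{|f^h|}$, and the geometric observation that when $|v'|$ is small compared to $|\bar v|$ the Carleman hyperplane $\{\bar v+w:w\perp(v'-\bar v)\}$ stays at distance $\gtrsim|\bar v|$ from the origin, so the decay of $f^h$ on that plane produces the main term $U|\bar v|^{\gamma-q}$. The organizational details differ slightly: the paper splits once on $|v'|\gtrless|\bar v|/2$ and handles the outer piece by replacing $M$ with the truncated function $\bar M$ (constant for $|v|<|\bar v|/2$), which makes Corollary \ref{c:Qsbound} directly applicable and packages the Taylor-plus-symmetry argument; you instead split on $|v'-\bar v|$, run the symmetric Taylor cancellation and Lemma \ref{l:K_upper_bound} by hand in the ball $B_{|\bar v|/2}(\bar v)$, and then further subdivide the annulus into $|v'|\ge|\bar v|/4$ (crude exponential smallness) and $|v'|<|\bar v|/4$ (the geometric argument). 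Both routes are sound; the paper's $\bar M$ device saves one region at the cost of a small sleight-of-hand, while your three-region split is a bit longer but each piece is elementary. One small point worth making explicit when you write this up: you need $q>d+\gamma+2s$ for both the convolution bound and the rescaled $(d-1)$-dimensional integral to converge, which is already part of the standing hypotheses on $g$.
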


\begin{proof}
We have from \eqref{e:Qs} that $Q_s(f^h,M) = \int_{\R^d} K_{f^h}(\bar v,v') [M(v') - M(\bar v)] \dd v'$. We divide this integral into the parts where $|v'|\geq |\bar v|/2$ and $|v'|< |\bar v|/2$. For the first part, define
\[ \bar M(v) := \begin{cases} M(v), &|v|\geq |\bar v|/2,\\ M(\bar v), & |v|< |\bar v|/2.\end{cases}\]
We then have
\begin{align}
 \int_{\{|v'|\geq |\bar v|/2\}} K_{f^h}(\bar v,v') [M(v') - M(\bar v)] \dd v' &= \int_{\R^d} K_{f^h}(\bar v,v') [\bar M(v') - \bar M(\bar v)] \dd v' \nonumber \\
&\leq \|\bar M\|_{L^\infty(\R^d)}^{1-s} \|\bar M\|_{C^2_v(\R^d)}^s  \int_{\R^d} |f^h(\bar t,\bar x,w)| |\bar v-w|^{\gamma+2s} \dd w \nonumber \\
&\lesssim ( e^{-|\bar v|^2/8})^{1-s} ( |\bar v|^2 e^{-|\bar v|^2/8})^s U \langle \bar v\rangle^{\gamma+2s} \lesssim U \langle \bar v \rangle^{-q+\gamma}. \label{e:qM1}
\end{align}
The last inequality is a rather brutal bound from above. In the second line, we used Corollary \ref{c:Qsbound}, and in the third line, we used $|f^h(v)| \lesssim U g(v)$. 

Next, we address the integral over $\{v' : |v'|< |\bar v|/2\}$. Note that $M(\bar v)\leq M(v')$ in this region. Using \eqref{e:Kf} and the bound $|f^h(v)| \leq U g(v)$, we have
\[
\begin{split}
\int_{\{|v'|< |\bar v|/2\}} &K_{f^h}(\bar v,v') [M(v') - M(\bar v)] \dd v'\\ 
 &\lesssim  \int_{\{|v'|< |\bar v|/2\}} |v'-\bar v|^{-d-2s} M(v') \int_{\{w\perp (v'-\bar v) \}} |f^h(\bar v+w)||w|^{\gamma+2s+1} \dd w\dd v'\\
&\lesssim  |\bar v|^{-d-2s} \int_{\{|v'|< |\bar v|/2\}} M(v') \int_{\{w\perp (v'-\bar v)\}} U \langle \bar v + w\rangle^{-q} |w|^{\gamma+2s+1} \dd w \dd v'.
\end{split}
\]

When $|v'|< |\bar v|/2$, the hyperplane $\{\bar v + w : w\perp (v'-\bar v)\}$ is at distance at least $\sqrt 3/2 |\bar v|$
 from the origin. Because of that, we verify by a direct computation that
 \[ \int_{\{w\perp (v'-\bar v)\}} \langle \bar v + w\rangle^{-q} |w|^{\gamma+2s+1} \dd w \lesssim |\bar v|^{-q+\gamma+2s+d}.\]

Therefore
\begin{align}
\int_{\{|v'|< |\bar v|/2\}} &K_{f^h}(\bar v,v') [M(v') - M(\bar v)] \dd v'
\lesssim  U \langle \bar v \rangle^{-q+\gamma} \int_{\{|v'|< |\bar v|/2\}} M(v') \dd v' \lesssim U \langle \bar v \rangle^{-q+\gamma}. \label{e:qM2}
\end{align}

The proof is concluded combining \eqref{e:qM1} with \eqref{e:qM2}.
\end{proof}

Finally, we can state and prove the propagation of the upper bound for the approximate solution $f^h$ constructed in Section \ref{s:approximate_problem}.

\begin{lemma}\label{l:q-upper-bound}
Let $g : \R^d \to (0,2]$ be as in \eqref{e:g-def}. There exists a constant $C_1>0$ so that the following holds. If $|f_0(x,v)| \leq \delta g(v)$, then the approximate solution $f^h$ constructed in Section \ref{s:approximate_problem} satisfies $|f^h(t,x,v)| < \delta e^{C_1 t} g(v)$ uniformly in $h$, for any $t,x,v \in [0,T] \times \T^d \times \R^d$ provided that $\delta e^{C_1 T} < 1/2$. 
\end{lemma}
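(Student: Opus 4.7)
The plan is to propagate the bound $|f^h(t,x,v)| \leq A(t) g(v)$ with $A(t) := \delta e^{C_1 t}$ through each of the three types of building blocks of the approximate scheme. The transport sub-intervals in $\mathcal T$ are trivial: since $f^h(t,x,v) = f^h(t_{i-1}, x - 2(t-t_{i-1})v, v)$, any bound of the form $|f^h(t_{i-1},\cdot,v)| \leq A g(v)$ is preserved pointwise. The mollification step at $t_i$ (for $i$ odd) is likewise trivial: since the mollifier in \eqref{e:mollification} acts in $x$ only and is non-negative with unit mass, and since $g$ depends on $v$ only, the bound $|f^h_i(x,v)| \leq \sup_y |f^h(t_i^-, x-y, v)| \leq A g(v)$ is immediate. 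Thus the whole increase of $A(t)$ must be absorbed on the space-homogeneous Boltzmann sub-intervals.

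On a Boltzmann sub-interval $[t_{i-1}, t_i) \subset \mathcal D$, fix $x$ and run a barrier argument in $(t,v)$ only. Set $\bar A(t) := A(t_{i-1}) e^{2 C_1 (t-t_{i-1})}$ and let $t_0$ be the first time at which $f^h(t_0, x, \bar v) = \bar A(t_0) g(\bar v)$ at some point $\bar v$ (the symmetric lower barrier $-f^h \leq \bar A g$ is treated the same way). At that touching point, $\bar v$ is a maximum of $f^h/g$, so $f^h(v') - f^h(\bar v) \leq \bar A(t_0) (g(v') - g(\bar v))$ for every $v'$. The standing bound $\bar A(t_0) \leq \delta e^{C_1 T} < 1/2$ ensures $|f^h| \leq g/2$, so $M+f^h \geq 0$, $K_{M+f^h} \geq 0$, and Corollary \ref{c:Q(M+f,g)} applies.

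Writing $\partial_t f^h = 2 Q(M+f^h, M+f^h) = 2 Q(f^h, M) + 2 Q_s(M+f^h, f^h) + 2 Q_{ns}(M+f^h, f^h)$ (using $Q(M,M) = 0$), I estimate each term at $(t_0, x, \bar v)$. For the diffusive piece, positivity of $K_{M+f^h}$ combined with the comparison above gives
\[
Q_s(M+f^h, f^h)(\bar v) \leq \bar A(t_0)\, Q_s(M+f^h, g)(\bar v) \leq \bar A(t_0)\, (a_q - b_q \langle \bar v \rangle^{\gamma_+})\, g(\bar v)
\]
by Corollary \ref{c:Q(M+f,g)}. The non-singular term $Q_{ns}(M+f^h, f^h)(\bar v) = C ((M+f^h) \ast |\cdot|^\gamma)(\bar v) f^h(\bar v)$ is controlled by Lemma \ref{l:convolution-C0} and $|f^h| \leq \bar A g$, yielding $O(\bar A(t_0) \langle \bar v \rangle^{\gamma_+} g(\bar v))$. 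Lemma \ref{l:Qsfh} bounds $Q_s(f^h, M)(\bar v)$ by $O(\bar A(t_0) \langle \bar v \rangle^{\gamma} g(\bar v))$ for $|\bar v|$ large, and $Q_{ns}(f^h, M)(\bar v)$ is super-polynomially small because of the Maxwellian factor $M(\bar v)$. Collecting,
\[
\partial_t f^h(t_0, x, \bar v) \leq 2 \bar A(t_0)\, \bigl[a_q + (K - b_q) \langle \bar v \rangle^{\gamma_+} \bigr]\, g(\bar v),
\]
where $K$ absorbs all constants from the error terms. Since $b_q \approx q^s$ by Corollary \ref{c:Q(M+f,g)}, choosing $q$ sufficiently large makes $b_q \geq K$, so the bracket is at most $a_q$, and with $C_1 := a_q$ we obtain $\partial_t f^h(t_0, x, \bar v) \leq 2 C_1 \bar A(t_0) g(\bar v) = \bar A'(t_0) g(\bar v)$. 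At a first touching time we would need $\partial_t (f^h - \bar A g)(t_0, x, \bar v) \geq 0$, which (after a standard regularization $\bar A \mapsto \bar A + \eps e^{2 C_1 t}$ and $\eps \to 0$) contradicts the previous inequality. Iterating over the $N/2$ Boltzmann sub-intervals yields an accumulated growth of $e^{2 C_1 (Nh/2)} = e^{C_1 T}$.

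The main obstacle is book-keeping the sign structure: $Q(M+f^h, f^h)$ supplies the coercive piece (via positivity of $K_{M+f^h}$) that we genuinely need, while $Q(f^h, M)$ has no sign and must be controlled by absolute-value estimates with the correct velocity weights. The "$q$ sufficiently large" hypothesis enters exactly at the step of ensuring $b_q \geq K$, so that the coercive contribution from the good term in Corollary \ref{c:Q(M+f,g)} overwhelms the error contributions of order $\langle \bar v \rangle^{\gamma_+} g(\bar v)$ simultaneously from $Q_{ns}(M+f^h, f^h)$, $Q_s(f^h, M)$ and $Q_{ns}(f^h, M)$.
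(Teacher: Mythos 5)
Your proof is correct and follows essentially the same approach as the paper: a first-crossing-point barrier argument at a fixed $\bar x$, using the Carleman decomposition, the ellipticity of $K_{M+f^h}$, the coercivity in Corollary \ref{c:Q(M+f,g)}, and the error estimates in Lemmas \ref{l:convolution-C0} and \ref{l:Qsfh}. The only cosmetic differences are that you track the exponential growth per Boltzmann sub-interval (with the explicit factor of $2$ from \eqref{e:space-homogeneous-part}) rather than with a single global barrier, and you fold the large-$|\bar v|$ and small-$|\bar v|$ regimes into one inequality; neither changes the substance.
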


\begin{proof}
Let us write $U(t) := \delta e^{C_1 t}$, where the constant $C_1$ will be determined later. Naturally, $g(v) \approx \langle v \rangle^{-q}$. We should think of the function $U(t) g(v)$ as an upper barrier. We verify the inequality $f^h < U(t) g(v)$ by contradiction, using the classical idea of evaluating the equation at the first crossing point.

By construction, we have $|f^h(0,x,v)| = |f_0(x,v)| < \delta g(v) = U(0) g(v)$. We want to propagate this inequality to future values of $t$. The way we organize this proof is by showing that if $|f^h(t_i,x,v)| < U(t_i) g(v)$ for some $i \in \N$, then we also have $|f^h(t,x,v)| < U(t) g(v)$ for the whole interval $t \in [t_i,t_{i+1}]$. Naturally, the reason is different depending on whether $[t_i,t_{i+1}]$ is an interval in $\mathcal D$ or in $\mathcal T$.

Note that if $f^h$ satisfies $|f^h(t_i,x,v)| < U(t_i) g(v)$ for some $i$ odd, then the function $f_i$ given by the $x$-mollification \eqref{e:mollification} will trivially satisfy the same inequality. Moreover, in the interval $[t_i,t_{i+1}]$ $f^h$ solves the transport equation that merely moves values around in space. Any upper bound independent of $x$ will be invariant during every interval contained in $\mathcal T$. Since $U$ is increasing in time, then the inequality $|f^h(t_i,x,v)| < U(t_i) g(v)$ easily implies $|f^h(t,x,v)| < U(t_i) g(v) < U(t) g(v)$ for any $t,x,v \in (t_i,t_{i+1}] \times \T^d \times \R^d$.

We are left to analyze the case that $[t_i,t_{i+1}]$ is an interval in $\mathcal D$. In these intervals, the function $f^h$ solves the space-homogeneous problem \eqref{e:space-homogeneous-part} for every fixed value of $x$. Our analysis below is essentially an upper bound for the space-homogeneous Boltzmann equation.

Let us fix any point $\bar x \in \T^d$. We know that $|f^h(t_i,\bar x,v)| < U(t_i) g(v)$ and want to prove that $|f^h(t,\bar x,v)| < U(t) g(v)$ for every $t \in [t_i,t_{i+1}]$. For the sake of contradiction, let us suppose that this inequality is invalid somewhere. For this fixed value of $\bar x$, let $\bar t$ be the first time that the inquality does not hold:
\[ \bar t := \inf \left\{ t \in [t_i,t_{i+1}] : \text{ there exists } v \in \R^d \text{ such that } |f^h(t,\bar x,v)| \geq U(t) g(v) \right\} .\]

Since $f^h(\cdot,\bar x, \cdot)$ is continuous in $[t_i,t_{i+1}] \times \R^d$, and moreover $\lim_{|v| \to \infty} |f^h(t,\bar x,v)|/g(v) = 0$ uniformly for $t \in [t_i,t_{i+1}]$, then there must exist a first crossing point $\bar t , \bar v \in (t_i,t_{i+1}] \times \R^d$ so that
\begin{equation}\label{e:crossing-point-def}
\begin{split}
f^h(\bar t, \bar x, v) &= U\left(\bar t\right) g(\bar v), \\
f^h(t, \bar x, v) &< U(t) g(v) \text{ for any } t \in [t_i,\bar t), v \in \R^d.
\end{split}
\end{equation} 
Our plan is to obtain a contradiction by verifying that the equation \eqref{e:space-homogeneous-part} cannot hold at the point $(\bar t, \bar x, \bar v)$.

As it is standard for this type of barrier argument, we obtain a series of inequalities at the point $(\bar t, \bar x, \bar v)$. In this case,
\begin{equation}\label{e:crossing}
\begin{split}
\partial_t f^h(\bar t,\bar x,\bar v) &\geq U'(\bar t) g(\bar v), \\
Q(M+f^h,f^h)(\bar t, \bar x, \bar v) &\leq U\left(\bar t\right) Q(M+f^h(\bar t, \bar x, \cdot),g)(\bar v).
\end{split}
\end{equation}
The second of these inequalities is the standard ellipticity of the integral operator $Q_s$. It follows directly from the expression \eqref{e:Qs} observing that $K_{M+f^h}$ is a nonnegative kernel. Moreover, observe that $Q_{ns}(\varphi,f^h)(v) = Q_{ns}(\varphi,g)(v)$, for any functions $\varphi,f,g$, at any point where $f(v)=g(v)$.

From the equation \eqref{e:space-homogeneous-part} and the fact that $Q(M,M) = 0$, we have
\[ \partial_t f^h = Q(M+f^h,M+f^h) = Q(M+f^h,f^h) + Q(f^h,M). \]
Let us decompose the right hand side even further according to the decomposition of Section \ref{s:carleman}. Using \eqref{e:Qns} to write terms with $Q_{ns}$ as convolutions, we have
\begin{equation} \label{e:ste_decomp}
\partial_t f^h = C[(M+f^h)\ast|\cdot|^\gamma] \, f^h + C[f^h \ast|\cdot|^\gamma] \, M + Q_s( M+f^h, f^h) + Q_s(f^h,M).
\end{equation}
Evaluating this at the first crossing point $(\bar t, \bar x,\bar v)$ and using the second inequality in \eqref{e:crossing}, this gives
\begin{equation}\label{e:contact-point}
\partial_t f^h(\bar t, \bar x, \bar v) \leq U\left(\bar t\right) \bigg\{ [(M+f^h)\ast|\cdot|^\gamma] \, g(\bar v) + [g \ast|\cdot|^\gamma] \, M(\bar v) + Q_s(M+f^h, g)(\bar v) + Q_s(f^h/U\left(\bar t\right), M)(\bar v)  \bigg\}.
\end{equation}
Our goal is to bound this right-hand side from above and derive a contradiction with the first inequality in \eqref{e:crossing}. We use the lemmas proved earlier in this section to bound some of the terms. Moreover, we will see that for large enough $\bar v$, the term $Q_s(M+f^h, g)(\bar v)$ is strictly negative and it dominates all the other terms.

For the convolution terms in \eqref{e:contact-point}, we apply Lemma \ref{l:convolution-C0} to obtain 
\begin{equation}\label{e:convolution-terms}
[(M+f^h)\ast |\cdot|^\gamma](\bar v) g(\bar v) \lesssim |\bar v|^\gamma g(\bar v), \quad [g\ast|\cdot|^\gamma](\bar v) M(\bar v) \lesssim |\bar v|^\gamma M(\bar v),
\end{equation}
with constants independent of $q$. Note that $M$ decays much faster than $g\approx \langle v\rangle^{-q}$.

Combining Corollary \ref{c:Q(M+f,g)}, equation \eqref{e:convolution-terms}, and Lemma \ref{l:Qsfh}, we bound the right-hand side of \eqref{e:contact-point} and obtain
\[\partial_t f^h(\bar t,\bar x,\bar v) \lesssim  U(\bar t)[-cq^s |\bar v|^\gamma +2^q|\bar v|^{\gamma-2} + |\bar v|^\gamma] g(\bar v)  , \quad |\bar v|\geq R_q.\]
Picking $q$ sufficiently large, the negative term dominates, and we ensure that $\partial_t f^h(\bar t,\bar x,\bar v) < 0$ if $|\bar v|$ is large enough. Since our function $U(t)$ is increasing, the first contact point cannot occur with $|\bar v| > R_q$, for some large radius $R_q$ depending on our (finite) choice of $q$.

For $|\bar v| < R_q$, we estimate the right hand side in \eqref{e:contact-point} without any regard for the asymptotic behavior as $|\bar v|\to \infty$, and we get $\partial_t f^h < C_1 U\left(\bar t\right)$ for some universal constant $C_1$. This is the constant $C_1$ that we choose in the definition of $U(t) = \delta \exp(C_1 t)$ so that we obtain a contradiction with the first inequality in \eqref{e:crossing}.

We have shown $f^h$ is bounded above by $\delta \exp(C_1t) g(v)$ whenever the initial data is bounded by $\delta g(v)$ and provided that $U(t) \leq 1/2$ for all $t \in [0,T]$. We pick $\delta>0$ small enough so that $\delta \exp(C_1 T) < \min(\eps,1/2)$.
\end{proof}

In order to obtain the upper bounds with polynomial decay for higher powers, as in the statement of Proposition \ref{p:short_time}, we observe that the analysis in \cite{imbert-mouhot-silvestre-decay2020} applies directly to $M+f^h$ uniformly in $h$. These decay estimates depend only on the constants in \eqref{e:hydro}.

\subsection{Propagation of regularity}

In this section, we show that sufficiently high weighted Sobolev norms in the intial data $f_0$ are propagated forward to a positive time interval. The methods in this section are mostly classical since they involve coercivity and trilinear estimates in weighted Sobolev spaces. Yet, there are some delicate details that make the computations more cumbersome than one would expect.

In our proof of Proposition \ref{p:short_time}, we use the construction described in Section \ref{s:approximate_problem} with a smooth initial data. We later approximate any initial data $f_0 \in L^\infty_q$ with smooth functions, construct a solution for each of them, and pass to the limit. The estimates in this section are used for solving the approximate problem, but they will not apply to the final solution $f$ unless the initial data $f_0$ is smooth. The fact that we need these estimates for our construction might be an artifact of our proof of Proposition \ref{p:short_time}.

We begin with two auxiliary lemmas.

The following coercivity estimate gives us two negative terms. One corresponds to the usual Dirichlet form involving $K_f$. The other one is a weighted $L^2$ norm with a power $\gamma_+ + 2q$. The fact that this power is strictly larger than $2q$ when $\gamma>0$ can be seen as the reason why there is creation of $L^2$-moments in the hard potentials case.

\begin{lemma} \label{l:magic_negative_term_2}
There is a $q_0>0$ so that for any $q \geq q_0$, $f \geq 0$ satisfying $\|f-M\|_{L^\infty_{2q}} \leq \frac 1 2$, and any $\varphi : \R^d \to \R$, we have
\begin{align*} \int_{\R^d} Q(f,\varphi) \langle v \rangle^{2q} \varphi(v) \dd v \leq
-c_q \left( \iint_{\R^d\times\R^d} | \langle v' \rangle^q \varphi'- \langle v \rangle^q  \varphi|^2 K_f \dd v' \mathrm{d} v\right) +  \int_{\R^d} (a_q - b_q \langle v \rangle^{\gamma_+}) \langle v \rangle^{2q} \varphi(v)^2 \dd v.
\end{align*}
Here, $q_0$, $a_q \geq 0$, $b_q > 0$ and $c_q>0$ depend on $q$, dimension and the parameters on \eqref{e:hydro}. Moreover, $b_q \approx q^s$.
\end{lemma}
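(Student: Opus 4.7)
The plan is to reduce everything to the pointwise bound of Corollary~\ref{c:Q(M+f,g)} via a commutator identity. Setting $\bar\varphi := \langle v\rangle^q\varphi$, I would split
\[
\int_{\R^d} Q(f,\varphi)\langle v\rangle^{2q}\varphi\,\dd v = \int_{\R^d} Q(f,\bar\varphi)\bar\varphi\,\dd v + \int_{\R^d} [\langle v\rangle^q Q(f,\varphi) - Q(f,\bar\varphi)]\,\bar\varphi\,\dd v,
\]
so the problem decomposes into an unweighted quadratic form in $\bar\varphi$ plus exactly the commutator analyzed in Lemma~\ref{l:commutator}. The hypothesis $\|f-M\|_{L^\infty_{2q}} \leq 1/2$ combined with Lemma~\ref{l:hydro-near_equilibrium} ensures that $f$ itself satisfies \eqref{e:hydro}, so the pointwise lemmas from Section~\ref{s:upper_bounds} (originally stated for $M+f^h$) apply directly to $f$ with the choice $f^h = f-M$.

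For the first integral I would apply the Carleman formulas \eqref{e:Qs}--\eqref{e:Qns} together with the algebraic identity $(\bar\varphi' - \bar\varphi)\bar\varphi = -\tfrac{1}{2}(\bar\varphi' - \bar\varphi)^2 + \tfrac{1}{2}(\bar\varphi'^2 - \bar\varphi^2)$; symmetrizing the second piece by swapping $v \leftrightarrow v'$ and invoking the cancellation identity \eqref{e:cancellation} produces
\[
\int Q(f,\bar\varphi)\bar\varphi\,\dd v = -\tfrac{1}{2}\iint K_f(\bar\varphi' - \bar\varphi)^2\,\dd v'\dd v + \tfrac{C}{2}\int (f*|\cdot|^\gamma)\bar\varphi^2\,\dd v,
\]
and the residual convolution is dominated by $C\int\bar\varphi^2\langle v\rangle^{\gamma_+}\dd v$ via Lemma~\ref{l:convolution-C0}. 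This already yields the Dirichlet form with the favorable coefficient $-\tfrac{1}{2}$.

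Next I would decompose the commutator $E = I + II$ as in the proof of Lemma~\ref{l:commutator}. The diagonal piece $II(v) = \bar\varphi(v)\int(\langle v\rangle^q\langle v'\rangle^{-q} - 1) K_f\,\dd v' = \bar\varphi(v)\langle v\rangle^q Q_s(f,\langle\cdot\rangle^{-q})(v)$ is exactly where the good negative coefficient is produced: Corollary~\ref{c:Q(M+f,g)} applied to $f$ supplies the pointwise bound $\langle v\rangle^q Q_s(f,\langle\cdot\rangle^{-q})(v) \leq a_q - b_q\langle v\rangle^{\gamma_+}$ with $b_q \approx q^s$, hence $\int II\,\bar\varphi\,\dd v \leq \int(a_q - b_q\langle v\rangle^{\gamma_+})\bar\varphi^2\,\dd v$. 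For the off-diagonal piece $I(v) = \int(\langle v\rangle^q\langle v'\rangle^{-q} - 1)(\bar\varphi' - \bar\varphi)K_f\,\dd v'$, Cauchy--Schwarz against the measure $K_f\,\dd v'\,\dd v$ together with the pointwise bound~\eqref{e:commutator1} gives, for any $\eta > 0$,
\[
\Bigl|\int I\bar\varphi\,\dd v\Bigr| \leq \eta\iint K_f(\bar\varphi'-\bar\varphi)^2\,\dd v'\dd v + \frac{C(q)}{4\eta}\int\bar\varphi^2\langle v\rangle^{\gamma_+}\,\dd v,
\]
absorbing part of $I$ into the Dirichlet form.

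Collecting terms yields a bound of the shape $-c_q\iint K_f(\bar\varphi'-\bar\varphi)^2 + \int\bigl(a_q - (b_q - \mathrm{residue})\langle v\rangle^{\gamma_+}\bigr)\bar\varphi^2\,\dd v$, where $c_q = \tfrac{1}{2} - \eta$. The main obstacle will be tracking the precise $q$-dependence of the constants in~\eqref{e:commutator1} and in the Cauchy--Schwarz step on $I$ sharply enough to ensure that the good coefficient $b_q \approx q^s$ genuinely dominates the cumulative residue on $\langle v\rangle^{\gamma_+}\bar\varphi^2$ once $q_0$ is taken large; any $q$-dependent constants that are not multiplied by $\langle v\rangle^{\gamma_+}$ are harmless since they can be absorbed into the unbounded constant $a_q$ permitted by the statement.
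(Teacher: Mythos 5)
Your first two structural steps match the paper's proof modulo reorganization: the splitting into an unweighted quadratic form in $\bar\varphi=\vv^q\varphi$ plus a commutator, the computation of $\int Q(f,\bar\varphi)\bar\varphi\,\dd v$ via the cancellation lemma \eqref{e:cancellation}, and the identification $II(v)=\bar\varphi(v)\,\vv^q Q_s(f,\langle\cdot\rangle^{-q})(v)$ are all correct and equivalent to what the paper writes in display form.

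The gap is in the treatment of $I$, and it is not a matter of ``tracking constants'' but a genuine obstruction. With the standard Cauchy--Schwarz $AB\le \eta A^2+\tfrac1{4\eta}B^2$ and $A=\bar\varphi'-\bar\varphi$, the coefficient $\tfrac1{4\eta}$ in front of the residue $\iint(a-1)^2\bar\varphi^2K_f$ (where $a:=\vv^q\langle v'\rangle^{-q}$) cannot be made smaller than $\tfrac12$, since you need $\eta<\tfrac12$ to retain a negative Dirichlet coefficient $c_q=\tfrac12-\eta$. After you invoke the pointwise bound \eqref{e:commutator1}, this residue becomes $\ge\tfrac12 C(q)\int\vv^{\gamma_+}\bar\varphi^2$. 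But $C(q)$ is not uniform in $q$: it comes from applying Corollary \ref{c:Qsbound} to the weight $\vv^{-q}$, and $[\vv^{-q}]_{C^2(v)}\approx q^2\vv^{-q-2}$ forces $C(q)\approx q^{2s}$. Meanwhile the good coefficient $b_q$ from $II$ is only $\approx q^s$ by Lemma \ref{l:good-term}. Hence $\tfrac12 C(q)\gg b_q$ for $q$ large, and the final coefficient on $\vv^{\gamma_+}\bar\varphi^2$ is \emph{positive}, the opposite of what the lemma asserts.

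What the paper does instead, and what you are missing, is the algebraic identity
\[
\tfrac12(a-1)^2+(a-1)=\tfrac12(a^2-1),\qquad a=\vv^q\langle v'\rangle^{-q}.
\]
The paper applies Young with weights $\tfrac{1+\eps}2$ and $\tfrac1{2(1+\eps)}$ (in your language, $\eta=\tfrac1{2(1+\eps)}$, close to $\tfrac12$, \emph{not} small), so that the quadratic residue from Young is $\tfrac{1+\eps}2(a-1)^2=\tfrac12(a-1)^2+\tfrac\eps2(a-1)^2$. The first piece is recombined with the linear term $(a-1)$ from $II$ \emph{before} any pointwise estimate, producing $\tfrac12(a^2-1)$, which equals $\tfrac12\vv^{2q}Q_s(f,\langle\cdot\rangle^{-2q})$ and is handled in one stroke by Corollary \ref{c:Q(M+f,g)} at exponent $2q$, with $b_{2q}\approx q^s$. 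The only leftover is $\tfrac\eps2(a-1)^2$, which carries the tiny prefactor $\eps$; taking $\eps\lesssim q^{-s}$ makes its contribution $\lesssim\eps q^{2s}\lesssim q^s$, absorbed by $b_{2q}$, at the cost of $c_q\approx\eps\approx q^{-s}$ (which is fine since the lemma allows $c_q$ to depend on $q$). In short: bounding $I$ and $II$ separately destroys a cancellation that is essential; they must be recombined first. Secondarily, you also gloss over the replacement of $\langle\cdot\rangle^{-q}$ by the smooth radial $g$ of \eqref{e:g-def}: Corollary \ref{c:Q(M+f,g)} is stated for $g$, and the paper spends a paragraph estimating $Q_s(f,\langle\cdot\rangle^{-2q}-g)$; this is minor but not free.
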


\begin{proof}
We expand the left hand side using \eqref{e:Qs} and \eqref{e:Qns}
\begin{align*}
\int_{\R^d} Q(f,\varphi) \varphi(v) \langle v \rangle^{2q} \dd v &= \int_{\R^d} C (f \ast |\cdot|^\gamma) \varphi(v)^2 \langle v \rangle^{2q} \dd v + \iint_{\R^d \times \R^d} K_f(v,v') (\varphi'-\varphi) \langle v \rangle^{2q} \varphi  \dd v' \dd v \\
&= I + \iint_{\R^d \times \R^d} K_f(v,v') (\langle v' \rangle^q \varphi'- \langle v \rangle^q \varphi) \langle v \rangle^{q} \varphi \dd v' \dd v \\
& \phantom{= I} + \iint_{\R^d \times \R^d} K_f(v,v') (\langle v \rangle^q \langle v' \rangle^{-q} -1)  \langle v' \rangle^{q} \varphi'  \langle v \rangle^{q} \varphi \dd v' \dd v.
\end{align*}
Here, we write $\varphi'$ and $\varphi$ to denote $\varphi(v')$ and $\varphi(v)$ respectively. Also, we wrote $I$ to denote the first term
\[ I := \int_{\R^d} C (f \ast |\cdot|^\gamma) \varphi(v)^2 \langle v \rangle^{2q} \dd v. \]
We continue the computation using \eqref{e:cancellation} and the change of variables $v \leftrightarrow v'$, and get 
\begin{align*}
\int_{\R^d} Q(f,\varphi) \varphi(v) \langle v \rangle^{2q} \dd v &= \frac 1 2 I - \frac 12 \iint_{\R^d \times \R^d} (\langle v' \rangle^{q} \varphi' - \langle v \rangle^{q} \varphi)^2 K_f(v,v') \dd v' \dd v  \\
& \phantom{= \frac 1 2 I} + \iint_{\R^d \times \R^d} K_f(v,v') (\langle v \rangle^q \langle v' \rangle^{-q} -1)  \langle v' \rangle^{q} \varphi'  \langle v \rangle^{q} \varphi \dd v' \dd v \\
&= \frac 1 2  I - \frac 12 \iint_{\R^d \times \R^d} (\langle v' \rangle^{q} \varphi' - \langle v \rangle^{q} \varphi)^2 K_f(v,v') \dd v' \dd v  \\
& \phantom{= \frac 1 2 I} + \iint_{\R^d \times \R^d} K_f(v,v') (\langle v \rangle^q \langle v' \rangle^{-q} -1)  (\langle v' \rangle^{q} \varphi' - \langle v \rangle^{q} \varphi ) \langle v \rangle^{q} \varphi \dd v' \dd v \\
& \phantom{= \frac 1 2 I} + \iint_{\R^d \times \R^d} K_f(v,v') (\langle v \rangle^q \langle v' \rangle^{-q} -1)  \langle v \rangle^{2q} \varphi^2 \dd v' \dd v.
\end{align*}
Let us pick $\eps\in (0,1)$ small enough, depending on $q$, and use that 
\[ (\langle v \rangle^q \langle v' \rangle^{-q} -1)  (\langle v' \rangle^{q} \varphi' - \langle v \rangle^{q} \varphi ) \langle v \rangle^{q} \varphi \leq \frac{1+\eps}2 (\langle v \rangle^q \langle v' \rangle^{-q} -1)^2  \langle v \rangle^{2q} \varphi^2 + \frac 1 {2(1+\eps)} (\langle v' \rangle^{q} \varphi' - \langle v \rangle^{q} \varphi )^2 .\]
Therefore,
\begin{equation}\label{e:coercivity1}
\begin{split}
\int_{\R^d} Q(f,\varphi) \varphi(v) \langle v \rangle^{2q} \dd v &\leq \frac 1 2 I - \frac \eps {2(1+\eps)} \iint_{\R^d \times \R^d} (\langle v' \rangle^{q} \varphi' - \langle v \rangle^{q} \varphi)^2 K_f(v,v') \dd v' \dd v  \\
& \phantom{= \frac 1 2 I} + \iint_{\R^d \times \R^d} K_f(v,v') \langle v \rangle^{2q} \varphi^2 \left\{ \frac 12 (\langle v \rangle^{2q} \langle v' \rangle^{-2q}-1) + \frac \eps 2 (\langle v \rangle^q \langle v' \rangle^{-q} -1)^2 \right\} \dd v' \dd v.
\end{split}
\end{equation}
For the last integral on the right, we first write
\begin{equation}\label{e:good-term1}
 \iint_{\R^d\times\R^d} K_f(v,v') \vv^{2q} \varphi^2 (\vv^{2q}\langle v'\rangle^{-2q} - 1)\dd v' \dd v=  \int_{\R^d} \vv^{4q} \varphi^2  Q_s(f, \vv^{-2q}) \dd v.
 \end{equation}
Let $g:\R^d\to \R$ be a smooth function which equals $|v|^{-2q}$ when $|v|>1$, as in \eqref{e:g-def} with exponent $2q$. We have a good upper bound for $Q_s(f,g)$ from Corollary \ref{c:Q(M+f,g)}, so we add and subtract $g$ inside $Q_s$:
\[ Q_s(f,\langle \cdot \rangle^{-2q})(v) = Q_s(f,g)(v) + Q_s(f,\langle \cdot \rangle^{-2q} - g)(v) \leq (a_q - b_q\vv^{\gamma_+}) \vv^{-2q} + Q_s(f,\langle \cdot \rangle^{-2q} - g)(v),\]
with $b_q \approx q^s$. We used Corollary \ref{c:Q(M+f,g)}\footnote{In the notation of Corollary \ref{c:Q(M+f,g)}, the current $f$ is denoted $M+f$.} and $g(v) \lesssim \vv^{-2q}$ in the last inequality. Next, we must estimate the resulting remainder term. Define $G(v) := \vv^{-2q} - g(v)$. If $|v|\leq 2$, then $Q_s(f,G)(v)$ is bounded by a constant depending only on $q$ and the constants in \eqref{e:hydro} (for example, by Corollary \ref{c:Qsbound}). Therefore, we assume $|v|> 2$ for our estimate of $Q_s(f,G)(v)$. We have
\begin{equation}\label{e:QsG}
\begin{split}
Q_s(f,\langle \cdot \rangle^{-2q} - g) &= 
 \left(\int_{B_{|v|/2}(v)}  + \int_{\R^d \setminus B_{|v|/2}(v)}\right) K_f(v,v') [ G' - G]\dd v'.
\end{split}
\end{equation}
For the first integral, we Taylor expand $G$ at $v$ and use the symmetry property $K_f(v,v') = K_f(v,v - (v'-v))$ to see that the first-order term integrates to zero. A straightforward calculation using $|\vv - |v|| \lesssim \vv^{-1}$ and $|\vv^{-p} - |v|^{-p}| \lesssim p \vv^{-p-2}$ shows that
\[\|D_v^2 G\|_{L^\infty(B_{|v|/2}(v))} = \| D_v^2 \vv^{-2q} - D_v^2 |v|^{-2q}\|_{L^\infty(B_{|v|/2}(v))} \lesssim  q^3 \vv^{-2q-4}.\]
Therefore,
\[\begin{split}
 \int_{B_{|v|/2}(v)} K_f(v,v') [G' - G] \dd v' &\lesssim \|D_v^2 G\|_{L^\infty(B_{|v|/2}(v))} \int_{B_{|v|/2}(v)} K_f(v,v') \dd v'\\
&\lesssim q^3\vv^{-2q-4} |v|^{2-2s} [f\ast |\cdot|^{\gamma+2s}](v)\\
&\lesssim q^3 \vv^{-2q-2+\gamma},
\end{split}  \]
by Lemma \ref{l:K_upper_bound}. To estimate the second integral in \eqref{e:QsG}, recall that $g(v) \geq \vv^{-2q}$ for all $v$, which implies $G(v') \leq 0$. This leads to
\[
\begin{split}
 \int_{\R^d \setminus B_{|v|/2}(v)} K_f(v,v') [G' - G] \dd v' &\lesssim -\int_{\R^d \setminus B_{|v|/2}(v)} K_f(v,v') G(v) \dd v'\\
 & \lesssim -G(v) |v|^{-2s} (f\ast |\cdot|^{\gamma+2s})(v)\\
 &\lesssim q\vv^{-2q -2 + \gamma},
 \end{split}
\]
using Lemma \ref{l:K_upper_bound} and $-G(v) = |v|^{-2q}-  \vv^{-2q} \lesssim q \vv^{-2q-2}$. Since $\gamma \leq 2$, we finally have 
$Q_s(f,G)(v) \lesssim a_q \vv^{-2q}$. 
  Returning to \eqref{e:good-term1}, we have shown
\begin{equation}\label{e:Kf-integral}
 \iint_{\R^d\times\R^d} K_f(v,v') \vv^{2q} \varphi^2 (\vv^{2q}\langle v'\rangle^{-2q} - 1)\dd v' \dd v \lesssim  \int_{\R^d} \vv^{4q} \varphi^2 (a_q - b_q\vv^{\gamma_+}) \vv^{-2q} \dd v,
\end{equation}
with $b_q \approx q^s$.

Next, for the last term in \eqref{e:coercivity1}, we proceed as in the proof of Lemma \ref{l:commutator} (see \eqref{e:commutator1}) to write
\begin{equation}\label{e:Kf-integral2}
 \frac \eps 2 \iint_{\R^d\times\R^d} K_f(v,v') \vv^{2q} \varphi^2 (\vv^q\langle v'\rangle^{-q} - 1)^2 \dd v' \dd v \leq \frac \eps 2 C \int_{\R^d} \vv^{2q} \varphi^2 \|f\|_{L^1_{\gamma+2s}} \vv^{\gamma_+} \dd v,
\end{equation}
and for $\eps>0$ sufficiently small depending on $q$, \eqref{e:coercivity1} becomes, using \eqref{e:Kf-integral} and \eqref{e:Kf-integral2},
\begin{align*}
\int_{\R^d} Q(f,\varphi) \varphi(v) \langle v \rangle^{2q} \dd v &\leq \frac 1 2 I - \frac \eps {2(1+\eps)} \iint_{\R^d \times \R^d} (\langle v' \rangle^{q} \varphi' - \langle v \rangle^{q} \varphi)^2 K_f(v,v') \dd v' \dd v  \\
& \phantom{= \frac 1 2 I} + \int_{\R^d} (a_q - b_q \langle v \rangle^{\gamma_+}) \langle v \rangle^{2q} \varphi^2 \dd v.
\end{align*}
Here, since $b_q$ goes to $+\infty$ as $q \to \infty$, it absorbs the first term $\frac 1 2 I$ for large enough $q$. We conclude the proof setting $c_q = \eps/(2(1-\eps))$.
\end{proof}

Recall the seminorm $\|\cdot\|_{\dot N^{s,\gamma}}$ defined in \eqref{e:Ns-gamma}. The next lemma makes precise the (expected) fact that $\|\vv^q g\|_{\dot N^{s,\gamma}}$ controls $\|\vv^p g\|_{\dot N^{s,\gamma}}$ when $q>p$: 
\begin{lemma}\label{l:moments}
For any $q>p\geq 0$, one has
\[   \|\vv^q g \|_{\dot N^{s,\gamma}}^2 \geq \frac 1 2 \|\vv^p g\|_{\dot N^{s,\gamma}}^2 - C_{q-p} \|g\|_{L^2_{q+(\gamma+2s-2)/2}}^2.\]
\end{lemma}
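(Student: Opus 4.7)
The plan is to split the difference inside the seminorm via
\[ \langle v'\rangle^q g(v') - \vv^q g(v) = \langle v'\rangle^{q-p}\bigl(\langle v'\rangle^p g(v') - \vv^p g(v)\bigr) + \vv^p g(v)\bigl(\langle v'\rangle^{q-p} - \vv^{q-p}\bigr), \]
and then apply the elementary inequality $(a+b)^2 \geq \tfrac12 a^2 - b^2$ (which follows from $\tfrac12(a+2b)^2 \geq 0$). The first resulting term, together with $\langle v'\rangle^{2(q-p)} \geq 1$, will immediately give the desired $\tfrac12\|\vv^p g\|_{\dot N^{s,\gamma}}^2$ after integration against the weight $(\vv\langle v'\rangle)^{(\gamma+2s+1)/2}/d(v,v')^{d+2s}$ on $\{d(v,v') \leq 1\}$. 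So the whole game is to bound the error term
\[ E := \int_{\R^d} \vv^{2p}g(v)^2 \Bigl[\int_{d(v,v')\leq 1} (\langle v'\rangle^{q-p} - \vv^{q-p})^2 \, \frac{(\vv\langle v'\rangle)^{(\gamma+2s+1)/2}}{d(v,v')^{d+2s}} \dd v'\Bigr] \dd v \]
by $C_{q-p}\|g\|_{L^2_{q+(\gamma+2s-2)/2}}^2$.

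The key observation for the inner integral is that on $\{d(v,v') \leq 1\}$ we have $||v|^2-|v'|^2|\leq 2d(v,v')$, hence $\langle v'\rangle \approx \vv$ with a universal constant, and
\[ |\langle v'\rangle - \vv| = \frac{||v|^2-|v'|^2|}{\langle v'\rangle+\vv} \lesssim \frac{d(v,v')}{\vv}. \]
Combining with the mean value theorem applied to $x\mapsto x^{q-p}$, I get the pointwise estimate $|\langle v'\rangle^{q-p}-\vv^{q-p}| \leq C_{q-p}\vv^{q-p-2}\, d(v,v')$ on the domain of integration, and the weight becomes $\approx \vv^{\gamma+2s+1}$.

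The remaining obstacle — the step that really uses the anisotropy of $d$ — is to bound $\int_{d(v,v')\leq 1} d(v,v')^{2-d-2s}\dd v'$ by $C\vv^{-1}$. I would do this by a change of variables that decomposes $w=v'-v$ into its components parallel and perpendicular to $v$, setting $u_\parallel = \vv\, w_\parallel$ and $u_\perp = w_\perp$. Since $d(v,v')^2 \approx w_\perp^2 + \vv^2 w_\parallel^2 = |u|^2$ in this regime, the Jacobian yields $\dd v' = \vv^{-1}\dd u$, and the integral reduces to $\vv^{-1}\int_{|u|\leq 1}|u|^{2-d-2s}\dd u$, which is a bounded constant since $s<1$.

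Assembling these pieces, the inner bracket in $E$ is at most $C_{q-p}\vv^{2(q-p)-4+\gamma+2s+1}\cdot \vv^{-1} = C_{q-p}\vv^{2(q-p)+\gamma+2s-4}$, whence
\[ E \leq C_{q-p}\int_{\R^d} \vv^{2q+\gamma+2s-4}g(v)^2\dd v \leq C_{q-p}\|g\|_{L^2_{q+(\gamma+2s-2)/2}}^2, \]
using $\vv\geq 1$ in the last step. The main obstacle is precisely this anisotropic change-of-variables argument and the careful tracking that the stretching factor $\vv^{-1}$ it produces is exactly what is needed to match the claimed $L^2$ weight.
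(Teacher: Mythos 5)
Your proof is correct and follows the same overall skeleton as the paper's: write the difference $\langle v'\rangle^q g' - \vv^q g$ as the sum of a main term plus a commutator term, apply $(a+b)^2 \geq \tfrac12 a^2 - b^2$, use $\langle v'\rangle^{2(q-p)}\geq 1$ on the main term, and then control the commutator term by exploiting the anisotropy of $d$ to produce the factor $\vv^{-1}$ from the inner $v'$-integral. Where your route genuinely diverges is in the two technical steps. First, for the pointwise bound on $\langle v'\rangle^{q-p} - \vv^{q-p}$, you use $||v|^2-|v'|^2| \leq 2d(v,v')$ and the identity $\langle v'\rangle - \vv = (|v'|^2-|v|^2)/(\langle v'\rangle + \vv)$ to get $|\langle v'\rangle^{q-p} - \vv^{q-p}| \lesssim_{q-p} \vv^{q-p-2} d(v,v')$, whereas the paper (after reducing to $p=0$) uses the cruder $|\langle v'\rangle - \vv| \leq |v'-v| \leq d(v,v')$, losing a factor $\vv$. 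Your estimate is sharper and yields an extra $\vv^{-2}$ of decay at the end, which you then discard; the paper's version is exactly borderline. Second, and more significantly, for the anisotropic integral $\int_{\{d(v,v')\leq 1\}} d(v,v')^{2-d-2s}\dd v' \lesssim \vv^{-1}$, the paper invokes the metric equivalence $d \approx d_a$ from \cite[Lemma A.1]{imbert2019global} and then a rescaling argument to deal with the mismatch between the ellipse $E_0$ and the set $\{d\leq 1\}$; your proof instead verifies directly that $d(v,v')^2 \approx |w_\perp|^2 + \vv^2 w_\parallel^2$ on $\{d\leq 1\}$ (where $w=v'-v$, decomposed parallel/perpendicular to $v$) and performs the linear change of variables $u_\parallel = \vv w_\parallel$, $u_\perp = w_\perp$ with Jacobian $\vv$. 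This is more elementary and self-contained, at the cost of needing to justify the two-sided comparison $d^2 \approx |w_\perp|^2 + \vv^2 w_\parallel^2$ (which you state but do not prove; it does hold on $\{d\leq 1\}$ once one splits into the cases $|v||w_\parallel| \gtrless |w|^2$). Both routes close without difficulty.
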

\begin{proof} 
We prove the lemma in the case $p=0$. The general case follows by replacing $g$ with $\vv^p g$ and replacing $q$ with $q-p$. 

Writing $(\langle v'\rangle^q g' - \vv^q g)^2 = [ \langle v'\rangle^q (g'-g) + g(\langle v'\rangle^q - \vv^q)]^2$ and using the inequality $(a+b)^2 \geq \frac 1 2 a^2 - b^2$, we have
\[ \begin{split}
\|\vv^q g\|_{\dot N^{s,\gamma}}^2 &= \iint_{\R^d\times\R^d} (\vv \langle v'\rangle)^{(\gamma+2s+1)/2} \frac {(\langle v'\rangle^q g' - \vv^q g)^2}{d(v,v')^{d+2s}} \one_{\{d(v,v')\leq 1\}} \dd v' \dd v\\
&\geq  \frac 1 2 \iint_{\R^d\times\R^d} (\vv \langle v'\rangle)^{(\gamma+2s+1)/2} \frac {\langle v'\rangle^{2q} (g'-g)^2}{d(v,v')^{d+2s}} \one_{\{d(v,v')\leq 1\}} \dd v' \dd v\\
&\quad - \iint_{\R^d\times\R^d} (\vv \langle v'\rangle)^{(\gamma+2s+1)/2} \frac {g(v)^2(\langle v'\rangle^q - \vv^q)^2}{d(v,v')^{d+2s}} \one_{\{d(v,v')\leq 1\}} \dd v' \dd v\\
&\geq \frac 1 2 \|g\|_{\dot N^{s,\gamma}}^2 - \int_{\R^d} g(v)^2 \vv^{(\gamma+2s+1)/2} \int_{\{v': d(v,v') \leq 1\}} \langle v'\rangle^{(\gamma+2s+1)/2} \frac {(\langle v'\rangle^q - \vv^q)^2}{d(v,v')^{d+2s}}\dd v' \dd v,
\end{split}\]
using the crude inequality $\langle v'\rangle^{2q} \geq 1$ in the first term. For the second term, since $|v-v'| \leq d(v,v')$, we have
\[\begin{split}
 \int_{\{v': d(v,v') \leq 1\}} \langle v'\rangle^{(\gamma+2s+1)/2} \frac {(\langle v'\rangle^q - \vv^q)^2}{d(v,v')^{d+2s}}\dd v'
 & \lesssim \vv^{(\gamma+2s+1)/2} \int_{\{v': d(v,v') \leq 1\}} \frac { (q|v|^{q-1})^2 |v'-v|^2}{d(v,v')^{d+2s}} \dd v' \\
 &\lesssim q^2 \vv^{2q-2 + (\gamma+2s+1)/2}  \int_{\{v': d(v,v') \leq 1\}} d(v,v')^{2-2s-d} \dd v'.
 \end{split}\]
 To estimate the last integral, we use the change of variables introduced in \cite{imbert2019global}. For any $v\in \R^d$ with $|v|>2$, define the linear transformation $T_0:\R^d\to \R^d$ by
 \[ T_0(av + w) := \frac a {|v|} v + w \quad \text{ where } w \cdot v = 0, a\in \R.\]
 Letting $E_0 = v+ T_0(B_1)$, we see that $E_0$ is an ellipse centered at $v$ with radius $1/|v|$ in the $v$ direction and $1$ in directions perpendicular to $v$. For $v_1,v_2\in E_0$, the distance $d_a$ is defined by $d_a(v_1,v_2) = |T_0^{-1}(v_1-v_2)|$. From \cite[Lemma A.1]{imbert2019global}, there is a universal constant $c>0$ (by increasing the constant, we can take $c>1$) with
  \begin{equation}\label{e:equiv}
   \frac 1 c d_a(v_1,v_2) \leq d(v_1,v_2) \leq c d_a(v_1,v_2), \quad v_1, v_2 \in E_0.
   \end{equation}
 Since $E_0$ is not exactly a subset of $\{v' : d(v,v') \leq 1\}$, in order to use this equivalence of metrics in the above integral, we must first rescale in the $v'$ variable. We claim that for $c>1$ and $v'$ such that $d(v,v')\leq 1$, there holds 
 \begin{equation}\label{e:d-claim}
d(v, v+ (v'-v)/c) \leq \frac 1 c d(v,v').
\end{equation}
Indeed,  writing $v' - v = \alpha v + w$ with $w\cdot v = 0$ and $|\alpha|\leq 1$ (which follows from $d(v,v')\leq 1$),  a series of calculations shows
\[ \begin{split}
 \left(|v|^2 - |v+(v'-v)/c|^2\right)^2 &= \frac 1 {c^4} \left( (\alpha^2 + 2\alpha c)|v|^2+ |w|^2\right)^2\\
&\leq \frac 1 {c^2} \left( (\alpha^2 + 2\alpha)|v|^2 + |w|^2\right)^2\\
&= \frac 1 {c^2} \left( |v|^2 - |v'|^2\right)^2.
\end{split}\]
The inequality to get to the second line follows from $|\alpha|\leq 1$ and $c>1$. Together with $|v-(v+(v'-v)/c)|^2 = |v'-v|^2/c^2$ and the definition of $d(v,v')$, this implies \eqref{e:d-claim}.

Now,  letting $\tilde v = v + (v'-v)/c$ and using \eqref{e:d-claim}, we have 
\[\begin{split}
 \int_{\{v': d(v,v') \leq 1\}} d(v,v')^{2-2s-d} \dd v' &\leq \int_{\{\tilde v: d(v,\tilde v) \leq 1/c\}} (c d(v,\tilde v))^{2-2s-d}  c^3 \dd \tilde v\\
&\leq  c^{5-2s-d}\int_{\{\tilde v: d_a(v,\tilde v) \leq 1\}} c^{d+2s -2}|T_0^{-1}(v-\tilde v)|^{2-2s-d} \dd \tilde v\\
&= c^{3}|v|^{-1}\int_{B_1(0)}  |\hat v|^{2-2s-d} \dd \hat v\\
&\lesssim |v|^{-1},
\end{split}\]
 where we used \eqref{e:equiv} to get to the second line, and the change of variables $\tilde v = v + T_0\hat v$ (with Jacobian $|v|^{-1}$) to get to the third line.
 
The above inequality was derived under the assumption $|v|>2$.  If $|v|\leq 2$, then 
\[\int_{\{v' : d(v,v') \leq 1\}} d(v,v')^{2-2s-d} \dd v' \leq \int_{\{v' : d(v,v') \leq 1\}} |v-v'|^{2-2s-d} \dd v' \lesssim 1 \lesssim |v|^{-1},\] 
in this case as well.

 Finally, we obtain
 \[ \begin{split}
\|\vv^q g\|_{\dot N^{s,\gamma}}^2 &\geq \frac 1 2 \|g\|_{\dot N^{s,\gamma}}^2 - C q^2 \int_{\R^d} g(v)^2 \vv^{2q + \gamma +2s - 2} \dd v,
\end{split} \]
as claimed.
\end{proof}

Now we are ready to establish the propagation of weighted Sobolev norms:

\begin{lemma}\label{l:sobolev-propagation}
Let $f^h$ be the approximate solution constructed in Section \ref{s:approximate_problem}, with initial data $f_0$. There exist $q_1> q_0>0$ such that if $f_0 \in H^4_{q_0}\cap L^\infty_{q_1}(\T^3\times\R^3)$ and $\|f_0\|_{L^\infty_{q_1}} < 1/2$, then there exists $T_1>0$ depending on $\|f_0\|_{H^4_{q_0}(\T^3\times\R^3)}$ and $\|f_0\|_{L^\infty_{q_1}(\T^3\times\R^3)}$, such that $f^h(t,\cdot,\cdot)\in H^4_{q_0}(\T^3\times \R^3)$ for all $t\in [0,T_1]$.

If, in addition, $f_0$ lies in $H^k_q(\T^3\times\R^3)$ for some $k\geq 4$ and $q>0$, and also $f_0 \in L^\infty_m$ for some $m$ large enough depending on $k$ and $q$, then $f^h(t,\cdot,\cdot) \in H^k_q(\T^3\times\R^3)$ for all $t\in [0,T_1]$, with
\[ \|f^h(t,\cdot,\cdot)\|_{H^k_q(\T^3\times\R^3)}\leq \|f_0\|_{H^k_q(\T^3\times\R^3)} \exp\left(\int_0^t C_{k,q}(s) \dd s\right), \quad t\in [0,T_1],\]
for some integrable functions $C_{k,q}(s)>0$ depending on $k$, $q$, and $\|f_0\|_{L^\infty_m}$. The time $T_1>0$ depends \textbf{only} on $\|f_0\|_{H^4_{q_0}}$ and $\|f_0\|_{L^\infty_{q_1}}$, regardless of $k$ and $q$.
\end{lemma}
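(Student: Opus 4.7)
My proposal to prove Lemma \ref{l:sobolev-propagation} is via a weighted-Sobolev energy estimate on each space-homogeneous subinterval in $\mathcal D$, combined with cheap control on the transport intervals in $\mathcal T$. Define
\[ E_{k,q}(t) := \sum_{|\alpha|\leq k} \|\vv^q \partial^\alpha f^h(t,\cdot,\cdot)\|_{L^2(\T^3\times\R^3)}^2, \]
summing over combined $(x,v)$ multi-indices $\alpha$. On each interval in $\mathcal T$ of length $h$, pure transport preserves $L^2$ exactly but mixes $v$-derivatives with $x$-derivatives with coefficient $O(h)$, and the $x$-mollification \eqref{e:mollification} is non-expansive on every $H^k_q$ (convolution against a nonnegative symmetric kernel of unit mass). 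Accumulated over $O(h^{-1})$ such intervals, these contribute only a bounded multiplicative factor $e^{C_k T}$ that may be absorbed into the final Grönwall constant. The real work happens on intervals in $\mathcal D$, where for each fixed $x$ the equation $\partial_t f^h = 2Q(M+f^h,M+f^h)$ holds. Since $Q$ is translation-invariant in $v$, it satisfies Leibniz in both $x$- and $v$-derivatives, and using $\partial^\beta M = 0$ for any $\beta$ containing an $x$-derivative, the top-order part is isolated as
\[ \partial_t \partial^\alpha f^h = 2Q(M+f^h,\partial^\alpha f^h) + 2Q(\partial^\alpha f^h,M) + 2\sum_{0<\beta<\alpha}\binom{\alpha}{\beta}Q(\partial^\beta f^h,\partial^{\alpha-\beta}f^h). \]

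For the base case $k=4$, $q=q_0$, I would pair with $\vv^{2q_0}\partial^\alpha f^h$ and integrate in $x,v$. The $L^\infty_{q_1}$-smallness of $f^h$ (valid uniformly in $h$ by Lemma \ref{l:q-upper-bound} when $q_1 \geq 2q_0$) furnishes the hypothesis of Lemma \ref{l:magic_negative_term_2}, which processes $Q(M+f^h,\partial^\alpha f^h)$ and supplies the coercive Dirichlet form $-c_{q_0}\|\vv^{q_0}\partial^\alpha f^h\|_{\dot N^{s,\gamma}}^2$ together with the negative moment $-b_{q_0}\int \vv^{2q_0+\gamma_+}|\partial^\alpha f^h|^2$ with $b_{q_0}\approx q_0^s$. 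The cross term $Q(\partial^\alpha f^h, M)$ is handled by commuting the weight through $Q$ via Lemma \ref{l:commutator} and applying Proposition \ref{p:trilinear}, its errors being absorbable into the coercive part once $q_0$ is large. The nonlinear tail $\sum_{0<\beta<\alpha}$ is controlled by Hölder splittings in $x$ based on Sobolev embedding $H^2_x \hookrightarrow L^\infty_x$ on $\T^3$ (valid since $2>3/2$): one factor is placed in $L^\infty_x(L^1_{v,\gamma+2s+2})$ after paying two $x$-derivatives and using Cauchy--Schwarz in $v$ with the $q_0$-weight, while the other two factors are contracted in $L^2_x$ against the coercive Dirichlet form through Proposition \ref{p:trilinear} and Lemma \ref{l:commutator}. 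Every distribution of derivatives that arises fits inside $H^4_{q_0}$, and collecting terms yields
\[ \tfrac{d}{dt} E_{4,q_0}(t) \leq C_0\,E_{4,q_0}(t) + C_1\,E_{4,q_0}(t)^{3/2}, \]
whose ODE comparison furnishes $T_1>0$ depending only on $E_{4,q_0}(0)$ and $\|f_0\|_{L^\infty_{q_1}}$, uniform in $h$.

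For higher $k$ and $q$ I would proceed by induction on $k$. The induction hypothesis at $k-1$, combined with the $L^\infty_m$ bound on $f^h$ propagated from $f_0$ (Lemma \ref{l:q-upper-bound} and, for $\gamma>0$, Section \ref{s:upper_bounds}), together with Lemma \ref{l:moment_interpolation}, controls all lower-order factors $\partial^\beta f^h$ with $|\beta|\leq k-1$ in suitable mixed norms. In any splitting of $\sum_{0<\beta<\alpha}$ with $|\alpha|\leq k$, at least one of $|\beta|,|\alpha-\beta|$ is at most $\lfloor k/2\rfloor\leq k-1$ and hence handled by the inductive bound, while the remaining high-order factor contributes linearly to $E_{k,q}$ via the trilinear and commutator machinery. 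This produces the linear differential inequality
\[ \tfrac{d}{dt} E_{k,q}(t) \leq C_{k,q}(t)\,E_{k,q}(t), \]
with $C_{k,q}(t)$ integrable on $[0,T_1]$; Grönwall yields precisely the exponential bound claimed, without shrinking $T_1$.

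The main obstacle will be calibrating the weight $q_0$. Every $v$-derivative of $Q(f^h,f^h)$ naturally produces moments of order $2q_0+\gamma_+$, so closing the base estimate requires $b_{q_0}\approx q_0^s$ from Lemma \ref{l:magic_negative_term_2} to strictly dominate all the constants appearing in the commutator and trilinear error terms. This forces $q_0$ (and a fortiori $q_1$) to be chosen large in terms of $\gamma$, $s$, $C_b$ and the constants in \eqref{e:hydro}; once this quantitative balance is verified, the remainder is organized bookkeeping over the piecewise construction, with the mollification and transport steps contributing only bounded factors to the final Grönwall constant.
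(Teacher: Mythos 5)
Your proposal follows essentially the same route as the paper: split into $\mathcal D$ and $\mathcal T$ intervals, take $\partial^\alpha$ of the space-homogeneous equation, pair against $\vv^{2q}\partial^\alpha f^h$, extract the coercive term from Lemma \ref{l:magic_negative_term_2}, process cross and tail terms via Lemma \ref{l:commutator}, Proposition \ref{p:trilinear}, and $H^2_x\hookrightarrow L^\infty_x$, close the base case by a superlinear ODE comparison for $T_1$, then run an induction on $k$ producing a linear Gr\"onwall so that $T_1$ never shrinks. The exact superlinear exponent ($E^{3/2}$ vs.\ $(7+s)/2$ in $\|f^h\|$) and the phrasing that transport ``mixes derivatives with coefficient $O(h)$'' are both slightly imprecise (the commutator $\partial^{\alpha-(e_i,-e_i)}f^h$ is order $k$, not $O(h)$; its \emph{time integral} over a length-$h$ interval is $O(h)$), but neither affects the conclusion.

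The one place where your sketch genuinely underspecifies the argument is the inductive hypothesis. To close the linear Gr\"onwall at level $k$ you need lower-order derivatives $\partial^{\alpha''}f^h$ with $|\alpha''|\leq k-1$ controlled in $N^{s,\gamma}_v$, not merely in $L^2_v$; the trilinear estimate demands this. A pointwise-in-time $H^{k-1}_q$ bound does \emph{not} yield this, since $N^{s,\gamma}$ costs $s$ extra derivatives. What does yield it is the coercive Dirichlet term one level down: the negative contribution $-\,c_q\|\vv^q\partial^{\alpha''}f^h\|_{\dot N^{s,\gamma}}^2$ in the $H^{k-1}_q$ estimate gives, after integration in $t$, precisely the $L^2_t L^2_x N^{s,\gamma}_v$ control that makes your $C_{k,q}(t)$ integrable. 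The paper therefore carries \emph{two} quantities through the induction: the pointwise-in-time $H^{k-1}_q$ bound \eqref{e:induction1} and the time-integrated $N^{s,\gamma}$ bound \eqref{e:induction2}. Relatedly, after the $H^4_{q_0}$ base case, the paper interpolates with $L^\infty_m$ (Lemma \ref{l:moment_interpolation}) to obtain $H^3_q$ bounds for \emph{all} $q$ on the same interval $[0,T_1]$, and this is what actually serves as the starting point for the double-indexed $(k,q)$ induction. You assert ``$C_{k,q}(t)$ integrable'' without identifying its source and assert $T_1$ is $q$-independent without the interpolation step, so these two strands need to be made explicit to close the proof.
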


We remark that the requirement $k\geq 4$ could possibly be improved, but we do not need to optimize it since we intend to use this lemma to guarantee $f^h$ is $C^\infty$ on some time interval whenever $f_0$ is $C^\infty$.

The point of Lemma \ref{l:sobolev-propagation} is to obtain an interval of time where the functions $f^h$ stay smooth, with bounds independent of $h$. It is important that it is the same time interval $[0,T_1]$ where all the estimates apply, for Sobolev norms of any order and any decay rate. In particular, applying this lemma with $f_0$ in the Schwartz space, we get that the solutions $f^h$ stay in the Schwartz space, uniformly in $h$, while $t \in [0,T_1]$.

\begin{proof}
For $q_1>0$ sufficiently large, we use Lemma \ref{l:q-upper-bound} to find some $T_0>0$ depending on $\|f_0\|_{L^\infty_{q_1}}$ so that the $L^\infty_{q_1}$ norm of $f^h$ is less than $1/2$ in $[0,T_0]$. The time $T_1$ in this proof will be less than or equal to this $T_0$. Therefore, throughout this proof, we will absorb $L^\infty_q$ norms of $f^h$ and $M+f^h$ into constants. We will also use without comment the following equivalence of norms, which follows from standard interpolation inequalities:
\[ \|u\|_{H^{k}_q(\T^3\times\R^3)}^2 \approx \|u\|_{L^2_q(\T^3\times\R^3)}^2 + \sum_{|\alpha| = k} \|\partial^\alpha u\|_{L_q^2(\T^3\times\R^3)}^2,  \quad k\in \mathbb N. \]

We have already established an upper bound for the first term as a consequence of Lemma \ref{l:q-upper-bound} applied to an exponent $q_0$ large enough. We are left to propagate an upper bound for the second term. Note also that for any $m < k$ and $q_0 > 0$, by picking $q_1$ sufficiently large, we interpolate an upper bound for the $\dot H^m_{q_0}$ norm,
\begin{equation} \label{e:sobolev_interpolation}
\|u\|_{\dot H^m_{q_0}} \leq \|u\|_{L_{q_1}^\infty}^{\frac{k-m}k} \|u\|_{\dot H^k}^{\frac mk}.
\end{equation}

The proof will proceed by induction on $k$, but we begin with some estimates that apply in both the base case and the inductive step. The precise inductive hypothesis will be given below.


\medskip

\noindent {\bf General estimates.} For some $k,q>0$, let $\alpha\in \N^{3+3}$ be a multi-index with total order $|\alpha|= k$.  Following the usual strategy for $L^2$-based energy estimates, we differentiate the equation for $f^h$ by $\partial^\alpha$, multiply by $\vv^{2q}\partial^\alpha f^h$, and integrate over $\T^3\times\R^3$.

Differentiating the transport equation by $\partial^\alpha$, we have
\[ \partial_t \partial^\alpha f^h= -2 v\cdot \nabla_x \partial^\alpha f^h - 2\sum_{i=1}^3 \one_{\alpha_{3+i}>0} (\partial^{\alpha-(e_i,-e_i)} f^h),\]
where $e_1 = (1,0,0)$, $e_2=(0,1,0)$, and $e_3 = (0,0,1)$, and $\alpha_{3+i}$ is the index corresponding to differentiation in $v_i$. Multiplying by $\vv^{2q}\partial^\alpha f^h$ and integrating, we have
\[ \begin{split}
 \frac 1 2 \frac d {dt} \iint_{\T^3\times\R^3} \vv^{2q} |\partial^\alpha f^h|^2 \dd v \dd x &\leq 2k \sum_{i=1}^3\iint_{\T^3\times\R^3} \vv^{2q} \one_{\alpha_{3+i}>0} (\partial^{\alpha-(e_i,-e_i)} f^h) \partial^\alpha f^h \dd v \dd x\\
&\lesssim k \|D^k f^h\|_{L^2_q(\T^3\times\R^3)}^2.
\end{split}\]
Adding up, we have shown 
\begin{equation}\label{e:transport-times}
   \frac d {dt} \|D^k f^h\|_{L^2_q(\T^3\times\R^3)}^2 \leq Ck \|D^k f^h\|_{L^2_q(\T^3\times\R^3)}^2, \quad t\in \mathcal T.
\end{equation}
Recall that $f^h$ has a jump discontinuity in time at every $t_i$ with $i$ odd. Let $f^h(t_i-)$ and $f^h(t_i+)$ denote the limit of $f^h(t,\cdot,\cdot)$ as $t\to t_i$ from the left and right respectively. From \eqref{e:mollification} it is clear that $\|f^h(t_i+)\|_{\dot H^k_q(\T^3\times\R^3)} \leq \|f^h(t_i-)\|_{\dot H^k_q(\T^3\times\R^3)}$ for any $k,q\geq 0$. Therefore, \eqref{e:transport-times} implies
\begin{equation}\label{e:i-odd}
 \|f^h(t_{i+1})\|_{\dot H^k_q(\T^3\times\R^3)} - \|f^h(t_i-)\|_{\dot H^k_q(\T^3\times\R^3)}  \leq Ck \int_{t_i}^{t_{i+1}} \|f^h(s)\|_{\dot H^k_q(\T^3\times\R^3)}^2 \dd t, \quad i \text{ odd.}    
 \end{equation}

If $t$ belongs to $\mathcal D$, differentiating \eqref{e:space-homogeneous-part} by $\partial^\alpha$ gives
\begin{equation*}
		\partial^\alpha \partial_t f^h 
		=  2\sum_{\alpha' + \alpha'' = \alpha} Q( \partial^{\alpha'}(M+ f^h), \partial^{\alpha''} f)  + 2\sum_{\alpha'+\alpha'' = \alpha} Q(\partial^{\alpha'} f^h, \partial^{\alpha''} M).
\end{equation*}
Note that $\partial^{\alpha''} M=0$ whenever $\alpha''$ contains any differentation in $x$.

The energy estimate takes the form
\begin{equation}\label{e:energy}
\begin{split}
	\frac{1}{2} \frac d {dt} \iint_{\T^3\times\R^3} \vv^{2q} |\partial^\alpha f^h|^2  \dd v \dd x &= 
2 \sum_{\alpha' + \alpha'' = \alpha} \iint_{\T^3\times\R^3} \vv^{2q} Q( \partial^{\alpha'}(M+f^h), \partial^{\alpha''}  f^h)  \partial^\alpha f^h  \dd v \dd x			\\
 &\quad +  2\sum_{\alpha' + \alpha'' = \alpha} \iint_{\T^3\times\R^3} \vv^{2q} Q( \partial^{\alpha'} f^h,  \partial^{\alpha''} M)   \partial^\alpha f^h \dd v \dd x .
\end{split}
\end{equation}
Starting with the last sum on the right, we apply the commutator estimate Lemma \ref{l:commutator} and the trilinear estimate Proposition \ref{p:trilinear} to each term:
\[ \begin{split}
\int_{\R^3} \vv^{2q} Q&( \partial^{\alpha'} f^h,  \partial^{\alpha''} M)   \partial^\alpha f^h \dd v\\
 &\leq \|\vv^{2q} Q( \partial^{\alpha'} f^h,  \partial^{\alpha''} M) - Q(  \partial^{\alpha'} f^h,  \vv^{2q} \partial^{\alpha''} M)\|_{L^2_{-\gamma_+/2}}\|  \partial^\alpha f^h \|_{L^2_{\gamma_+/2}}\\
&\quad + \int_{\R^3} Q ( \partial^{\alpha'} f^h,  \vv^{2q} \partial^{\alpha''} M)  \partial^\alpha f^h \dd v\\
&\lesssim \|  \partial^{\alpha'} f^h \|_{L^1_{\gamma+2s+2}} \|\vv^{2q} \partial^{\alpha''} M\|_{N^{s,\gamma}} \left( \| \partial^\alpha f^h \|_{L^2_{\gamma_+/2}} + \|  \partial^\alpha f^h \|_{N^{s,\gamma}}\right).
\end{split}\]
For the first factor on the right, we use the fact that $\|\cdot\|_{L^1_{\gamma+2s+2}} \lesssim \|\cdot\|_{L^2_q}$ for $q$ sufficiently large. The middle factor is bounded by some constant depending on $k$ and $q$. For the term $\|\partial^\alpha f^h\|_{N^{s,\gamma}}$, we apply Lemma \ref{l:moments}. We finally have
\[ \begin{split}
\int_{\R^3} \vv^{2q} Q( \partial^{\alpha'} f^h,  \partial^{\alpha''} M)   \partial^\alpha f^h \dd v &\lesssim \|\partial^{\alpha'} f^h\|_{L^2_q} \| \left( \|\vv^q \partial^\alpha f^h\|_{\dot N^{s,\gamma}} + \|\partial^\alpha f^h\|_{L^2_{q+(\gamma+2s-2)/2}}\right)\\
 &\leq C_{k,q} c_q^{-1}\left(\|\partial^{\alpha'} f^h\|_{(L^2_q)_v}^2 +\|\partial^\alpha f^h\|_{(L^2_q)_{v}}^2\right) + \frac{c_q} {4k}\|\vv^q \partial^\alpha f^h\|_{\dot N^{s,\gamma}}^2,
\end{split}\]
by Young's inequality, where we choose $c_q>0$ to match the constant from Lemma \ref{l:magic_negative_term_2}. Integrating in $x$ and summing over $\alpha'$, we find
\begin{equation}\label{e:easy-term}
\sum_{\alpha'+\alpha'' = \alpha} \iint_{\T^3\times\R^3}  \vv^{2q} Q( \partial^{\alpha'} f^h,  \partial^{\alpha''} M)   \partial^\alpha f^h \dd v  \leq C_{k,q} \|f^h\|_{(H^k_q)_{x,v}}^2 + \frac {c_q} 4\|\vv^q \partial^\alpha f^h\|_{L^2_x \dot N^{\gamma,s}_v}^2.
\end{equation}

In the first sum on the right in \eqref{e:energy}, consider first the case where all derivatives fall on $f^h$, i.e. $\alpha'=(0,0,0,0,0,0)$.  This case gives us a coercive negative term. From Lemma \ref{l:magic_negative_term_2}, followed by Corollary \ref{c:GS}, 
\begin{equation}\label{e:good-term-cq}
\begin{split}
 \int_{\R^3} Q(M+f^h,\partial^\alpha f^h) \langle v \rangle^{2q} \partial^\alpha f^h \dd v &\leq
-c_q \left( \iint_{\R^3\times\R^3} | \langle v' \rangle^q (\partial^\alpha f^h)'- \langle v \rangle^q  \partial^\alpha f^h|^2 K_{M+f^h} \dd v' \mathrm{d} v\right)\\
&\quad +  \int_{\R^3} (a_q - b_q \langle v \rangle^{\gamma_+}) \langle v \rangle^{2q} (\partial^\alpha f^h)^2 \dd v\\
&\leq -c_q \|\vv^q \partial^\alpha f^h\|_{\dot N^{s,\gamma}}^2 + a_q \|\partial^\alpha f^h\|_{L^2_q}^2 .
\end{split}
\end{equation}
Note that we could apply Corollary \ref{c:GS} because $M+f^h$ satisfies the inequalities \eqref{e:hydro} by Lemma \ref{l:hydro-near_equilibrium}.

Returning to the first sum on the right in \eqref{e:energy}, when $|\alpha'| \neq 0$, the commutator estimate Lemma \ref{l:commutator} and the trilinear estimate Proposition \ref{p:trilinear} give
\begin{equation*}
\begin{split}
\int_{\R^3}&\vv^{2q} Q( \partial^{\alpha'} (M+f^h), \partial^{\alpha''} f^h)   \partial^\alpha f^h \dd v\\
&\leq \| \vv^{2q} Q(\partial^{\alpha'} (M+f^h), \partial^{\alpha''} f^h) - Q(\partial^{\alpha'} (M+f^h), \vv^{2q} \partial^{\alpha''} f^h)\|_{L^2_{-\gamma_+}} \|\partial^\alpha f^h\|_{L^2_{\gamma_+}} \\
&\quad + \int_{\R^3}  Q(\partial^{\alpha'} (M+f^h),\vv^{2q}\partial^{\alpha''} f^h)   \partial^\alpha f^h \dd v\\
&\lesssim  \|\partial^{\alpha'} (M+f^h)\|_{L^1_{\gamma+2s+2}} \|\vv^{2q} \partial^{\alpha''}f^h \|_{N^{s,\gamma} }\left(\| \partial^\alpha f^h\|_{L^2_{\gamma_+/2}}+ \|  \partial^\alpha f^h\|_{N^{s,\gamma}}\right).
\end{split}
\end{equation*}
Using $\|\cdot\|_{N^{s,\gamma}} \lesssim \|\cdot\|_{\dot N^{s,\gamma}} + \|\cdot\|_{L^2_{(\gamma+2s)/2}}$ and Lemma \ref{l:moments}, this implies
\begin{equation}\label{e:alpha-beta}
\begin{split}
\int_{\R^3}&\vv^{2q} Q( \partial^{\alpha'} (M+f^h), \partial^{\alpha''} f^h)   \partial^\alpha f^h \dd v
\leq C_{k,q}\|\partial^{\alpha'} f^h\|_{L^2_q} \|\vv^{2q}\partial^{\alpha''} f^h\|_{N^{s,\gamma}} \left( \|\partial^\alpha f^h\|_{L^2_q} + \|\vv^q \partial^\alpha f^h\|_{\dot N^{s,\gamma}}\right).
\end{split}
\end{equation}
 Note that the largest $v$ moments have been paired with the lower-order derivative $\partial^{\alpha''} f^h$. 
The expression \eqref{e:alpha-beta} will be estimated differently depending on how the derivatives fall. The analysis differs between the base case and the inductive step.

\medskip

\noindent {\bf Base case:} Let $k=4$. In the middle factor on the right in \eqref{e:alpha-beta}, we apply the inequality 
\begin{equation}\label{e:Nsgamma-Hs}
\|h\|_{N^{s,\gamma}} \lesssim \|h\|_{H^s_{(\gamma+2s)/2}},
\end{equation} 
(see \cite[Equation (13)]{gressman-strain2011sharp}). 
The right side of \eqref{e:alpha-beta} is therefore bounded by a constant times
\[ \begin{split}
I_\alpha &:=  
\| \partial^{\alpha'}f^h\|_{L^2_q} \|\vv^{2q}\partial^{\alpha''} f^h\|_{ H^s_{(\gamma+2s)/2}} \left( \|\partial^\alpha f^h\|_{L^2_q} + \|\vv^q \partial^\alpha f^h\|_{\dot N^{s,\gamma}}\right), 
\end{split}\]

Let $j = |\alpha''|$. The case $j=4$ has been considered above in \eqref{e:good-term-cq}. We begin with the case where $2\leq j \leq 3$, so that $1\leq |\alpha'| \leq 2$.  Integrating $I_\alpha$ in $x$, applying H\"older's inequality, and using Sobolev embedding in $x$ in the first factor (note that the $x$ domain $\mathbb T^3$ is compact), we have
\begin{equation*}
\begin{split}
\int_{\T^3}I_\alpha \dd x &\lesssim \|D_x^2\partial^{\alpha'}f^h\|_{L^2_x (L^2_q)_v} \|\vv^{2q} \partial^{\alpha''} f^h\|_{L^2_x H^s_{(\gamma+2s)/2}} \left(\| \partial^\alpha f^h\|_{(L^2_{q})_{x,v}}+ \| \vv^q \partial^\alpha f^h\|_{L^2_x\dot N^{s,\gamma}_v}\right).
\end{split}
\end{equation*}
For the middle factor in this right-hand side, we apply the interpolation Lemma \ref{l:moment_interpolation}:
\[  \|\vv^{2q} \partial^{\alpha''} f^h\|_{L^2_x H^s_{(\gamma+2s)/2}} \lesssim \| f^h\|_{H^{4}_{x,v}}^{(j+s)/4}\| \vv^m f^h\|_{L^2_{x,v}}^{(4-j-s)/4}, \]
for some $m>0$. The last factor $\| \vv^m f^h\|_{L^2_{x,v}}$ is uniformly bounded, by our $L^\infty_q$ bounds on $f^h$.  We now have
\begin{equation}\label{e:energy2}
\begin{split}
\int_{\T^3} I_\alpha  \dd x &\leq C_q \|D_x^2 \partial^{\alpha'} f^h\|_{(L^2_q)_{x,v}} \| f^h\|_{H^{4}_{x,v}}^{(j+s)/4} \left(\| \partial^\alpha f^h\|_{(L^2_{q})_{x,v}}+ \| \vv^q \partial^\alpha f^h\|_{L^2_x\dot N^{s,\gamma}_v}\right)\\
&\lesssim \| f^h\|_{(H^4_q)_{x,v}}^{(7+s)/4}\left(\| \partial^\alpha f^h\|_{(L^2_{q})_{x,v}}+ \|  \vv^q\partial^\alpha f^h\|_{L^2_x\dot N^{s,\gamma}_v}\right)\\
&\lesssim \|f^h\|_{(H^4_q)_{x,v}}^{(11+s)/4} + \|f^h\|_{(H^4_q)_{x,v}}^{(7+s)/4} \|\vv^q\partial^\alpha f^h\|_{L^2_x \dot N^{s,\gamma}_v}\\
&\leq C_q c_q^{-1}  \|f^h\|_{(H^4_q)_{x,v}}^{2(7+s)/4} + \frac {c_q} {4(4^2)} \|\vv^q\partial^\alpha f^h\|_{L^2_x \dot N^{s,\gamma}_v}^2. 
\end{split} \end{equation}
On the other hand, if $0\leq j \leq 1$, then $|\alpha'|\geq 3$, and we apply H\"older's inequality differently: 
\begin{equation*}
\begin{split}
\int_{\T^3} I_\alpha  \dd x &\lesssim \|\partial^{\alpha'}  f^h\|_{(L^2_q)_{x,v}} \|\vv^{2q}\partial^{\alpha''} f^h\|_{L^\infty_x N^{s,\gamma}_v} \left(\| \partial^\alpha f^h\|_{(L^2_{q})_{x,v}}+ \| \vv^q \partial^\alpha f^h\|_{L^2_x \dot N^{s,\gamma}_v}\right).
\end{split}
\end{equation*}
For the middle factor in this expression, we apply Sobolev embedding plus Lemma \ref{l:moment_interpolation}:
\[ \|\vv^{2q} \partial^{\alpha''} f^h\|_{L^\infty_x H^s_{(\gamma+2s)/2}}  \lesssim  \|\vv^{2q} \partial^{\alpha''} f^h\|_{H^2_x H^s_{(\gamma+2s)/2}}  \lesssim \|f^h\|_{H^4_{x,v}}^{(2+s)/4} \|f^h\|_{(L^2_m)_{x,v}}^{(2-s)/4}.\]
We then have
\begin{equation}\label{e:energy4}
\begin{split}
\int_{\T^3} I_\alpha  \dd x &\leq C_q \|\partial^{\alpha'}  f^h\|_{(L^2_q)_{x,v}} \|f^h\|_{H^4_{x,v}}^{(2+s)/4} \left(\| \partial^\alpha f^h\|_{(L^2_{q})_{x,v}}+ \| \vv^q \partial^\alpha f^h\|_{L^2_x\dot N^{s,\gamma}_v}\right)\\
&\lesssim \| f^h\|_{(H^4_q)_{x,v}}^{(10+s)/4} + \|f^h\|_{(H^4_q)_{x,v}}^{(6+s)/4} \|\vv^q\partial^\alpha f^h\|_{L^2_x\dot N^{s,\gamma}_v}\\
&\leq  C_q c_q^{-1}\|f^h\|_{(H^4_q)_{x,v}}^{2(6+s)/4} + \frac {c_q} {4(4^2)} \|\vv^q\partial^\alpha f^h\|_{L^2_x \dot N^{s,\gamma}}^2.
\end{split}
\end{equation}

We sum over $\alpha'$ in \eqref{e:energy} and obtain, using \eqref{e:energy2}, \eqref{e:energy4}, \eqref{e:easy-term}, and \eqref{e:good-term-cq},
\begin{equation}
\begin{split}
\frac 1 2 \frac d {dt} \|\partial^\alpha f^h\|_{(L^2_q)_{x,v}}^2 &\leq  C_q \|\partial^\alpha f^h\|_{(L^2_q)_{x,v}}^2  - \frac {c_q}2 \|\vv^q \partial^\alpha f^h\|_{L^2_x \dot N^{s,\gamma}_v}^2
 + C \|f^h\|_{(H^4_{q})_{x,v}}^{p} + \frac {c_q} 4 \|\vv^q \partial^\alpha f^h\|_{L^2_x \dot N^{s,\gamma}_v}^2,
\end{split}
\end{equation}
for some $p>0$ depending only on $s$. Taking $q$ sufficiently large and summing over $\alpha$, we have
\begin{equation}\label{e:base-case}
 \frac 1 2 \frac d {dt} \|f^h\|_{(H^4_q)_{x,v}}^2 \leq C_q \|f^h\|_{(H^4_q)_{x,v}}^p - \frac {c_q} 4 \sum_{|\alpha|= 4} \|\vv^q \partial^\alpha f^h\|_{L^2_x \dot N^{s,\gamma}_v}^2, \quad t\in \mathcal D.
 \end{equation}
Let $T_1$ be the maximal time of existence of the ODE $\frac 1 2 y'(t) = C_{q} y^{p/2}(t)$ with $y(0) = \|f_0\|_{H^4_q(\T^3\times\R^3)}$. For $t\leq T_1$, integrate \eqref{e:base-case} from $t_i$ to $t_{i+1}-$ for every $i$ even such that $t_{i+1}\leq t$, and combine with the $k=4$ case of \eqref{e:i-odd} to obtain
\begin{equation}\label{e:H4-bound}
\|f^h(t)\|_{H^4_q(\T^3\times\R^3)}^2 \leq \|f_0\|_{H^4_q(\T^3\times\R^3)}^2 + \int_0^t C_q\|f^h(s)\|_{H^4_q(\T^3\times\R^3)}^p\dd s.   
\end{equation}
 Gr\"onwall's lemma implies that $\|f^h(t)\|_{H^4_q(\T^3\times\R^3)}$ is finite up to $T_1$, for $q>0$ sufficiently large. More precisely, we fix a $q_0>0$ sufficiently large, and find a time $T_1>0$ based on estimate \eqref{e:H4-bound} in the case $q=q_0$. This time depends on the norm of $f_0$ in $H^4_{q_0}$ and in $L^\infty_{q_1}$ for some $q_1$ sufficiently large depending on $q_0$. This establishes the first statement of the lemma.
 

If we estimate $H^4_q$ norms of $f^h$ with $q>q_0$ using the above strategy, the resulting time interval on which $\|f^h(t)\|_{H^4_q}$ is finite may depend on the value of $q$.  To get around this issue, for each $q>0$ we interpolate between $H^4$ and $L^\infty_m$, for $m>q$, to obtain a bound for $f^h$ in $H^3_q$. This third-order estimate depends only on $\|f_0\|_{L^\infty_m}$, and on the above bound on $\|f^h\|_{H^4}$, which is uniform on $[0,T_1]$. In particular, we have upper bounds for $f^h$ in $H^3_q$ that hold true on a time interval independent of $q$:
\begin{equation}\label{e:base1}
 \|f^h(t)\|_{H^3_q(\T^3\times\R^3)} \leq C_{3,q},  \quad t\in [0,T_1], q>0.
 \end{equation}
Furthermore, for $|\alpha|\leq 3$ and $q>0$, the estimate \eqref{e:Nsgamma-Hs} implies 
\[\|\vv^q \partial^\alpha f^h\|_{L^2_{t,x}([0,T_1]\times \T^3, N^{s,\gamma}_v)} \lesssim \|\vv^q \partial^\alpha f^h\|_{L^\infty_t L^2_x([0,T_1]\times\T^3, H^s_{(\gamma+2s)/2})},\]
 so by interpolation (Lemma \ref{l:moment_interpolation}) and our uniform bound for $f^h(t)$ in $H^4_{x,v}$, we have 
\begin{equation}\label{e:base2}
 \sum_{|\alpha|\leq 3} \|\vv^q \partial^\alpha f^h(t)\|_{L^2_{t,x} ([0,T_1]\times \T^3,N^{s,\gamma}_v)} \leq C_{3,q}, \quad q>0.
 \end{equation}
Note that this is the undotted $N^{s,\gamma}_v$ norm. We take \eqref{e:base1} and \eqref{e:base2} as the base case of our induction.

\medskip

\noindent {\bf Inductive step:}  For some $k\geq 4$, assume that for all $q>0$ and with $T_1>0$ the same time given in \eqref{e:base1} and \eqref{e:base2},
\begin{equation}\label{e:induction1}
 \|f^h(t)\|_{H^{k-1}_q(\T^3\times\R^3)} \leq C_{k,q}, \quad t\in [0,T_1],
 \end{equation}
and that
\begin{equation}\label{e:induction2}
\|F_{k-1,q}(t)\|_{L^2([0,T_1])} := \left\| \sum_{|\alpha| \leq k-1} \|\vv^q \partial^\alpha f^h(t)\|_{L^2_{x} N^{s,\gamma}_v}\one_{\mathcal D}(t)\right\|_{L^2([0,T_1])} \leq C_{k,q}.
 \end{equation}
 Note that the upper bounds may depend on $k$ and $q$, and also on $L^\infty_m$ norms of $f_0$, where $m>0$ is sufficiently large depending on $q$. However, the time interval $[0,T_1]$ is always the same for any $k\geq 4$ and $q>0$. 
 

As above, we let $\alpha\in \mathbb N^{3+3}$ have total order $k$, differentiate the equation for $f^h$ by $\partial^\alpha$, and integrate against $\partial^\alpha f^h$. If $t\in \mathcal T$, the estimate \eqref{e:transport-times} applies. If $t\in \mathcal D$, we arrive at the expression \eqref{e:energy} as above, and in light of \eqref{e:easy-term} and \eqref{e:good-term-cq}, it only remains to estimate the terms 
\[ \iint_{\T^3\times\R^3} \vv^{2q} Q(\partial^{\alpha'}(M+f^h),\partial^{\alpha''} f^h) \partial^\alpha f^h \dd v \dd x,\]
with $|\alpha'|\neq 0$. The analysis is similar to the base case, except that we use our estimates in $N^{s,\gamma}_v$ norms from the inductive hypothesis to obtain a smaller exponent in the final upper bound for $\frac d {dt} \|f^h\|_{H^k_q}$. 

We arrive at \eqref{e:alpha-beta} as above. If $2\leq |\alpha''|\leq k-1$, we apply H\"older's inequality and Sobolev embedding as follows:
\begin{equation}\label{e:energy6}
\begin{split}
\iint_{\T^3\times\R^3}&\vv^{2q} Q( \partial^{\alpha'} (M+f^h), \partial^{\alpha''} f^h)   \partial^\alpha f^h \dd v\dd x\\
&\leq C_{k,q}\|\partial^{\alpha'} f^h\|_{L^\infty_x(L^2_{q})_v} \|\vv^{2q} \partial^{\alpha''}f^h \|_{L^2_x N^{s,\gamma}_v }\left(\| \partial^\alpha f^h\|_{(L^2_{q})_{x,v}}+ \| \vv^q \partial^\alpha f^h\|_{L^2_x\dot N^{s,\gamma}_v}\right)\\
&\leq C_{k,q} c_0^{-1} \|f^h\|_{(H^k_q)_{x,v}}^2 F_{k-1,2q}^2(t) + \frac {c_0} {4k^2}\|\vv^q\partial^\alpha f^h\|_{L^2_x \dot N^{s,\gamma}_v}^2,
\end{split}
\end{equation}
where $F_{k-1,2q}(t)$ is defined in \eqref{e:induction2}. 

If $0\leq |\alpha''|\leq 1$, we have
\begin{equation*}
\begin{split}
\iint_{\T^3\times\R^3}&\vv^{2q} Q( \partial^{\alpha'} (M+f^h), \partial^{\alpha''} f^h)   \partial^\alpha f^h \dd v\dd x\\
&\leq C_{k,q}  \|\partial_x^{\alpha'}  f^h\|_{(L^2_q)_{x,v}} \|\vv^{2q}\partial^{\alpha''} f^h\|_{H^2_x N^{s,\gamma}_v} \left(\| \partial^\alpha f^h\|_{(L^2_{q})_{x,v}}+ \|\vv^q  \partial^\alpha f^h\|_{L^2_x\dot N^{s,\gamma}_v}\right)
\end{split}
\end{equation*}
For the middle factor on the right, since $|\alpha''| + 2 \leq 3\leq k-1$, and $\|h\|_{H^2_x N^{s,\gamma}_v} \lesssim \|h + |\nabla_x h| + |D_x^2h| \|_{L^2_x N^{s,\gamma}_v}$ (which follows from the fact that $N^{s,\gamma}$ is a norm in the $v$ variable only) we can also bound this norm with $F_{k-1,2q}(t)$. This leaves us with
\begin{equation}\label{e:energy7}
\begin{split}
\iint_{\T^3\times\R^3}\vv^{2q} &Q( \partial^{\alpha'} (M+f^h), \partial^{\alpha''} f^h)   \partial^\alpha f^h \dd v\dd x \\
&\leq C_{k,q} \|f^h\|_{(H^k_q)_{x,v}} F_{k-1,2q}(t) \left(\| \partial^\alpha f^h\|_{(L^2_{q})_{x,v}}+ \|  \vv^q\partial^\alpha f^h\|_{L^2_x\dot N^{s,\gamma}_v}\right)\\
&\leq C_{k,q} c_q^{-1} \|f^h\|_{(H^k_q)_{x,v}}^2 F_{k-1,2q}^2(t) + \frac {c_q}{4k^2} \|\vv^q \partial^\alpha f^h\|_{L^2_x \dot N^{s,\gamma}}^2.
\end{split}
\end{equation}
Summing over $\alpha'$ in \eqref{e:energy6} and \eqref{e:energy7}, and adding \eqref{e:easy-term} and \eqref{e:good-term-cq} gives
\begin{equation}\label{e:Hk}
 \frac 1 2 \frac d {dt} \|f^h\|_{(H^k_q)_{x,v}}^2 \leq C_{k,q} \|f^h\|_{(H^k_q)_{x,v}}^2 F_{k-1,2q}^2(t) - \frac {c_q} 4 \sum_{|\alpha|= k} \|\vv^q \partial^\alpha f^h\|_{L^2_x \dot N^{s,\gamma}_v}^2, \quad t\in \mathcal D.
 \end{equation}
Combining with \eqref{e:transport-times} and proceeding as in the derivation of \eqref{e:H4-bound}, we conclude
\begin{equation*}
\|f^h(t)\|_{H^k_q(\T^3\times\R^3)}^2 \leq \|f_0\|_{H^k_q(\T^3\times\R^3)}^2 + \int_0^t C_{k,q}F_{k-1,2q}^2(s)\|f^h(s)\|_{H^k_q(\T^3\times\R^3)}^2 \dd s, \quad t\in [0,T_1].     
\end{equation*}
Our inductive hypothesis \eqref{e:induction2} ensures that $F_{k-1,2q}^2$ is integrable on $[0,T_1]$, so Gr\"onwall's lemma implies $\|f^h\|_{H^k(\T^3\times\R^3)}\leq \|f_0\|_{H^k(\T^3\times\R^3)} \exp\left(C_{k,q}\int_0^t F_{k-1,2q}^2(s) \dd s\right)$ for all $t\in [0,T_1]$, as desired. Estimate \eqref{e:Hk} also provides an upper bound on $\sum_{|\alpha|=k} \|\vv^q \partial^\alpha f^h(t)\one_{\mathcal D}(t)\|_{L^2_{t,x} \dot N^{s,\gamma}_v}^2$ that allows us to close the induction.
\end{proof}

\subsection{Convergence of approximate solutions}


Let us start by analyzing the limit $h \to 0$ when the initial data $f_0$ is in the Schwartz space. In that case, from Lemma \ref{l:sobolev-propagation}, we know that there exists a time $T_1>0$, depending on the norms $\|f_0\|_{H^4_q}$ and $\|f_0\|_{L^\infty_q}$ for some $q \in \mathbb N$, so that $f^h(t,\cdot,\cdot)$ is uniformly smooth and rapidly decaying for any $t \in [0,T_1]$. 

We claim that we can extract a subsequence of $f^h$ that converges uniformly over every compact subset of $[0,T_1] \times \T^3 \times \R^3$. The functions $f^h$ are uniformly smooth in $x$ and $v$, but they have jump discontinuities with respect to time at the $t_i$ with $i$ odd (see \eqref{e:mollification}). These jump discontinuities, however, become negligible as $h \to 0$ so that we can apply the Arzela-Ascoli theorem anyway. Indeed, let us consider $0 < \tau_1 < \tau_2 < T_1$. We see that for any $x,v \in \T^3 \times \R^3$
\begin{align*}
f^h(\tau_2,x,v) - f^h(\tau_1,x,v) &= \int_{[\tau_1,\tau_2] \cap \mathcal D} 2Q(M+f^h,M+f^h) \dd t + \int_{[\tau_1,\tau_2] \cap \mathcal T} -2v \cdot \nabla_x f^h \dd t \\
 &+ \sum_{\substack{t_i \in [\tau_1,\tau_2],\\ \text{$i$ odd}}} \left[ \chi_h \ast f^h(t_i-,x,v) - f^h(t_i-,x,v)\right].
\end{align*}

Since $f^h$ is uniformly smooth in $x$ and $v$, we see that $Q(M+f^h,M+f^h)$ and $v \cdot \nabla_x f^h$ are uniformly bounded on compact sets. (In particular, the boundedness of the collision term follows from  Corollary \ref{c:Qsbound} and Lemma \ref{l:convolution-C0}.) Therefore, the two integral terms are bounded by $\lesssim \tau_2-\tau_1$. Moreover, a standard calculation using evenness of $\chi_h$ shows $|\chi_h \ast f^h(t_i-,x,v) - f^h(t_i-,x,v)| \leq h^2 \|D_x^2 f^h\|_{L^\infty}$. Taking into account that there are at most $(\tau_2 - \tau_1)/(2h) + 1$ points $t_i$ with $i$ odd inside $[\tau_1,\tau_2]$, the summation term is bounded by $\lesssim h(\tau_2 - \tau_1) + h^2$, resulting in
\[ |f^h(\tau_2,x,v) - f^h(\tau_1,x,v)| \leq C\left( (1+h)|\tau_1-\tau_2| + h^2 \right). \]
We get a Lipchitz modulus of continuity in time, with a correction term $\lesssim h^2 \to 0$ as $h \to 0$. This is enough to apply Arzela-Ascoli and get that $f^h$ converges uniformly on every compact set to some Lipchitz (in $t$) function $f$ as $h \to 0$ after extracting a subsequence.

For every value of $t \in [0,T_1]$, we have that $f^h(t,\cdot,\cdot)$ is uniformly smooth, uniformly decaying, and converges uniformly over compact sets to $f(t,\cdot, \cdot)$. Therefore, we can upgrade this convergence to convergence in the Schwartz space $f^h(t,\cdot,\cdot) \to f(t,\cdot,\cdot)$ in $\mathcal S$, for every $t \in [0,T_1]$. Moreover, $f$ is smooth with respect to $x$ and $v$ and rapidly decaying as $|v| \to \infty$.

In order to verify that $f : [0,T_1] \times \T^3 \times \R^3 \to \R$ is a solution of \eqref{e:boltzmann-perturbative}, we write the equation in the weak sense and pass to the limit. Indeed, for any $h>0$ and any smooth test function $\varphi = \varphi(t,x,v)$ with compact support in $(0,T_1)\times \T^3\times \R^3$, let $a,b\in (0,T_1)$ be such that $\supp \varphi \subset [a,b]\times\T^3\times\R^3$, and write
\begin{align*} 0 &= \iiint_{[0,T_1] \times \T^d \times \R^d}\left[ -f^h \partial_t \varphi - 2\one_{t \in \mathcal T} f^h v\cdot \nabla_x \varphi  - 2\one_{t \in \mathcal D} Q(M+f^h,M+f^h) \varphi \right]\dd v \dd x \dd t \\
&+ \sum_{\substack{t_i \in [a,b]\\ \text{$i$ odd}}} \iint_{\T^3 \times \R^3} [\chi_h \ast f^h(t_i-,x,v) - f^h(t_i-,x,v)] \varphi(t_i,x,v)  \dd v \dd x\\
& \to \iiint_{[0,T_1] \times \T^3 \times \R^3}\left[ -f \partial_t \varphi - f v\cdot \nabla_x \varphi  - Q(M+f,M+f) \varphi \right]\dd v \dd x \dd t,
\end{align*}
as $h\to 0$, where the collision term converges by the convergence of $f^h\to f$ in $C^m([a,b]\times\T^3\times\R^3)$. (This follows, for example, from Corollary \ref{c:Qsbound} or Lemma \ref{l:q-upper-bound}.)  The second term converges to zero by our earlier estimate $|\chi_h \ast f^h(t_i-,x,v) - f^h(t_i-,x,v)| \leq h^2 \|D_x^2 f^h\|_{L^\infty}$. 


We have constructed a function $f: [0,T_1] \times \T^3 \times \R^3 \to \R$, continuous in all variables, $C^\infty$ smooth in $x$ and $v$, and rapidly decaying as $|v| \to \infty$, and solving \eqref{e:boltzmann-perturbative} in the sense of distributions. By a standard argument, the distributional formulation and continuity in $t$ imply $f$ is differentiable in time, and is therefore a pointwise solution to \eqref{e:boltzmann-perturbative}. 
Differentiating the equation in $t$ arbitrarily many times, we deduce that $f$ is $C^\infty$ smooth in all variables.

We would want to extend this solution for a longer interval of time if possible. We set $f(T_1,\cdot,\cdot)$ as the initial condition and repeat the same construction. We extend the function $f$ to an interval of time $[0,T_2]$ with $T_2-T_1$ depending on the norms $\|f(T_1,\cdot,\cdot)\|_{H^4}$ and $\|f(T_1,\cdot,\cdot)\|_{L^\infty_m}$ for $m \in \mathbb N$. However, according to Theorem \ref{t:reg}, all these norms will be bounded for as long as \eqref{e:hydro} holds. Moreover, from Lemma \ref{l:hydro-near_equilibrium}, we see that it is sufficient to ensure the upper bounds $|f| \leq \langle v \rangle^{-q}/2$. Finally, from Lemma \ref{l:q-upper-bound}, the function $f$ can be extended to an interval of time $[0,T]$ provided only that $\delta e^{C_1 T} \leq 1/2$.

\medskip

\textbf{Rough initial data:} In the general case, $f_0$ is only bounded and measurable, with decay $|f_0(x,v)|\leq \delta \vv^{-q}$.  Let us approximate $f_0$ with a sequence $f^k_0$ of smooth functions with rapid decay as $|v| \to \infty$ (for example by truncation and mollification). For each initial data $f_0^k$, we have a corresponding smooth solution $f^k: [0,T] \times \T^3 \times \R^3 \to \R$. Note that the value of $T$ here is fixed along the sequence, as well as the upper bound $|f^k(t,x,v)| \leq \eps \langle v \rangle^{-q_0}$, since both depend only on the inequality $|f_0(x,v)|\leq \delta \vv^{-q}$.

If $\gamma > 0$,  we also have uniform-in-$k$ regularity and decay estimates for every $f^k$ in $[\tau,T] \times \T^3 \times \R^3$, given by the application of Theorem \ref{t:reg} to $M+f^k$.  Thus, by Arzela-Ascoli, some subsequence of $f^k$ converges in $(C^m\cap L^\infty_q)([\tau,T] \times \T^3 \times \R^3)$, for every $m,q \in \mathbb N$ and every $\tau\in (0,T)$. The limit function $f$ is therefore smooth in $(0,T] \times \T^3 \times \R^3$ and satisfies the inequalities \eqref{e:reg-estimates} stated in Theorem \ref{t:reg}.

If $\gamma \leq 0$, we have the same regularity and decay estimates for $f^k$, from Theorem \ref{t:reg}, but the estimates are only uniform in $k$ up to some level determined by $q$ (the decay exponent of the initial data).  Applying Arzela-Ascoli and passing to the limit as in the previous paragraph,  the limit function $f$ lies in $(C^m\cap L^\infty_q)([\tau,T]\times \T^3\times\R^3)$ for all $\tau \in (0,T)$ and for some $m$ that can be made as large as desired by choosing $q$ large enough.


Now, let $\varphi(t,x,v)$ be a smooth test function with compact support in $[0,T)\times\T^3\times\R^3$. For every $k$,
%
\begin{equation}\label{e:f-k}
\begin{split}
 \iint_{\T^3\times\R^3} f_0^k(x,v) \varphi(0,x,v) \dd x \dd v 
 = \iiint_{[0,T]\times\T^3\times\R^3} \left[ f^k(\partial_t + v\cdot \nabla_x )\varphi + \varphi Q(M+f^k,M+f^k) \right] \dd v \dd x \dd t.
 \end{split}
 \end{equation}
We want to pass to the limit as $k \to \infty$ on both sides of \eqref{e:f-k}. The left hand side is immediate: since $f_0^k \to f_0$ in $L^1_{loc}$, the left-hand side of \eqref{e:f-k} converges to $\iint f_0(x,v) \varphi(0,x,v)\dd x \dd v$.

For any given $t>0$, we have that $f^k(t,\cdot,\cdot) \to f(t,\cdot,\cdot)$ in $(C^m\cap L^\infty_q) (\T^3 \times \R^3)$, for every $m,q \in \mathbb N$. As in the previous subsection, this implies the convergence of the collision term, and we have
\begin{equation}\label{e:fixed-t}
\begin{aligned} \iint_{\T^3\times\R^3} &\left[ f^k(t,x,v) (\partial_t + v\cdot \nabla_x )\varphi + \varphi Q(M+f^k(t,x,v),M+f^k(t,x,v)) \right] \dd v \dd x \\
&\to \iint_{\T^3\times\R^3} \left[ f(t,x,v) (\partial_t + v\cdot \nabla_x )\varphi + \varphi Q(M+f(t,x,v),M+f(t,x,v)) \right] \dd v \dd x.
\end{aligned}
\end{equation}
Since the functions $f^k$ are bounded in $L^\infty_q$ uniformly in $k$, Lemma \ref{l:phiQgh} gives us an upper bound that lets us apply the Dominated Convergence Theorem to integrate \eqref{e:fixed-t} in $t$. Thus, we pass to the limit on both sides of \eqref{e:f-k} and conclude
\begin{equation*}
\begin{split}
 \iint_{\T^3\times\R^3} f_0(x,v) \varphi(0,x,v) \dd x \dd v 
 = \iiint_{[0,T]\times\T^3\times\R^3} \left[ f(\partial_t + v\cdot \nabla_x )\varphi + \varphi Q(M+f,M+f) \right] \dd v \dd x \dd t.
 \end{split}
 \end{equation*}
This is the weak form of the equation \eqref{e:boltzmann-perturbative} with initial data $f_0$, as described in Section \ref{s:weak_solutions}. Here $f$ is $C^\infty$ for positive times if $\gamma >0$,  and $C^m$ for positive times if $\gamma \leq 0$, where $m$ depends on $q$, the decay rate of the initial data.  In either case,  $f$ is a classical solution.

%
%

\section{Existence of global solutions}

\begin{proof}[Proof of Theorem \ref{t:main}]
First, using Proposition \ref{p:short_time}, choose $q>0$ large enough and $\delta_1>0$ small enough so that $\|\langle v \rangle^q f_0\|_{L^\infty(\T^3 \times \R^3)}<\delta_1$ implies the existence of a solution $f$ at least up to time $1$, with initial data $f_0$ and $\|\langle v \rangle^q f(t,\cdot,\cdot)\|_{L^\infty}< \eps_0$ for $t\in [0,1]$. 

If the conclusion of the theorem is false, there is a first time $t_0>1$ where the conclusion fails, i.e. 
\begin{equation}\label{e:contact}
\|\langle v \rangle^q f(t_0,\cdot,\cdot) \|_{L^\infty(\T^3 \times \R^3)} = \eps_0.
\end{equation}

 We claim the time $t_0$ is bounded above, independently of $f_0$. Indeed, since $\|\langle v \rangle^q f(t,\cdot,\cdot)\|_{L^\infty} \leq \eps_0$ on $[0,t_0]$, the function $M+f$ satisfies the hydrodynamic bounds \eqref{e:hydro} by Lemma \ref{l:hydro-near_equilibrium}. Theorem \ref{t:reg} therefore implies $f$ is $C^\infty$ in $(t,x,v)$ and decays faster than any polynomial in $v$, with estimates in $H^k_q(\T^3\times \R^3)$ that are uniform for $t\in [1,t_0]$. By \cite{imbert-mouhot-silvestre-lowerbound2020}, the bounds \eqref{e:hydro} also imply $M+f$ is uniformly bounded below by a Maxwellian $K_0 e^{-A_0|v|^2}$ in $[1,t_0]\times \T^3\times \R^3$, where $K_0$ and $A_0$ depend only on the constants in \eqref{e:hydro}. These facts allow us to apply Theorem \ref{t:global-attractor} to $M+f$. With the choice $p=1$, this gives
\[ \| \vv^q f(t_0,\cdot,\cdot)\|_{L^\infty(\T^3\times \R^3)} \leq C_1 t_0^{-1},\]
where $C_1$ depends only on $\gamma$, $s$, and the constants in \eqref{e:hydro}. This is a contradiction with \eqref{e:contact} if $t_0 > C_1/\eps_0$. 

Next, we apply Proposition \ref{p:short_time} again, with $T = C_1/\eps_0 + 1$, to obtain a $\delta_2>0$ such that if $\|\langle v \rangle^q f_0 \| < \delta_2$, the corresponding solution $f$ exists up to time $T$ with $|f| < \eps_0\vv^{-q}$. Take $\eps_1 = \min\{\delta_1,\delta_2\}$. We have ruled out $t_0>T-1$ and $t_0<T$, so a crossing as in \eqref{e:contact} cannot occur.

We have shown $|f(t,x,v)| < \eps_0\vv^{-q}$ everywhere, which also implies the hydrodynamic bounds \eqref{e:hydro} never degenerate. Combined with the short-time existence result Proposition \ref{p:short_time}, this implies $f$ is a global solution on $[0,\infty)\times\T^3\times\R^3$.
\end{proof}

\begin{proof}[Proof of Theorem \ref{t:main2}] The proof of Theorem \ref{t:main2} follows the same lines as the proof of Theorem \ref{t:main}. The only difference is that the conditional regularity result of Theorem \ref{t:reg} depends on the upper bounds $N_q$ on the initial data, for $q \in \mathbb N$. Thus, we have the same condition on $f_0$ in the statement of Theorem \ref{t:main2}.
\end{proof}

\bibliographystyle{abbrv}
\bibliography{near_maxwellian2}

\end{document}